\documentclass[11pt,twoside,reqno,centertags,english]{amsart}
\usepackage[oldstylenums]{kpfonts}
\usepackage{amssymb}
\usepackage{amsmath}
\usepackage{mathtools}
\usepackage{amsthm}
\usepackage{bm}
 \usepackage{graphicx}
\usepackage{subcaption}
\usepackage[margin=2.5cm]{geometry}
\usepackage{color}
\usepackage{tikz}
\usepackage{enumerate}
\usepackage[english]{babel}
\usepackage{xcolor}
\usepackage{comment}
\newtheorem{thm}{Theorem}[section]
\newtheorem{cor}[thm]{Corollary}
\newtheorem{lem}[thm]{Lemma}
\newtheorem{prop}[thm]{Proposition}

\theoremstyle{remark}
\newtheorem{rem}[thm]{Remark}
\theoremstyle{definition}
\newtheorem{defn}[thm]{Definition}
\makeatletter
\renewcommand*\env@matrix[1][\arraystretch]{%
  \edef\arraystretch{#1}%
  \hskip -\arraycolsep
  \let\@ifnextchar\new@ifnextchar
  \array{*\c@MaxMatrixCols c}}
  \makeatother
\delimitershortfall=-1pt       
\DeclareMathOperator{\sgn}{sgn}

\def \rm{\mathbb{R}}
\def \rgg{\mathbf{g}}
\def \d{\partial}
\def \ds{\d_{s}}
\def \dss{\d_{ss}}
\def \dt{\d_{t}}
\def \e{\eta^1}
\def \ee{\eta^2}
\def \eee{\eta^3}
\def \ei{\eta^{i}}
\def \ii{\int_{0}^{1}}
\def \kki{\psi^{i}}
\def \k{\psi}
\def \si{\sigma^{i}}

\def \su{\sum_{i=1}^3}
\def \rm{\mathbb{R}}
\def \ep{\epsilon}
\def \epp{_{\ep}}
\def \eiee{\eta_{\epsilon}^{i}}

\def \io{\int_{\Omega}}
\def \it{\int_{\mathfrak Q_{T}}}

\def \ki{\psi^{i}}

\def \itt{\int_{\mathfrak Q_{\mathfrak t}}}

\def \l{\left(}
\def \r{\right)}
\def \iii{^{i}}
\def \tt{\mathfrak t}
\def \ai{\alpha^{i}\l s\r}
\date{}

\title[Falling Network]{Overdamped dynamics of a falling inextensible network: existence of solutions}

\author[A.~Telciyan]{Ayk Telciyan}
\address[A.~Telciyan]{University of Coimbra, CMUC, Department of Mathematics,  3001-501 Coimbra, Portugal}{}
\email{ayktelciyan@mat.uc.pt}
\author[D.~Vorotnikov]{Dmitry Vorotnikov}
\address[D.~Vorotnikov]{University of Coimbra, CMUC, Department of Mathematics,  3001-501 Coimbra, Portugal}{}
\email{mitvorot@mat.uc.pt}

\begin{document}

	\begin{abstract}  We study the equations of overdamped motion of an inextensible triod with three fixed ends and a free junction under the action of gravity. The problem can be expressed as a system of PDE that involves unknown Lagrange multipliers and non-standard boundary conditions related to the freely moving junction. 
		It can also be formally interpreted as a gradient flow of the potential energy on a certain submanifold of the Otto-Wasserstein space of probability measures. We prove global existence of generalized solutions to this problem.
	\end{abstract}
	\maketitle
	
	Keywords: gradient flow, triod, curvature, inextensible string, unknown Lagrange multiplier
	\vspace{10pt}
	
	\textbf{MSC [2020]: 35K65, 35R35, 35A01, 35R02, 58E99}
	\section{Introduction} \label{s:in}
	
An \emph{inextensible network} is a union of several inextensible strings that meet at some of their endpoints called \emph{junctions}. The study of inextensible networks from the mathematical perspective was started a long time ago by Chebyshev \cite{Tcheb} and Rivlin \cite{rivlin}, aiming at modelling textile fabrics. Aside from \cite{Nov20}, we are however not aware of any investigation of evolutionary behavior of inextensible networks. Our paper thus seems to be one of the first contributions to this particular field. On the other hand, there has been a major recent activity on well-posedness of geometric flows describing time-evolving extensible networks, see \cite{MN04,MN16,Gar19,Gar20,Aq19,Aq21,KrNov21} and the survey \cite{man21}. Whereas the authors of \cite{MN04,MN16,KrNov21} deal with variants of the mean curvature flow for networks, \cite{Nov20} and the other mentioned articles consider \emph{elastic flows} (interpolations between the mean curvature flow and the Willmore flow). 

The main technical difficulties that appear in the study of networks in contrast with the evolution of single strings are due to the rather non-standard boundary conditions at the junction points. Accordingly, to fix the ideas, we decided to restrict ourselves to the simplest possible network with only one junction, the so-called \emph{triod}, cf. \cite{Aq19,MN04}. Our triod consists of three inextensible strings (\emph{arms} of the triod) that meet at a common point (junction), and the remaining ends are fixed at three distinct points of $\mathbb R^d$, $d\in \mathbb N$, $d>1$ (the physically relevant cases are $d=3$ and to a lesser extent $d=2$). Note that the junction is moving in an unknown way and thus constitutes a kind of a \emph{free boundary}. The evolving triod is allowed to self-intersect, and the subtle issue whether the embeddedness of the network is preserved (this might be particularly challenging for $d=2$) lies beyond the scope of the paper.  

The literature on flows of networks cited above is concerned with variational evolution driven by ``intrinsic'' energies (related to the length or curvature). In this paper we investigate the gradient flow of an ``extrinsic'' energy, namely, the potential energy determined by an external force (gravity), with respect to a suitable geometry, cf. \cite{shidim1}. As we explain below, this models the overdamped motion of a falling inextensible network (triod). 

It is impossible to discuss evolution of inextensible networks without referring to the state of the art for single inextensible cords (we will not touch upon the extensible cords because the amount of the corresponding literature is enormous). Various elastic flows of inextensible strings were studied in \cite{wen93,Koi96,Oel11,Oel14,Ok07,O08,Schwet,OW20}. The presence of elastic forces contributes towards non-degenerate parabolicity of the flows and helps to overcome the difficulties caused by the Lagrange multipliers related to the inextensibility constraint; in our situation such forces and hence such an advantage are missing.  Our paper has particularly been influenced by \cite{shidim1} (that studied the overdamped dynamics of a falling whip) and \cite{shidim2} (that dealt with the ``uniformly compressing''\footnote{The uniformly compressing mean curvature flow is, roughly speaking,  the closest flow to the classical mean curvature flow among the flows that uniformly contract the volume of the evolving manifold, see \cite{shidim2} for the rigorous definition. In the case of evolving loops, after a suitable change of variables it becomes quite similar to the overdamped flow of \cite{shidim1}.} counterpart of the mean curvature flow).  

The full dynamical equations (Hamiltonian systems) governing the motion of inextensible strin\-gs (with or without elastic forces) are very tricky. The literature about the solvability of the corresponding initial-boundary value problems is scarce and includes the studies near the equilibrium \cite{Reeken-I}, locally in time \cite{Preston-2011}, and globally in time in various severely relaxed senses \cite{grothaus,JDE17}.

The equations of motion of an inextensible triod in the ambient space $\mathbb R^d$ subject to gravity force can be derived from the least action principle by following the road map from \cite[Section 2.6]{JDE17}. Assuming for simplicity that the length of each arm of the triod is equal to $1$, the  initial-boundary value problem reads

	\begin{equation}\label{triodeq}
	\begin{cases}
		\d_{tt}\eta^{i}=\d_{s}\left(\sigma^{i}\d_{s}\eta^{i}\right)+g,\\
		|\ds\eta^i|=1,\
	\end{cases} 
\end{equation} 
subject to 
the boundary conditions
\begin{equation}\label{boundary}
	\begin{cases}
		\e\l t,0\r=\ee\l t,0\r=\eee\l t,0\r,\\
		\eta^{i}\l t,1\r=\alpha\iii\l 1\r,\\
		\sigma^1\ds\e+\sigma^2\ds\ee+\sigma^3\ds\eee=0~\text{at}~ s=0 ~\text{for all}~t,
	\end{cases}
\end{equation}
and the initial conditions 
\begin{equation}\label{initial}
	\ei\l 0,s\r=\alpha\iii\l s\r,
\end{equation}
\begin{equation}\label{initial2}
	\partial_t\ei\l 0,s\r=\beta\iii\l s\r.
\end{equation}
Here $\eta^{i}=\eta^{i}(t,s)\in \mathbb R^d$, $i=1,2,3$, is the position vector at time $t\geq 0$ of the particle that is labelled by the arc length parameter $s$ and belongs the $i$th arm of the triod. For each $i$, the scalar function $\sigma^i=\sigma^i(t,s)$ is the Lagrange multiplier (that is often referred to as the \emph{tension}) coming from the inextensibility of the $i$th arm. Finally, $g$ is a constant gravity vector for which we assume w.l.o.g. that $|g|=1$, and $\alpha^i(s), \beta^i(s)$ determine the initial dynamical configuration of the triod. Note that $s=1$ corresponds to the fixed ends, and $s=0$ corresponds to the (moving) junction. 

From the geometrical point of view, a natural infinite-dimensional configuration manifold for the evolving triods is $$\mathcal{A}=\{\eta=(\eta^1,\eta^2,\eta^3):\ei\in H^2\left( 0,1;\rm^{d} \right),\ \e\l 0\r=\ee\l 0\r=\eee\l 0\r,\ \eta^{i}\l 1\r=\alpha\iii \l 1\r,\ |\d_{s}\ei(s)|=1\, \forall s\in[0,1] \}$$ viewed as a submanifold of $L^2(0,1; \rm^{3d})$ (and hence
 equipped with a weak Riemannian metric). Observe that the tangent space at a ``point'' $\eta$ is 
 \begin{equation}\label{tangentspace} T_\eta\mathcal{A}=\{v=(v^1,v^2,v^3):v^i\in H^2\left( 0,1;\rm^{d} \right),\ v^1 \l 0\r=v^2 \l 0\r=v^3 \l 0\r,\ v^{i}\l 1\r=0,\ \ds\ei(s)\cdot \ds v^i(s)=0 \}. \end{equation}
  Here and below the dot stands for the Euclidean product in $\mathbb R^d$. Note that we never employ Einstein's summation convention. Then \eqref{triodeq}, \eqref{boundary} is at least formally equivalent to  Newton's equation \begin{equation}\nabla_{\dot\eta}\dot\eta=-\nabla_{\mathcal{A}}E\left(\eta\right). \label{newton}
\end{equation}
Here \begin{equation}
E(\eta):=\su\ii-g\cdot\ei(s)\,ds
\end{equation} is the potential energy of a triod. 
 
 The Riemannian manifold $\mathcal A$ (as well as its counterparts for single strings, cf. \cite{Preston-2012,shidim1,shidim2}) has some interesting features. It can be viewed, cf. \cite{shidim2},  as a submanifold of the Otto-Wasserstein space of probability measures \cite{otto01,villani03topics,villani08oldnew} from the optimal transport theory (this in particular implies that the geodesic distance on $\mathcal A$ does not vanish, being bounded from below by the Wasserstein distance, which is in stark contrast with the underlying geometry of the mean curvature, Willmore and similar flows, cf. \cite{MM5, MM06,MM07,BHM, BBM14}). It can also be regarded as a particular case ($m=1$) of the manifolds of $m$-dimensional \emph{incompressible membranes} (in other words, of volume preserving immersions), cf. \cite{BMM16,Mol17}. The opposite borderline case $m=d$ tallies with Arnold's formalism \cite{Ar66,Ar98} for ideal incompressible fluids or rather, even more specifically, with  the motion of fluid patches in $\mathbb R^d$, which has recently been studied \cite{Sle19} from a similar perspective. However, in Arnold's case ($m=d$) the manifold has a Lie group structure, which allows one to work in the corresponding Lie algebra (i.e., in the mechanical language, to use the Eulerian coordinates). In our case $m=1$, there is no Lie algebra structure, and the Lagrangian description as in \eqref{triodeq}, \eqref{boundary}, \eqref{initial},  \eqref{initial2} seems to be unavoidable. 
 
 If the fall of the triod is overdamped by a heavily dense environment, the equations of motion \eqref{triodeq} become \begin{equation}\label{nwgrd}
 	\begin{cases}
 		\d_{t}\eta^{i}=\d_{s}\left(\sigma^{i}\d_{s}\eta^{i}\right)+g,\\
 		|\ds\eta^i|=1.\
 	\end{cases} 
 \end{equation} 
Note that the velocity $\d_{s}\left(\sigma^{i}\d_{s}\eta^{i}\right)+g$ acts not only in the direction normal to the curves constituting the network, but there is a tangential motion as well. 

We refer to \cite{shidim1} for the details of the derivation of \eqref{nwgrd} in the case of a single cord.  It is also possible to directly obtain the overdamped flow \eqref{nwgrd} from the full dynamical equation \eqref{triodeq}
by employing the quadratic change of time, cf. \cite{BD18}. Finally, our problem \eqref{nwgrd}, \eqref{boundary} can be realized as the gradient flow  
of the potential energy $E$ on the manifold $\mathcal A$, i.e.,  \begin{equation}\label{gradequality}
\dot\eta=-\nabla_{\mathcal{A}}E\left(\eta\right). 
\end{equation}

In light of the previous discussion (see also \cite{Preston-2011,Preston-2012,JDE17,thess}) equation \eqref{triodeq} has much in common with the Euler equation of ideal incompressible fluid. In the same spirit, the overdamped equation \eqref{nwgrd} is comparable to the Muskat problem (also known as the incompressible porous medium equation) that received a lot of attention during the last decade, see \cite{annmuskat,szemuskat,Muskatajm,Muskatinv} and the references therein.  

In this article, we are interested in constructing global in time solutions to \eqref{nwgrd}, \eqref{boundary}, \eqref{initial}. We deal with generalized solutions, which allows us to consider not necessarily smooth but merely rectifiable triods.

In what follows, we denote $\Omega:=(0,1)$, $\mathfrak Q_{t}:=(0,t)\times\Omega$ for $t\in(0,\infty]$ and $\rgg(s):=(g,g,g)\in L^2(\Omega;\mathbb{R}^{3d})$, $d\in \mathbb N$, $d>1$.

\begin{rem}[Initial data]\label{remofradius} We fix once and for all Lipschitz initial data $\alpha^{i}\in W^{1,\infty}\left(\Omega\right)^d$, $i=1,2,3$, satisfying the compatibility conditions \begin{equation}\label{compsimple}
		\alpha^{1}\l 0\r=\alpha^{2}\l 0\r=\alpha^{3}\l 0\r=0
	\end{equation}and \begin{equation}\label{alphan}|\ds\alpha^{i}\l s\r|=1\ \textrm{a.e.}\ \textrm{in}\ \Omega. \end{equation}  Since \eqref{alphan} is only required to hold almost everywhere, the arms of the triod can have shape of any rectifiable curve at the initial moment.  Note that we have also w.l.o.g. assumed that the junction is located at the origin at the initial moment. 
	We will moreover assume that the arms of the triod are not fully straight at the initial moment which means that $|\alpha^{i}\l 1\r|<1$ (since the length of each arm is equal to $1$), $i=1,2,3$. 
\end{rem}

Our goal is to prove the following main result.  
\begin{thm}[Global existence of generalized solutions]\label{thm17}
	For every initial configuration $\alpha^{i}\l s\r\in W^{1,\infty}\l\Omega\r^d$, $i=1,2,3$, meeting the assumptions of Remark \ref{remofradius}, there exists a generalized solution to \eqref{nwgrd}, \eqref{boundary},  \eqref{initial} in $\mathfrak{Q}_{\infty}$. Moreover, those solutions satisfy $\sigma\iii\l t,s\r\geq 0 $ for almost every $\l t,s\r\in \mathfrak{Q}_{\infty}$.
\end{thm}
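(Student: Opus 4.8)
The plan is to obtain the desired generalized solution as the limit of a non-degenerate parabolic approximation, using the gradient-flow structure \eqref{gradequality} as the engine for the a priori estimates. Before regularizing, I would first make the tension explicit. Differentiating the inextensibility constraint $|\ds\ei|=1$ in $t$ and substituting the first line of \eqref{nwgrd} shows that, for each frozen time $t$, the multipliers solve the linear second-order system $\dss\si=|\dss\ei|^{2}\si$ on $\Omega$ (gravity drops out of the bulk because $g$ is constant). The clamped ends force the Neumann data $\ds\si(t,1)=-g\cdot\ds\ei(t,1)$, while the force-balance relation in \eqref{boundary} together with the matching of junction velocities closes the system at $s=0$. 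For a fixed admissible configuration $\eta$ this is a coercive elliptic problem, hence uniquely solvable; crucially, its zeroth-order coefficient $|\dss\ei|^{2}$ is nonnegative, so a maximum-principle argument applied to $-\dss\si+|\dss\ei|^{2}\si=0$ with the above boundary data yields $\si\geq 0$. This is exactly the mechanism behind the sign assertion $\si\geq 0$ in the statement, and it is what must be preserved through every approximation.

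Next I would regularize to restore uniform parabolicity, since the principal coefficient $\si$ may degenerate. The natural choice, following \cite{shidim1,shidim2}, is to add a small bending energy $\tfrac{\ep}{2}\su\ii|\dss\ei|^{2}\,ds$ to $E$ (equivalently an elastic term $\ep\,\d_{ssss}\ei$ in the flow) together with a mild relaxation of the pointwise constraint. For each fixed $\ep>0$ I would construct a solution $\eta_\ep$ of the regularized gradient flow by a spatial Galerkin method built on an adapted basis $\{v^{(k^i)}\}$ subordinate to the junction and clamping constraints: at each Galerkin level the coupled tension ODE is solved by a fixed-point argument, local-in-time existence follows from standard ODE theory, and the regularized energy inequality provides the global continuation.

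The a priori estimates then come directly from the gradient-flow identity $\tfrac{d}{dt}E_\ep(\eta_\ep)=-\|\dt\eta_\ep\|_{L^{2}(\Omega)^{3d}}^{2}$. The energy is bounded below because inextensibility, together with the fixed ends $\ei(\cdot,1)=\alpha^i(1)$, confines each arm to a fixed bounded region, so integrating in time gives a bound on $\dt\eta_\ep$ in $L^{2}(\mathfrak{Q}_{\infty})$ that is uniform in $\ep$; the (regularized) constraint controls $\ds\eiee$ in $L^{\infty}$, and the tension bounds with $\si\geq 0$ are inherited from the maximum-principle step. Aubin–Lions and Arzelà–Ascoli then extract a subsequence with $\eta_\ep\to\eta$ strongly enough to pass the constraint $|\ds\ei|=1$ and the junction and clamping conditions to the limit, and I would verify \eqref{nwgrd}, \eqref{boundary}, \eqref{initial} in the weak formulation defining a generalized solution.

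The main obstacle is the passage to the limit in the nonlinear flux $\ds(\si\ds\ei)$. Here $\si_\ep$ converges only weakly (as a measure, because of the potential degeneracy), while $\ds\eiee$ converges only weakly-$*$ in $L^{\infty}$, and the product of two merely weakly convergent sequences is not stable. The very notion of generalized solution is designed to absorb this difficulty, and I expect the decisive work to lie in closing this step—either by a compensated-compactness/div-curl argument exploiting the orthogonality $\ds\ei\cdot\ds\dt\ei=0$, or by upgrading the compactness of one factor through the tension equation itself—while simultaneously recovering the free-junction force balance in \eqref{boundary} in the limiting measure-theoretic sense.
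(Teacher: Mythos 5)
There is a genuine gap, and it sits exactly where you locate the ``decisive work'': your proposal never produces the uniform $L^\infty$ bound on the tension, and without it the weak--weak product $\sigma_\epsilon\,\partial_s\eta^i_\epsilon$ cannot be handled. Your proposed mechanism for the sign and size of $\sigma$ --- solving $\partial_{ss}\sigma^i=|\partial_{ss}\eta^i|^2\sigma^i$ at frozen time and invoking a maximum principle --- does not work here. At the clamped end the Neumann datum $\partial_s\sigma^i(t,1)=-g\cdot\partial_s\eta^i(t,1)$ has no sign, so a negative boundary minimum cannot be excluded by Hopf-type arguments; at the junction the three tensions are coupled through vector-valued conditions that are not of a form amenable to a scalar maximum principle; and the coefficient $|\partial_{ss}\eta^i|^2$ is not even integrable at the regularity level of the solutions constructed ($\eta^i$ merely $W^{1,\infty}$ in $s$, with rectifiable initial arms). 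In the paper the nonnegativity $\sigma^i_\epsilon\geq 0$ is instead \emph{structural}: since $G_\epsilon(\tau)=\gamma_\epsilon(|\tau|^2)\tau$ with $\gamma_\epsilon>0$, one has $\sigma^i_\epsilon=G_\epsilon(\partial_s\eta^i_\epsilon)\cdot\partial_s\eta^i_\epsilon\geq0$ pointwise, and this passes to the weak limit. More seriously, the uniform $L^\infty$ tension bound (Lemma \ref{lemmakappa}) is the heart of the paper and is proved by a geometric contradiction argument: blow-up of the tension forces the curvature of the affected arms to vanish in $L^2$, hence the arms straighten, which is then excluded either by the smallest-enclosing-ball lemma (Lemma \ref{lemmatre}) together with the standing hypothesis $|\alpha^i(1)|<1$ of Remark \ref{remofradius}, or by an angle argument at the junction using $\sigma_\epsilon\geq0$. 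Your proposal never invokes the non-straightness assumption $|\alpha^i(1)|<1$ at all, yet without it the statement's tension bound genuinely fails, so no argument omitting it can be complete.

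Two further points would derail the program as written. First, your regularization (adding $\tfrac{\epsilon}{2}\int|\partial_{ss}\eta^i|^2$ plus an unspecified ``mild relaxation'') discards the specific structure that the paper's limit passage relies on: the choice $F_\epsilon(\psi)=\epsilon\psi+\psi/\sqrt{\epsilon+|\psi|^2}$ yields the explicit identity
\begin{equation*}
|\psi^i_\epsilon|\,\bigl||\partial_s\eta^i_\epsilon|^2-1\bigr|\leq\bigl(|\partial_s\eta^i_\epsilon|+1\bigr)\bigl(\epsilon|\psi^i_\epsilon|^2+\sqrt{\epsilon}\bigr),
\end{equation*}
which, combined with the $L^\infty$ bound on $\psi_\epsilon$, gives $\|\sigma^i_\epsilon\partial_s\eta^i_\epsilon-\psi^i_\epsilon\|_{L^\infty}\to0$ and closes the product limit with no compensated compactness needed; a fourth-order bending term offers no analogue of this, and would additionally require extra junction/clamping boundary conditions and break the automatic sign of $\sigma_\epsilon$. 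Second, you state you would ``pass the constraint $|\partial_s\eta^i|=1$ to the limit'': this is not achievable under weak-$*$ convergence of $\partial_s\eta^i_\epsilon$ (the norm can drop), and indeed Definition \ref{def1} only demands the relaxed pair $|\partial_s\eta^i|\leq1$ and $\sigma^i\bigl(|\partial_s\eta^i|^2-1\bigr)=0$, the second of which is recovered precisely from the identity above, not from strong convergence of $\partial_s\eta_\epsilon$. Your energy estimate and the Aubin--Lions compactness step do match the paper (Lemma \ref{pp1}, Proposition \ref{ppmain}), but the proposal is missing the two load-bearing ideas --- the geometric $L^\infty$ tension bound and the structural product identity --- and replaces them with a maximum principle that is unavailable and a limit argument you yourself leave open.
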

Note that the precise definition of a generalized solution is lengthy and will be introduced in Definition \ref{def1}. 

Observe that $\mathcal{A}$, being a formal submanifold of the Otto-Wasserstein space, is a metric space with a non-degenerate (Riemannian) distance. Nevertheless, $\mathcal{A}$ is neither a complete metric space nor a geodesic space. Accordingly, the theory of gradient flows in metric spaces, cf. \cite{AGS08,villani08oldnew}, does not sound to be applicable to well-posedness of our flow \eqref{gradequality}.

To achieve our goal, we will follow the strategy suggested by Shi and the second author \cite{shidim1} for the evolution of a single string. It basically consists in approximation of the original gradient flow on $\mathcal A$ by suitable gradient flows on the ambient space $L^2(\Omega; \rm^{3d})$. The idea is to derive uniform  estimates for the approximating problem that would allow us to pass to the limit and to show that the limiting functions are solutions to \eqref{nwgrd}, \eqref{boundary},  \eqref{initial}. However, because of the complicated boundary conditions \eqref{boundary}, many of the  estimates that were used in \cite[Section 3]{shidim1} fail to be generalizable to our setting. This in particular applies to the crucial $L^\infty$ estimate \cite[Proposition 3.3]{shidim1} in the spirit of Ladyzhenskaya, Solonnikov and Uraltseva, cf. \cite{asu}. We will manage to overcome these difficulties and to prove novel and more refined estimates by leveraging the gradient flow structure of the approximating problem much more thoroughly than in \cite{shidim1}. This will be combined with careful observations involving geometric properties of triods, the behaviour of the curvature and some convexity argument (cf. Lemmas \ref{lemmatre} and \ref{lemmakappa} below).

Apart from that, in \cite{shidim1} the existence of $C^\infty$-smooth solutions to the approximating problem was immediate from Amann's theory, cf. \cite{Amann}. It is not applicable here anymore (again due to the boundary conditions), so we will solve our approximate problem (see our Corollary \ref{cor:sol}) by the theory of abstract evolution equations with pseudomonotone maps, cf. \cite{roubicek}.

The paper is organized as follows. In Section \ref{s:app}, we heuristically motivate and then introduce the approximating problem. In Section \ref{evolsec}, we prove its solvability. The main technical work is done in Section \ref{s:sec4} where we establish various uniform estimates for the approximating problem. A highlight of that section is the crucial and ingenious $L^\infty$ bound for the tension (Lemma \ref{lemmakappa}). In Section \ref{s:sec5}, based on the results of Section \ref{s:sec4}, we will be able to pass to the limit and to prove Theorem \ref{thm17}. Our results still hold for the overdamped dynamics of a falling single cord with two fixed ends, see Remark \ref{remst} and Proposition \ref{corst}. In Appendix we provide a formal computation of the gradient of the potential energy $\nabla_{\mathcal A} E$. 
	
	\section{Approximating problem} \label{s:app}

Let us now describe the way of approximation of our gradient flow that we plan to employ in order to prove Theorem \ref{thm17}. Here we follow the road map of \cite{shidim1}.
	
We begin with some heuristics. Consider the extra variables $\kki=\si\ds\ei$, $i=1,2,3$. Then our system \eqref{nwgrd} can at least formally be rewritten as 
	\begin{equation}\label{nts}
		\begin{dcases}
			\dt\ei=\ds\kki+g,\\
			\kki=\si\ds\ei,\\
			\si=\kki\cdot\ds\ei.
		\end{dcases}
	\end{equation}
 More precisely, the constraints $|\ds\ei|=1$ yield $|\kki|=|\si|$ and  $\kki=\sgn(\si)|\kki|\ds\ei$. We make the ansatz $\si\geq0$ (that will be a posteriori justified by Theorem \ref{thm17}) and infer $\kki\cdot\ds\ei=\si$ (see also Remark \ref{nonconv} below for a related discussion). Note that we  formally have $\ds\ei=\frac{\kki}{|\kki|}$, thus the map $\kki\mapsto \ds\ei$ is not a diffeomorphism. To overcome this issue, 
 we fix $\ep\in(0,1)$ and introduce the auxiliary functions \begin{equation}\label{dff}
 	F\epp:\rm^d\rightarrow\rm^d, ~ F\epp(\k):=\ep\k+\frac{\k}{\sqrt{\ep+|\k|^2}}
 \end{equation}
 and  $$G_{\epsilon}\left(\tau\right):=\left(F\epp\right)^{-1}(\tau).$$
 Approximating the relations $\kki\mapsto \ds\ei$ and $\ds\ei\mapsto \kki$ by $F\epp$ and $G\epp$, respectively, leads from \eqref{nwgrd}, \eqref{boundary},  \eqref{initial} to the problem
	\begin{equation}\label{appproblem}
		\dt\eiee=\ds\left(G\epp\left(\ds\eiee\right)\right)+g,\quad i=1,2,3,
	\end{equation} 
	with the following initial and boundary conditions:
	\begin{equation}\label{appib}
		\begin{aligned}
			\eiee\left(0,s\right)=\alpha^{i}\l s\r&,\\
			\eiee\l t,1\r=\alpha\iii\l 1\r&,\\
			\eta_{\epsilon}^{1}\l t,0\r=\eta_{\epsilon}^{2}\l t,0\r=\eta_{\epsilon}^{3}\l t,0\r&,\\
			\su G\epp\left(\ds\eiee\right)=0~\text{at}~s=0~\text{for all}~t&.
		\end{aligned}
	\end{equation}
	\begin{rem}\label{rem0}
		Let us make an elementary observation that is very important in the sequel. The Euclidean norm  $|F\epp\l\psi\r|$ depends only on $|\psi|$ and is an increasing function of $|\psi|$. If $|\psi|= 1$, then by simple calculation $|F\epp\l\psi\r|>1$. Consequently, if $|\tau|\leq 1$, then $|G\epp(\tau)|< 1$.
	\end{rem}
	
	By explicit computation, $\nabla G\epp$ is positive-definite and 
	\begin{equation}\label{lambdainequality}
		\lambda\epp\left(\tau\right)|\xi|^2\leq\nabla G\epp\left(\tau\right)\xi\cdot\xi\leq \Lambda\epp\l\tau\r|\xi|^2,~~\forall\xi\in\rm^d,\tau\in\rm^d,
	\end{equation}
	where $\Lambda\epp$ and $\lambda\epp$ satisfy
	\begin{equation}\label{lambdaineq}
		\begin{aligned}
			\frac{1}{\ep+\ep^{-1/2}}\leq \lambda\epp\left(\tau\right)=\frac{1}{\ep+\left(\ep+|G\epp(\tau)|^2\right)^{-1/2}},\\
			\Lambda\epp\left(\tau\right)=\frac{\ep^{-1}}{1+\left(\ep+|G\epp(\tau)|^2\right)^{-3/2}}\leq  \ep^{-1}.
		\end{aligned}
	\end{equation}
	
	Motivated by the original system \eqref{nts}, given a solution $\eta\epp$ to the approximating problem \eqref{appproblem}, \eqref{appib} we define
	\begin{equation}\label{defeq}
		\kki\epp:=G\epp\left(\ds\ei\epp\right),~~\si\epp:=G\epp\left(\ds\ei_{\ep}\right)\cdot\ds\ei\epp.
	\end{equation}
	
	Observe from the definition of $G\epp$ that there exists a bounded smooth positive scalar function $\gamma\epp$ such that $G\epp(\tau)=\gamma\epp(|\tau^2|)\tau$, $\tau\in \rm^d$. In particular, this implies that 
	\begin{equation}\label{sinon}
		\si\epp\geq 0.
	\end{equation}  Moreover, $\gamma\epp$ is bounded away from $0$ and $\infty$ (not uniformly w.r.t. $\ep$).  Let $\Gamma\epp$ be the primitive of $\gamma\epp$ with $\Gamma\epp(0)=0$.  Set $$Q\epp(\tau):=\frac 1 2  \Gamma\epp(|\tau|^2).$$ Observe that \begin{equation} \label{compgrad} \nabla Q\epp(\tau)=G\epp(\tau).\end{equation} Moreover, $Q\epp$ can be computed explicitly: \begin{equation}\label{gdef}
		Q\epp\left(\tau\right)=\epsilon\left(\frac{|G\epp(\tau)|^2}{2}-\frac{1}{\sqrt{\ep+|G\epp(\tau)|^2}}\right)+\sqrt \ep.
	\end{equation} By Remark \ref{rem0}, $Q(\tau)<<1$ if $|\tau|\leq 1$. 

	We define the associated ``total energy'' of the approximating problem \eqref{appproblem}, \eqref{appib} by
	\begin{equation}\label{approxenergy}
		\bm{\mathcal{E}}\epp\left(\eta\right):=\begin{dcases}
			\su\left(\ii Q\epp\left(\ds\ei\right)ds+\ii\left(-g\right)\cdot\ei ds\right)\\ \textrm{for}\   \eta\in AC^2(\Omega; \rm^{3d})\ \textrm{satisfying}\ \eta(1)=\alpha(1),\ \eta^{1}\l 0\r=\eta^{2}\l 0\r=\eta^{3}\l 0\r; \\ 
			+\infty \ \textrm{for}\ \textrm{any}\ \eta\in L^2(\Omega; \rm^{3d})\ \textrm{except}\ \textrm{those}\ \textrm{above}. 
		\end{dcases}
	\end{equation}

	Then \eqref{appproblem}, \eqref{appib} can at least formally be interpreted as a gradient flow, with respect to the flat Hilbertian structure of $L^2(\Omega; \rm^{3d})$, that is driven by this functional, i.e.$$\dot\eta=-\nabla_{L^2(\Omega; \rm^{3d})}\bm{\mathcal{E}}\epp\left(\eta\right), \quad \eta(0)=\alpha.$$ We will return to this issue in the next section.

\section{Evolution by pseudomonotone maps and solvability of the approximating problem} \label{evolsec}
For the existence of the solution to the approximating problem,  we use the theory of abstract evolution equations involving pseudomonotone maps. We prefer this approach  (instead of directly employing the theory of gradient flows in Hilbert spaces, cf. \cite{brezisbook,butatm}) because it automatically gives us the regularity of solution that is required for  the manipulations of Section \ref{s:sec4}. 

Let us start with introducing some concepts and definitions, mainly following the book \cite{roubicek}.  
Let $V$ be a separable reflexive Banach space, and $V^*$ be the dual space of $V$. We use the bracket notation for the duality. Assume that there is a
continuous embedding operator $i:V\to H$, and $i(V)$ is dense in
$H$, where $H$ is a Hilbert space. This generates the \emph{Gelfand triple} $V\subset H\subset V^{*}$ by the following well-known observation. The adjoint operator $i^*:H^*\to V^*$ is continuous and,
since $i(V)$ is dense in $H$, one-to-one. Since $i$ is one-to-one,
$i^*(H^*)$ is dense in $V^*$, and one may identify $H^*$ with a
dense subspace of $V^*$. Due to the Riesz representation theorem,
one may also identify $H$ with $H^*$.  Moreover, the $H$-scalar product of  $f\in H, u\in V$
coincides with the value of the functional $f$ from $V^*$ on the
element $u\in V$:
\begin{equation} \label{ttt} (f,u)_H=\langle f,u \rangle. \end{equation} \begin{defn}
	A mapping $A: V\rightarrow V^*$ is called monotone if  $ \forall u,v\in V$  we have $\langle A\l u\r-A\l v\r,u-v\rangle\geq 0 .$
\end{defn}
\begin{defn} A mapping $A: V\rightarrow V^*$ is called radially continuous if $ \forall u,v\in V: t\mapsto \langle A\l u+vt\r,v\rangle$ is continuous. 
\end{defn}

\begin{defn}
	A mapping $A: V\rightarrow V^*$ is called pseudomonotone provided\\
	 (i) $A$ is bounded (i.e., the image of any bounded set is bounded),
	 \\
	 (ii) for any sequence $u_{k}\rightharpoonup u$ weakly with $$\limsup_{k\rightarrow\infty}\langle A\l u_{k} \r,u_{k}-u \rangle\leq 0$$ and for every $v\in V$ it is true that  $$\langle A\l u\r,u-v \rangle\leq \liminf_{k\rightarrow\infty}\langle A\l u_{k}\r, u_{k}-v \rangle.$$
\end{defn} 
We will need the following useful criterion of pseudomonotonicity from \cite{brezis}.
\begin{lem}\label{brezis} A bounded,  radially continuous and monotone mapping  is pseudomonotone.
\end{lem}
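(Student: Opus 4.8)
The plan is to verify the two defining properties of pseudomonotonicity. Property (i), boundedness, is exactly one of the hypotheses, so the entire work lies in establishing property (ii). Throughout, fix a sequence $u_k \rightharpoonup u$ satisfying $\limsup_{k\to\infty}\langle A(u_k), u_k - u\rangle \leq 0$; note that weak convergence already forces $(u_k)$ to be bounded, whence the assumed boundedness of $A$ makes every scalar quantity appearing below a finite real number.

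First I would upgrade the $\limsup$ hypothesis to a genuine limit, namely $\langle A(u_k), u_k - u\rangle \to 0$. By monotonicity,
\[
\langle A(u_k), u_k - u\rangle \geq \langle A(u), u_k - u\rangle,
\]
and the right-hand side tends to $0$ because $A(u) \in V^*$ is fixed while $u_k - u \rightharpoonup 0$. Hence $\liminf_k \langle A(u_k), u_k - u\rangle \geq 0$, which together with the standing hypothesis yields the claimed limit. A convenient consequence is the algebraic splitting $\langle A(u_k), u_k - v\rangle = \langle A(u_k), u_k - u\rangle + \langle A(u_k), u - v\rangle$: since the first summand vanishes in the limit, proving (ii) reduces, for each fixed $v\in V$, to the single inequality $\liminf_k \langle A(u_k), u - v\rangle \geq \langle A(u), u - v\rangle$.

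The core of the argument is a Minty-type trick. For $t \in (0,1]$ set $u_t := u + t(v - u)$, so that $u_t \to u$ as $t \to 0^+$. Testing monotonicity on the pair $(u_k, u_t)$ gives $\langle A(u_k), u_k - u_t\rangle \geq \langle A(u_t), u_k - u_t\rangle$. Expanding $u_k - u_t = (u_k - u) + t(u - v)$ on the left and passing to the $\liminf$ in $k$, the term $\langle A(u_k), u_k - u\rangle$ drops out by the previous step, while on the right the weak convergence $u_k \rightharpoonup u$ turns $\langle A(u_t), u_k - u_t\rangle$ into $\langle A(u_t), u - u_t\rangle = t\langle A(u_t), u - v\rangle$. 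After dividing by $t > 0$ this leaves
\[
\liminf_{k\to\infty} \langle A(u_k), u - v\rangle \geq \langle A(u_t), u - v\rangle .
\]
Finally I would let $t \to 0^+$: radial continuity guarantees that $\langle A(u_t), u - v\rangle = -\langle A(u + t(v-u)), v - u\rangle \to -\langle A(u), v - u\rangle = \langle A(u), u - v\rangle$, which is precisely the reduced inequality, and hence (ii).

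I expect the delicate point to be the bookkeeping in the Minty step, that is, keeping careful track of which terms survive the $\liminf$ in $k$ as opposed to the subsequent limit $t \to 0^+$; in particular it is essential that $\langle A(u_k), u_k - u\rangle \to 0$ (and not merely that its $\limsup$ is nonpositive), so that this term disappears cleanly after the expansion. Boundedness of $A$ plays the quiet but indispensable role of ensuring that all the $\liminf$ expressions are finite, monotonicity is what permits replacing the uncontrolled operator values $A(u_k)$ by the tractable values $A(u)$ and $A(u_t)$, and radial continuity is invoked only at the very last step to recover $A(u)$ from the segment $u_t$.
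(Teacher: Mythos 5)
Your proof is correct: the upgrade of the $\limsup$ hypothesis to $\langle A(u_k),u_k-u\rangle\to 0$ via monotonicity, the reduction to $\liminf_k\langle A(u_k),u-v\rangle\geq\langle A(u),u-v\rangle$, and the Minty-type argument along the segment $u_t=u+t(v-u)$ with radial continuity at $t\to 0^+$ are all carried out accurately. The paper itself offers no proof of this lemma, merely citing Br\'ezis's 1968 criterion, and your argument is precisely the classical one behind that citation; the only cosmetic remark is that boundedness of $A$ is needed solely for property (i) of pseudomonotonicity, since in your Minty step finiteness of the relevant $\liminf$ already follows from the monotonicity bound $\liminf_k\langle A(u_k),u-v\rangle\geq\langle A(u_t),u-v\rangle$.
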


Assume that there is a seminorm $|\cdot|_V$ on $V$ that satisfies the ``abstract Poncar\'e inequality'' $$\|u\|_V\lesssim \|u\|_H+|u|_V, \quad \forall u\in V,$$ where $\|\cdot\|_{H}$  is the Eucledian norm in $H$.

\begin{defn} A mapping $A: V\rightarrow V^*$ is called semicoercive if for $u\in V$ we have $$\langle A\l u\r, u\rangle\geq c_{0}|u|^2_{V}-c_{1}|u|_{V}-c_{2}\|u\|^2_{H},$$ where $c_{0},c_{1}$ and $c_{2}$ are nonnegative constants.
\end{defn}

 Consider the following abstract initial value problem on the time interval $(0,T)$: \begin{equation}\label{bsystem}\frac{d}{dt}u+A\l u\l t\r\r=f(t),\ u\l 0\r=u_{0}.
\end{equation} The following result can be found in \cite[Theorem 8.18]{roubicek}. 
\begin{thm}
	\label{rubicekthm} Let $A:V\rightarrow V^*$ be a pseudomonotone and semicoercive mapping and \begin{itemize}
		\item[] $f\in AC^2\l [0,T],V^{*}\r$,
		\item[] $u_{0}\in V$ is such that  $A\l u_{0} \r-f(0)\in H$,
		\item[] $\langle A\l u_{1}\r -A\l u_{2}\r, u_{1}-u_{2} \rangle\geq c_{0}|u_{1}-u_{2}|^2_{V}-c_{2}\|u_{1}-u_{2}\|^2_{H}$ for $u_1, u_2\in V$ with some constants $c_{0}, c_2>0$. 
	\end{itemize}
	Then there exists $u\in W^{1,\infty}\l 0,T;H \r\cap AC^2\l [0,T]; V\r$ that solves the Cauchy problem \eqref{bsystem} (the first equality in \eqref{bsystem} holds in the space $V^*$ for a.a. in $(0,T)$, whereas the second one holds in the space $V$).
\end{thm}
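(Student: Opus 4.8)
The statement to be proved is Theorem~\ref{rubicekthm}, and since it is a general existence result for evolution equations governed by pseudomonotone maps, the plan is the classical Galerkin-plus-pseudomonotonicity scheme; I will only indicate the main steps and the place where the three hypotheses are actually spent. First I would set up the Galerkin approximation. Since $V$ is separable, fix a linearly independent total system $\{w_k\}_{k\geq 1}$ in $V$ with $w_1=u_0$, so that $u_0\in V_n:=\operatorname{span}\{w_1,\dots,w_n\}$ for every $n$, and look for $u_n(t)=\sum_{k=1}^n c^n_k(t)w_k$ solving
\[ (\dot u_n,w_j)_H+\langle A(u_n),w_j\rangle=\langle f,w_j\rangle,\qquad j=1,\dots,n,\quad u_n(0)=u_0. \]
A bounded pseudomonotone operator is demicontinuous, and on the finite-dimensional space $V_n$ demicontinuity reduces to ordinary continuity; hence the right-hand side of the induced ODE system for $(c^n_k)$ is continuous and the Peano/Carath\'eodory theorem yields a local solution, which the a~priori bounds below extend to all of $[0,T]$.

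Next I would derive the first energy estimate by testing with $u_n$ itself (multiply the $j$-th equation by $c^n_j$ and sum), which gives
\[ \tfrac12\tfrac{d}{dt}\|u_n\|_H^2+\langle A(u_n),u_n\rangle=\langle f,u_n\rangle. \]
Applying semicoercivity to the second term, the abstract Poincar\'e inequality to pass from $|u_n|_V$ to $\|u_n\|_V$, and Young's inequality, one obtains a differential inequality to which Gr\"onwall's lemma applies; this bounds $u_n$ in $L^\infty(0,T;H)\cap L^2(0,T;V)$ uniformly in $n$.

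The decisive estimate is on the time derivative, and I expect this to be the main obstacle. Writing $D_h w(t):=h^{-1}\bigl(w(t+h)-w(t)\bigr)$, I would subtract the Galerkin identity at time $t$ from the one at $t+h$, test with $D_h u_n$, and invoke the strong-monotonicity hypothesis $\langle A(u_1)-A(u_2),u_1-u_2\rangle\geq c_0|u_1-u_2|_V^2-c_2\|u_1-u_2\|_H^2$ to get
\[ \tfrac12\tfrac{d}{dt}\|D_h u_n\|_H^2+c_0\,|D_h u_n|_V^2\leq c_2\,\|D_h u_n\|_H^2+\langle D_h f,D_h u_n\rangle. \]
The crux is that the initial value is controlled uniformly: at $t=0$ the Galerkin identity together with the Gelfand identification \eqref{ttt} (legitimate because $A(u_0)-f(0)\in H$) shows that $\dot u_n(0)$ is the $H$-orthogonal projection of $f(0)-A(u_0)$ onto $V_n$, whence $\|\dot u_n(0)\|_H\leq\|f(0)-A(u_0)\|_H$ independently of $n$; this is exactly where the compatibility assumption is used. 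Gr\"onwall's lemma then bounds $\|D_h u_n(t)\|_H$ uniformly, the forcing contribution being absorbed through $\dot f\in L^2(0,T;V^*)$ coming from $f\in AC^2$; integrating the inequality in $t$ additionally bounds $D_h u_n$ in $L^2(0,T;V)$. Letting $h\to0$ produces the uniform bounds $\dot u_n$ in $L^\infty(0,T;H)\cap L^2(0,T;V)$, which are precisely the two regularities claimed in the conclusion.

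Finally I would pass to the limit. Extract a subsequence with $u_n\rightharpoonup u$ in $L^2(0,T;V)$, $\dot u_n\overset{*}{\rightharpoonup}\dot u$ in $L^\infty(0,T;H)$ and weakly in $L^2(0,T;V)$, and (using the boundedness of $A$ and the first estimate) $A(u_n)\rightharpoonup\chi$ in $L^2(0,T;V^*)$. Since no compact embedding $V\hookrightarrow H$ is available, the nonlinearity cannot be treated by strong convergence; instead I would exploit the pseudomonotone structure. From the energy identity
\[ \int_0^T\langle A(u_n),u_n\rangle\,dt=\int_0^T\langle f,u_n\rangle\,dt-\tfrac12\|u_n(T)\|_H^2+\tfrac12\|u_0\|_H^2 \]
and weak lower semicontinuity of the $H$-norm one shows $\limsup_n\int_0^T\langle A(u_n),u_n-u\rangle\,dt\leq 0$; the integrated form of pseudomonotonicity then identifies $\chi=A(u)$ and lets one pass to the limit in the Galerkin relations, so that $\dot u+A(u)=f$ holds in $V^*$ for a.e.\ $t$. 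The initial datum is recovered because $u_n$ is bounded in $H^1(0,T;V)\hookrightarrow C([0,T];V)$, on which evaluation at $t=0$ is weakly continuous, giving $u(0)=\lim_n u_n(0)=u_0$. The asserted regularity $u\in W^{1,\infty}(0,T;H)\cap AC^2([0,T];V)$ is exactly what the two a~priori estimates of the previous steps deliver.
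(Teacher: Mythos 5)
The paper does not prove this statement at all: Theorem~\ref{rubicekthm} is quoted from Roub\'{\i}\v{c}ek's book, \cite[Theorem 8.18]{roubicek}, so your proposal can only be compared with the proof given there. Roub\'{\i}\v{c}ek argues by Rothe's method (implicit time discretization): at each step one solves the stationary problem $u^k+\tau A(u^k)=u^{k-1}+\tau f^k$, whose solvability comes from the Br\'ezis-type surjectivity theory for pseudomonotone semicoercive maps, and the compatibility condition $A(u_0)-f(0)\in H$ is spent exactly once, to bound the first Rothe quotient $(u^1-u^0)/\tau$ in $H$; the strong-monotonicity hypothesis then propagates this bound to all later quotients, yielding the $W^{1,\infty}(0,T;H)\cap H^1(0,T;V)$ regularity in the limit $\tau\to 0$. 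Your Faedo--Galerkin scheme is the continuous-time analogue of the same strategy, and you spend the hypotheses in the same places: your observation that $\dot u_n(0)$ is the $H$-orthogonal projection of $f(0)-A(u_0)$ onto $V_n$ (legitimate via \eqref{ttt}) is precisely the Galerkin counterpart of Roub\'{\i}\v{c}ek's first-quotient estimate, and your difference-quotient Gr\"onwall argument mirrors his discrete telescoping. What each route buys: Rothe avoids ODE theory and any choice of basis and needs only the stationary pseudomonotone theory (which the paper has already set up via Lemma~\ref{brezis}), while Galerkin gives $C^1$-in-time approximants and a perhaps more familiar limit passage.

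Two points in your sketch would need care in a full write-up, though neither is fatal. First, Gr\"onwall controls $\|D_hu_n(t)\|_H$ in terms of $\|D_hu_n(0)\|_H$, which is an average of $\dot u_n$ over $[0,h]$, not $\|\dot u_n(0)\|_H$; to close the loop you must first send $h\to 0$ at fixed $n$, using that each $u_n$ is $C^1$ in time --- true because a bounded pseudomonotone map is demicontinuous, hence continuous on the finite-dimensional $V_n$ --- and only then let $n\to\infty$. Second, the identification $\chi=A(u)$ via ``the integrated form of pseudomonotonicity'' tacitly uses that the superposition operator induced by $A$ on $L^2(0,T;V)$ is pseudomonotone relative to functions with integrable time derivative; this is standard (it is the content of the lemmas preceding Theorem 8.18 in \cite{roubicek}) but not automatic. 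Alternatively, since your third hypothesis says precisely that $u\mapsto A(u)+c_2u$ is monotone through the Gelfand embedding, a classical Minty argument with the weight $e^{-2c_2t}$ identifies the limit more elementarily, bypassing the parabolic pseudomonotonicity machinery altogether.
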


Our next goal is to apply this theorem to show the existence and regularity of solutions to the approximating problem. It will be convenient to rewrite our approximating problem \eqref{appproblem}-\eqref{appib} with the help of the transformation $$\xi^{i} \l t,s \r:=\eta^{i}\epp\l t,s\r-\alpha^{i}\l s\r,$$ i.e., we simply subtract the initial data, arriving at
\begin{equation}\label{nwsystemwithxi}
	\begin{dcases}
		\dt \xi^{i}-\ds\left( G\epp\left(\ds\l\xi^{i}+\alpha^{i}\right)\r\r=g, \quad i=1,2,3,\\
		\xi^{1}\l t,0\r=\xi^{2}\l t,0\r=\xi^{3}\l t,0\r,\\
		\xi^{i}\l t,1\r=0,\\
		\xi^{i}\l 0,s\r=0,\\
		\su G\epp\left(\ds\l\xi^{i}+\alpha^{i}\r \right)(t,0)=0.
	\end{dcases}
\end{equation}
  In order to recast this system in the form of the Cauchy problem \eqref{bsystem}, we let $$H=L^2 \l \Omega;\rm^{3d}\r$$ be the Hilbert space of triples with the natural scalar product. Let also  $$V:=\{u=\{u^{i}\}\in  AC^2\l \overline\Omega;\rm^{3d}\r \text{ such that } u^{i}(1)=0 \text{ and } u^{1}\l 0\r=u^{2}\l 0\r=u^{3}\l 0\r\}$$ and $V^*$ be the corresponding dual space. 
   
  It is a separable reflexive Banach space with the norm inherited from $H^1$.    Define a seminorm on $V$ by $|\{u^i\}|_V:=\|\{\ds u^i\}\|_H$. The required Poincar\'e inequality obviously holds.  Let $\bm A: V\rightarrow V^*$ be the mapping that is defined by duality as follows: \begin{equation}\label{d:a}\langle\bm{A}\l\xi\r,\zeta \rangle=\su\int_{0}^{1}G\epp\l\ds\l\xi^{i}+\alpha^{i} \r\r\cdot\ds\zeta^i ds.\end{equation} 
Then \eqref{nwsystemwithxi} rewrites as 
\begin{equation}\label{nwsystemwithxiab}\frac{d}{dt}\xi+\bm A\l \xi\l t\r\r=\rgg,\ \xi\l 0\r=0.
\end{equation} Note that the last equality of \eqref{nwsystemwithxi} is hidden in the duality in \eqref{d:a}. 

In order to check that Theorem ~\ref{rubicekthm} is applicable to \eqref{nwsystemwithxiab} we need to prove several auxiliary statements.  For the sake of readability, we will omit the subscript $\ep$ coming from the approximating problem.  

\begin{lem}\label{mainineqaulity}
	The mapping $\bm{A}$ satisfies the inequality $$\langle \bm A\l \xi_{1}\r -\bm A\l \xi_{2}\r, \xi_{1}-\xi_{2} \rangle\geq c_0|\xi_{1}-\xi_{2}|^2_{V}$$ 
	for some constant $c_{0}>0$ (depending on $\epsilon$) and any $\xi_1, \xi_2 \in V$. 
\end{lem}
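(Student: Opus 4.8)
The plan is to reduce the asserted strong monotonicity of $\bm A$ to a pointwise strong monotonicity of the nonlinearity $G\epp$, and then to integrate; all the analytic content is already packaged in the uniform lower ellipticity bound for $\nabla G\epp$ recorded in \eqref{lambdainequality}--\eqref{lambdaineq}. First I would expand the pairing using the definition \eqref{d:a} and its bilinearity,
\begin{equation*}
\langle \bm A(\xi_1) - \bm A(\xi_2), \xi_1 - \xi_2 \rangle = \su \ii \l G\epp(\ds(\xi_1^i + \alpha^i)) - G\epp(\ds(\xi_2^i + \alpha^i)) \r \cdot \ds(\xi_1^i - \xi_2^i)\, ds.
\end{equation*}
The key algebraic observation is that the increment of the argument of $G\epp$ coincides with the test increment: setting $\tau_j^i := \ds(\xi_j^i + \alpha^i)$ for $j=1,2$, the common term $\ds\alpha^i$ cancels, so that $\tau_1^i - \tau_2^i = \ds(\xi_1^i - \xi_2^i)$. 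Hence each summand has, pointwise in $s$, the form $\l G\epp(\tau_1^i) - G\epp(\tau_2^i) \r \cdot (\tau_1^i - \tau_2^i)$.

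Next I would establish the pointwise inequality
\begin{equation*}
\l G\epp(\tau_1) - G\epp(\tau_2) \r \cdot (\tau_1 - \tau_2) \geq \frac{1}{\ep + \ep^{-1/2}}\, |\tau_1 - \tau_2|^2, \qquad \forall\, \tau_1, \tau_2 \in \rm^d.
\end{equation*}
By \eqref{compgrad} we have $G\epp = \nabla Q\epp$, so $\nabla G\epp$ is symmetric, and by \eqref{lambdainequality}--\eqref{lambdaineq} it is positive definite with the $\tau$-independent lower bound $\lambda\epp(\tau) \geq (\ep + \ep^{-1/2})^{-1}$. Writing the increment along the segment $\tau_\theta := \tau_2 + \theta(\tau_1 - \tau_2)$ by the fundamental theorem of calculus,
\begin{equation*}
G\epp(\tau_1) - G\epp(\tau_2) = \ii \nabla G\epp(\tau_\theta)\,(\tau_1 - \tau_2)\, d\theta,
\end{equation*}
and pairing with $\tau_1 - \tau_2$, the left-hand side becomes $\int_0^1 \nabla G\epp(\tau_\theta)(\tau_1 - \tau_2)\cdot(\tau_1 - \tau_2)\, d\theta$, which is bounded below by $(\ep + \ep^{-1/2})^{-1}|\tau_1 - \tau_2|^2$ via the lower bound in \eqref{lambdainequality}.

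Finally I would insert this pointwise estimate with $\tau_j = \tau_j^i(s)$ into each integral, sum over $i$, and recognize the resulting right-hand side as
\begin{equation*}
\frac{1}{\ep + \ep^{-1/2}}\su\ii |\ds(\xi_1^i - \xi_2^i)|^2\, ds = c_0\, |\xi_1 - \xi_2|^2_V,
\end{equation*}
with $c_0 := (\ep + \ep^{-1/2})^{-1} > 0$, in accordance with the definition $|\{u^i\}|_V = \|\{\ds u^i\}\|_H$. The only point demanding care — and the precise reason the constant $c_0$ does not depend on $\xi_1, \xi_2$ — is that the lower bound on $\lambda\epp$ in \eqref{lambdaineq} is uniform in $\tau$, which is exactly what lets the constant be pulled out of the $s$-integral. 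Beyond this bookkeeping there is no substantial obstacle, as the statement is entirely encapsulated in the (uniform) strong convexity of $Q\epp$, equivalently the uniform positive-definiteness of $\nabla G\epp$.
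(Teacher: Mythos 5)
Your proof is correct, and it reaches the same pointwise strong monotonicity of $G_\epsilon$ as the paper does, but by a genuinely different route. You integrate $\nabla G_\epsilon$ along the segment from $\tau_2$ to $\tau_1$ and invoke the $\tau$-uniform lower bound $\lambda_\epsilon(\tau)\geq(\epsilon+\epsilon^{-1/2})^{-1}$ from \eqref{lambdainequality}--\eqref{lambdaineq}; this is the standard argument that the gradient of a uniformly convex function is strongly monotone, and it yields $c_0=(\epsilon+\epsilon^{-1/2})^{-1}$ directly in the variable $\tau=\partial_s(\xi+\alpha)$. The paper instead passes to the inverse variable: setting $\mu=G_\epsilon\left(\partial_s(\xi_1+\alpha)\right)$ and $\gamma=G_\epsilon\left(\partial_s(\xi_2+\alpha)\right)$, it rewrites the pairing as $\int_\Omega(\mu-\gamma)\cdot\left(F_\epsilon(\mu)-F_\epsilon(\gamma)\right)ds$, exploits the explicit splitting \eqref{dff} of $F_\epsilon$ --- the linear part contributes $\epsilon|\mu-\gamma|^2$, while the radial part $\psi\mapsto\psi/\sqrt{\epsilon+|\psi|^2}$ is monotone as the gradient of a convex function --- and then converts back via the Lipschitz bound $|F_\epsilon(\mu)-F_\epsilon(\gamma)|\leq(\epsilon+\epsilon^{-1/2})|\mu-\gamma|$, ending with $c_0=\epsilon(\epsilon+\epsilon^{-1/2})^{-2}$. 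The trade-off: the paper's detour through $F_\epsilon$ needs only elementary convexity of the explicitly given map plus an operator-norm bound on $\nabla F_\epsilon$, so it never has to differentiate the implicitly defined inverse $G_\epsilon$; your argument takes the ellipticity statement \eqref{lambdainequality}--\eqref{lambdaineq} at face value (legitimate, since the paper asserts it by explicit computation, and $G_\epsilon$ is indeed $C^1$ because $F_\epsilon$ is a smooth diffeomorphism with $\nabla F_\epsilon\geq\epsilon\,\mathrm{Id}$), and in exchange is shorter and gives a sharper constant. Since $c_0$ is allowed to depend on $\epsilon$, either constant serves equally well in the verification of the hypotheses of Theorem \ref{rubicekthm}; your cancellation $\tau_1^i-\tau_2^i=\partial_s(\xi_1^i-\xi_2^i)$ and the identification of the right-hand side with $c_0|\xi_1-\xi_2|_V^2$ via the seminorm $|\{u^i\}|_V=\|\{\partial_s u^i\}\|_H$ are exactly as in the paper.
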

\begin{proof} Define $A^i: H^1\l \Omega;\rm^d\r\to \l H^1\l \Omega;\rm^d\r\r^*$ by \begin{equation*}\langle A^i\l\xi^i\r,\zeta \rangle=\int_{0}^{1}G\epp\l\ds\l\xi^{i}+\alpha^{i} \r\r\cdot\ds\zeta^i ds.\end{equation*} 
	Throughout the rest of the proof, we omit the index $i$ to avoid heavy notation in $A^i$, $\xi_1^i$, $\xi_2^i$ and $\alpha^{i}$. 
	With this convention, it suffices to prove that $$\langle A\l \xi_{1}\r -A\l \xi_{2}\r, \xi_{1}-\xi_{2} \rangle\geq c_0\|\ds(\xi_{1}-\xi_{2})\|^2_{L^2}.$$ We compute
	\begin{align} \label{computa}
		\langle A\l \xi_{1}\r- A\l \xi_{2}\r,\xi_{1}-\xi_{2}\rangle=\int_{\Omega} \left[G\l \ds\l \xi_{1}+\alpha\r\r-G\l \ds\l \xi_{2}+\alpha\r\r\right]\cdot\ds\l \l \xi_{1}+\alpha\r -\l \xi_{2}+\alpha \r \r ds.
	\end{align}
	Let us denote $\mu:=G\l \ds\l \xi_{1}+\alpha\r\r$ and $\gamma:=G\l \ds\l \xi_{2}+\alpha\r\r$. Now, we use the relation between $F$ and $G$ and conclude that $F\l \mu \r=\ds\l \xi_{1}+\alpha\r$ and $F\l \gamma \r=\ds\l \xi_{2}+\alpha\r$.  We can rewrite the right-hand side of \eqref{computa} as \begin{align*}
		&\int_{\Omega}( \mu-\gamma)\cdot \l F\l\mu\r-F\l\gamma \r\r ds\\ =&\int_{\Omega} \l \mu-\gamma \r \cdot \l \ep\l\mu-\gamma\r+\frac{\mu}{\sqrt{\ep+|\mu|^2}}-\frac{\gamma}{\sqrt{\ep+|\gamma|^2}} \r ds\\ \geq &\int_{\Omega }\ep|\mu-\gamma|^2 ds
	\end{align*} because the map $r\mapsto \frac{r}{\sqrt{\ep+r^2}}$ is a gradient of a convex function. 
	Observe that $$|F\l\mu\r-F\l \gamma\r|\leq (\ep+\ep^{-1/2})|\mu-\gamma|$$ by the mean value theorem and the Cauchy-Schwarz inequality since the operator norm of the matrix $\nabla F(r)$ is bounded from above by $\ep+\ep^{-1/2}$, cf. \eqref{lambdaineq}.
	
	Thus we conclude that $$\langle A\l \xi_{1}\r- A\l \xi_{2}\r,\xi_{1}-\xi_{2}\rangle\geq \ep\int_{\Omega}|\mu-\gamma|^2 ds\geq c_0 \int_{\Omega}|F(\mu)-F(\gamma)|^2 ds=c_0\|\ds(\xi_{1}-\xi_{2})\|^2_{L^2}.$$
	
\end{proof}
\begin{cor} 
	The mapping $\bm{A}$ is monotone.
\end{cor}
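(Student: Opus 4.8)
The plan is to read monotonicity straight off the strong estimate just established in Lemma \ref{mainineqaulity}. By definition, what must be verified is that
$$\langle \bm{A}\l\xi_1\r - \bm{A}\l\xi_2\r, \xi_1 - \xi_2 \rangle \geq 0$$
for every pair $\xi_1, \xi_2 \in V$. But Lemma \ref{mainineqaulity} already furnishes the sharper lower bound
$$\langle \bm{A}\l\xi_1\r - \bm{A}\l\xi_2\r, \xi_1 - \xi_2 \rangle \geq c_0 |\xi_1 - \xi_2|^2_V$$
with a constant $c_0 > 0$ (depending on $\ep$), so it remains only to discard the quantitative factor.

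Since $|\cdot|_V$ is a seminorm, the quantity $|\xi_1 - \xi_2|^2_V$ is nonnegative, and multiplying by the positive constant $c_0$ preserves nonnegativity. Hence the right-hand side of the displayed inequality is $\geq 0$, which is precisely the monotonicity condition. I expect no genuine obstacle in this step: all the analytic content — the convexity of the scalar map $r \mapsto \frac{r}{\sqrt{\ep + r^2}}$ and the uniform operator-norm bound $\|\nabla F_\ep\| \leq \ep + \ep^{-1/2}$ from \eqref{lambdaineq} — has already been expended in proving Lemma \ref{mainineqaulity}. The corollary is therefore an immediate specialization, obtained simply by relaxing the strict lower bound $c_0 |\xi_1 - \xi_2|^2_V$ to $0$.
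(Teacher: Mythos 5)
Your proposal is correct and matches the paper exactly: the paper's own proof of this corollary is the one-line remark that it is clear from Lemma \ref{mainineqaulity}, and your argument simply spells out that the lower bound $c_0|\xi_1-\xi_2|_V^2\geq 0$ yields monotonicity. Nothing is missing.
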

\begin{proof}
	It is clear from Lemma \ref{mainineqaulity}.
\end{proof}
\begin{cor} 
	The mapping $\bm{A}$ is semicoercive.
\end{cor}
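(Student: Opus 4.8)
The plan is to reduce the semicoercivity of $\bm A$ to the strong monotonicity estimate already established in Lemma \ref{mainineqaulity}, by comparing an arbitrary $\xi\in V$ with the zero element of $V$. Observe first that the constant function $0$ belongs to $V$, since it trivially satisfies the constraints $u^i(1)=0$ and $u^1(0)=u^2(0)=u^3(0)$ defining $V$. I would then apply Lemma \ref{mainineqaulity} with $\xi_1=\xi$ and $\xi_2=0$ to obtain
\begin{equation*}
\langle \bm A\l\xi\r-\bm A\l 0\r,\xi\rangle\geq c_0|\xi|_V^2,
\end{equation*}
which, after rearranging, yields the decomposition
\begin{equation*}
\langle \bm A\l\xi\r,\xi\rangle\geq c_0|\xi|_V^2+\langle \bm A\l 0\r,\xi\rangle .
\end{equation*}
It thus remains to control the linear term $\langle \bm A\l 0\r,\xi\rangle$ from below by a quantity of order $|\xi|_V$.

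For this term, the definition \eqref{d:a} gives
\begin{equation*}
\langle \bm A\l 0\r,\xi\rangle=\su\ii G\epp\l\ds\alpha^i\r\cdot\ds\xi^i\,ds .
\end{equation*}
The key point is that the vector fields $G\epp\l\ds\alpha^i\r$ are bounded: by the unit-speed condition \eqref{alphan} we have $|\ds\alpha^i|=1$ a.e. in $\Omega$, and Remark \ref{rem0} then ensures $|G\epp\l\ds\alpha^i\r|<1$ a.e., whence $\|G\epp\l\ds\alpha^i\r\|_{L^2}\leq 1$ (recall that $|\Omega|=1$). Applying the Cauchy--Schwarz inequality pointwise in $\rm^d$, then in $L^2(\Omega)$, and finally in the sum over $i$, I would estimate
\begin{equation*}
\left|\langle \bm A\l 0\r,\xi\rangle\right|\leq \su\|\ds\xi^i\|_{L^2}\leq \sqrt3\,|\xi|_V .
\end{equation*}

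Combining the two displays gives
\begin{equation*}
\langle \bm A\l\xi\r,\xi\rangle\geq c_0|\xi|_V^2-\sqrt3\,|\xi|_V ,
\end{equation*}
which is exactly the semicoercivity inequality with $c_1=\sqrt3$ and $c_2=0$. I do not expect any serious obstacle here: the argument is immediate once Lemma \ref{mainineqaulity} is in hand, and the only substantive input is the uniform bound on $G\epp\l\ds\alpha^i\r$, which rests on the fact that the initial configuration is parametrized by arc length together with the elementary monotonicity observation of Remark \ref{rem0}.
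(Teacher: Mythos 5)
Your proof is correct and follows essentially the same route as the paper: both split $\langle \bm A\l\xi\r,\xi\rangle=\langle \bm A\l\xi\r-\bm A\l 0\r,\xi\rangle+\langle \bm A\l 0\r,\xi\rangle$, apply Lemma \ref{mainineqaulity} with $\xi_2=0$, and bound the linear term by Cauchy--Schwarz. The only (harmless) difference is that you make the constant explicit ($c_1=\sqrt3$, uniform in $\ep$, via \eqref{alphan} and Remark \ref{rem0}), whereas the paper simply records it as a constant depending on $\alpha$.
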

\begin{proof}
	Employing Lemma \ref{mainineqaulity} and Cauchy-Schwarz inequality, we see that
	\begin{align*}
		\langle \bm A\l \xi\r, \xi\rangle&=\langle \bm A\l \xi\r-\bm A\l 0\r,\xi\rangle+\langle \bm A\l 0\r,\xi \rangle\\
		&\geq|\xi|^2_{V}+\langle \bm A\l 0\r,\xi\rangle\\
		&=|\xi|^2_{V}+\su\int_{0}^{1}G\l\ds\alpha^{i} \r\cdot\ds\xi^i ds\\
		&\geq |\xi|^2_{V}-\left\|\left\{G\l\ds\alpha^i \r\right\}\right\|_H|\xi|_{V}\\
		&\geq |\xi|^2_{V}-c_2|\xi|_{V},
	\end{align*}
	where $c_2$ is a positive constant depending on $\alpha$.
\end{proof}
\begin{lem}
	The mapping $\bm{A}$ is bounded.
\end{lem}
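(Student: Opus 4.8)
The plan is to prove that $\bm A$ maps bounded subsets of $V$ into bounded subsets of $V^*$ by estimating the dual pairing \eqref{d:a} directly. The one ingredient needed is a global linear growth bound on $G\epp$, and this is already encoded in the estimates \eqref{lambdainequality}--\eqref{lambdaineq}, so no new analytic input is required.

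First I would record that $G\epp$ is globally Lipschitz with constant at most $\ep^{-1}$. Indeed, by \eqref{compgrad} the Jacobian $\nabla G\epp$ is symmetric, and by \eqref{lambdainequality}--\eqref{lambdaineq} its eigenvalues are bounded above by $\Lambda\epp\le\ep^{-1}$; hence its operator norm does not exceed $\ep^{-1}$ at every point. Since $F\epp(0)=0$ by \eqref{dff}, we have $G\epp(0)=0$, and the mean value theorem gives the pointwise bound $|G\epp(\tau)|\le \ep^{-1}|\tau|$ for all $\tau\in\rm^d$. Then, for arbitrary $\xi,\zeta\in V$, starting from \eqref{d:a} and applying this bound together with the Cauchy--Schwarz inequality (first in the integral, then in the sum over $i$), I obtain
\begin{align*}
	|\langle \bm A\l\xi\r,\zeta\rangle|
	&\le \su\ii \left|G\epp\l\ds\l\xi\iii+\alpha\iii\r\r\right|\,|\ds\zeta\iii|\,ds \\
	&\le \ep^{-1}\su\ii |\ds\l\xi\iii+\alpha\iii\r|\,|\ds\zeta\iii|\,ds \\
	&\le \ep^{-1}\,\left\|\left\{\ds\l\xi\iii+\alpha\iii\r\right\}\right\|_{H}\,\left\|\left\{\ds\zeta\iii\right\}\right\|_{H}.
\end{align*}

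Finally I would clean up the two factors. Using $|\ds\alpha\iii|=1$ a.e. from \eqref{alphan}, one has $\|\{\ds\alpha\iii\}\|_{H}=\sqrt 3$, whence $\|\{\ds(\xi\iii+\alpha\iii)\}\|_{H}\le |\xi|_{V}+\sqrt 3$, while $\|\{\ds\zeta\iii\}\|_{H}=|\zeta|_{V}\le\|\zeta\|_{V}$. Combining these and taking the supremum over $\|\zeta\|_{V}\le 1$ yields
$$\|\bm A\l\xi\r\|_{V^*}\le \ep^{-1}\l|\xi|_{V}+\sqrt 3\r\le \ep^{-1}\l\|\xi\|_{V}+\sqrt 3\r,$$
which is uniformly bounded as $\xi$ ranges over any bounded set in $V$. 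As for difficulties, there is essentially no obstacle here: unlike the coercivity and monotonicity estimates, boundedness uses only the crude upper bound $\Lambda\epp\le\ep^{-1}$ and Cauchy--Schwarz, and the sole bookkeeping point is the inhomogeneous shift by $\alpha$, which contributes only the harmless additive constant $\|\{\ds\alpha\iii\}\|_{H}=\sqrt 3$.
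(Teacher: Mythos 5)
Your proposal is correct and takes essentially the same route as the paper: both estimates bound the pairing \eqref{d:a} by Cauchy--Schwarz in $H$ together with the linear growth of $G\epp$, absorbing the shift by $\alpha$ into a harmless additive constant. The only (cosmetic) difference is the justification of that growth: you derive the explicit Lipschitz bound $|G\epp(\tau)|\leq \ep^{-1}|\tau|$ from the symmetry of $\nabla G\epp$ (via \eqref{compgrad}) and $\Lambda\epp\leq\ep^{-1}$ with the mean value theorem, whereas the paper simply invokes sublinearity of $G\epp$, which it had already obtained from the representation $G\epp(\tau)=\gamma\epp(|\tau|^2)\tau$ with $\gamma\epp$ bounded.
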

\begin{proof} Indeed,
	\begin{align*}
		\langle \bm A\l\xi\r,\zeta \rangle&=\su\int_{0}^{1}G\l\ds\l\xi^{i}+\alpha^{i} \r\r\cdot\ds\zeta^i ds\\
		&\leq \left\|\left\{G\l\ds\l\xi^i+\alpha^i \r\r\right\}\right\|_{H}|\zeta|_{V}\\
		&\lesssim \left\|\left\{\ds\l\xi^i+\alpha^i \r\right\}\right\|_{H}|\zeta|_{V}\\
		&\leq|\xi+\alpha|_{V}\|\zeta\|_{V}.
	\end{align*}
(We have used sublinearity of $G$). Since $|\alpha|_V$ is finite, this implies that $\|\bm A(\xi)\|_{V^*}$ is bounded provided $\|\xi\|_V$ is bounded. 
\end{proof}
\begin{lem}
	The mapping $\bm{A}$ is radially continuous.
\end{lem}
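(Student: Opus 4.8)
The plan is to prove the slightly stronger statement that, for fixed $\xi,\zeta\in V$, the map $t\mapsto\langle\bm A(\xi+t\zeta),\zeta\rangle$ is Lipschitz in $t$; this of course implies radial continuity. The essential ingredient is already available: by \eqref{lambdainequality}--\eqref{lambdaineq} the operator norm of $\nabla G\epp$ is bounded above by $\Lambda\epp\leq\ep^{-1}$, so (for fixed $\ep$) the function $G\epp$ is globally Lipschitz on $\rm^d$ with constant $\ep^{-1}$, uniformly in its argument.

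With this in hand I would simply write, for $t,t_0\in\mathbb R$,
$$\langle\bm A(\xi+t\zeta),\zeta\rangle-\langle\bm A(\xi+t_0\zeta),\zeta\rangle=\su\int_0^1\left[G\epp\l\ds\l\xi^i+t\zeta^i+\alpha^i\r\r-G\epp\l\ds\l\xi^i+t_0\zeta^i+\alpha^i\r\r\right]\cdot\ds\zeta^i\,ds,$$
and then apply the Lipschitz bound pointwise in $s$, obtaining a.e.\ in $\Omega$
$$\left|G\epp\l\ds\l\xi^i+t\zeta^i+\alpha^i\r\r-G\epp\l\ds\l\xi^i+t_0\zeta^i+\alpha^i\r\r\right|\leq\ep^{-1}|t-t_0|\,|\ds\zeta^i|.$$
Inserting this into the integral, invoking the Cauchy--Schwarz inequality, and summing over $i$ yields
$$\left|\langle\bm A(\xi+t\zeta),\zeta\rangle-\langle\bm A(\xi+t_0\zeta),\zeta\rangle\right|\leq\ep^{-1}|t-t_0|\su\int_0^1|\ds\zeta^i|^2\,ds=\ep^{-1}|t-t_0|\,|\zeta|_V^2,$$
which is finite since $\zeta\in V$. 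Letting $t\to t_0$ gives the claimed (Lipschitz, hence radial) continuity.

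Since every step is either a structural bound on $G\epp$ that has already been recorded or an elementary use of Cauchy--Schwarz, I do not expect a genuine obstacle here; the only point deserving attention is to lean on the uniform Lipschitz bound $\Lambda\epp\leq\ep^{-1}$ rather than attempting a bare pointwise-convergence argument. For completeness, I note an alternative route that bypasses the Lipschitz bound: the sublinear growth $|G\epp(\tau)|\leq\ep^{-1}|\tau|$ supplies, for $t$ in a bounded neighbourhood $[t_0-M,t_0+M]$, the $L^1(\Omega)$ majorant $\ep^{-1}\l|\ds\xi^i|+M|\ds\zeta^i|+|\ds\alpha^i|\r|\ds\zeta^i|$, whence the continuity of $G\epp$ combined with the dominated convergence theorem delivers the same conclusion.
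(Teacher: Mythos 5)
Your argument is correct, and it takes a genuinely different route from the paper. The paper's proof is qualitative: it checks pointwise a.e.\ convergence of the integrand as $\tau_n\to\tau$ and then invokes Lebesgue's dominated convergence theorem, with the $L^1(\Omega)$ majorant $C\left(|\partial_s\xi|^2+|\partial_s\zeta|^2+|\partial_s\alpha|^2\right)$ supplied by the sublinearity of $G_\epsilon$ --- which is precisely the alternative you sketch in your final remark (note only that the constant there must absorb $\sup_n|\tau_n|$, harmless since the sequence converges). Your main argument is instead quantitative: from $\Lambda_\epsilon\leq\epsilon^{-1}$ in \eqref{lambdaineq} you extract a global Lipschitz bound for $G_\epsilon$ and conclude that $t\mapsto\langle\bm A(\xi+t\zeta),\zeta\rangle$ is Lipschitz with explicit constant $\epsilon^{-1}|\zeta|_V^2$, which is strictly more than radial continuity requires. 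One small point worth making explicit: passing from the quadratic-form bound $\nabla G_\epsilon(\tau)\xi\cdot\xi\leq\Lambda_\epsilon(\tau)|\xi|^2$ to an operator-norm bound uses the symmetry of $\nabla G_\epsilon(\tau)$, which holds because $G_\epsilon=\nabla Q_\epsilon$ by \eqref{compgrad}, so $\nabla G_\epsilon$ is a Hessian; this is the same device the paper itself employs for $\nabla F_\epsilon$ in the proof of Lemma \ref{mainineqaulity} (mean value theorem in integral form plus the eigenvalue bound). What each approach buys: the paper's DCT argument is softer and would survive if $G_\epsilon$ were merely continuous with sublinear growth, while yours exploits the specific $C^1$ structure of $G_\epsilon$ to get a uniform modulus of continuity in $t$ (indeed uniform over $\zeta$ in bounded sets), at the cost of an $\epsilon$-dependent constant --- which is immaterial here, since radial continuity is only needed for fixed $\epsilon$ to feed Lemma \ref{brezis}.
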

\begin{proof}
	Fix $\xi,\zeta\in V$ and let $\tau_{n}\rightarrow\tau$ be a sequence. Then it is easy to see that $$\su  G\l\ds\l\xi^{i}+\tau_n\zeta^i+\alpha^{i} \r\r(x)\cdot\ds\zeta^i(x)\to \su G\l\ds\l\xi^{i}+\tau\zeta^i+\alpha^{i} \r\r(x)\cdot\ds\zeta^i(x)$$ a.e. in $\Omega$.  The claim will follow from Lebesgue's dominated convergence theorem if there is a  function in $L^1(\Omega)$ that dominates the left-hand side. But it is indeed the case since we can leverage sublinearity of $G$ to estimate $$\left|\su  G\l\ds\l\xi^{i}+\tau_n\zeta^i+\alpha^{i} \r\r\cdot\ds\zeta^i\right|\\ \leq C |\ds\l\xi+\tau_n\zeta+\alpha\r| \cdot |\ds\zeta|\leq C ( |\ds\xi|^2+|\ds\zeta|^2+ |\ds\alpha|^2),$$ and the right-hand side is $L^1$ by the assumption.
\end{proof}
We can now legitimately use Theorem~\ref{rubicekthm} in order to solve \eqref{nwsystemwithxiab}. 
\begin{cor}
	Given $\alpha$ as in Remark \ref{remofradius}, the system \eqref{nwsystemwithxiab} has a solution $\xi=\{\xi^{i}\}\in  W^{1,\infty}\l 0,T;H \r\cap AC^2\l [0,T]; V\r$ that is understood in the same sense as in Theorem \ref{rubicekthm}.
\end{cor}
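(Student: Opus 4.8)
The plan is to invoke Theorem~\ref{rubicekthm} for the Cauchy problem \eqref{nwsystemwithxiab}, taking $V$ and $H$ as introduced above, operator $A=\bm A$, forcing $f=\rgg$ and initial datum $u_0=0$. Three of the four hypotheses are already at our disposal. Since $\bm A$ is bounded, radially continuous and monotone by the lemmas and corollaries proved just above, Lemma~\ref{brezis} shows that $\bm A$ is pseudomonotone; moreover it is semicoercive. The forcing $f=\rgg$ is constant in time and lies in $H\subset V^*$, so trivially $f\in AC^2\l[0,T];V^*\r$. Finally, the strengthened monotonicity required in the last bullet of Theorem~\ref{rubicekthm} is precisely Lemma~\ref{mainineqaulity}, which in fact holds with $c_2=0$.

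The remaining and genuinely delicate requirement is the compatibility condition $\bm A\l u_0\r-f\l 0\r\in H$, i.e.\ $\bm A\l 0\r\in H$. Unwinding \eqref{d:a}, this demands that the functional $\zeta\mapsto\su\ii G\epp\l\ds\alpha^i\r\cdot\ds\zeta^i\,ds$ be representable by an $L^2$ field. Integrating by parts and using that $\zeta^i\l 1\r=0$ and that $\zeta^1\l 0\r=\zeta^2\l 0\r=\zeta^3\l 0\r$, one sees that this is equivalent to asking that each $G\epp\l\ds\alpha^i\r$ have an $L^2$ weak derivative $\nabla G\epp\l\ds\alpha^i\r\dss\alpha^i$ and that the junction term $\l\su G\epp\l\ds\alpha^i\r\l 0\r\r\cdot\zeta^1\l 0\r$ vanish. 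For the merely Lipschitz data of Remark~\ref{remofradius} neither holds in general: $\dss\alpha^i$ is only a distribution, and the last relation in \eqref{boundary} is not imposed on $\alpha$ itself. This compatibility gap is the main obstacle.

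I would resolve it by first establishing the corollary for smooth, compatible data and then relaxing the regularity by approximation. Choose $\alpha_k=\{\alpha_k^i\}$ smooth, approximating $\alpha$ in $H^1\l\Omega;\rm^d\r$, respecting \eqref{compsimple} and \eqref{alphan}, and arranged so that $\su G\epp\l\ds\alpha_k^i\r\l 0\r=0$. For such data $\bm A\l 0\r\in H$, every hypothesis of Theorem~\ref{rubicekthm} is met, and one obtains solutions $\xi_k\in W^{1,\infty}\l 0,T;H\r\cap AC^2\l[0,T];V\r$. The dependence of $\bm A$ on $\alpha$ is Lipschitz, since $\nabla F\epp$ is bounded in terms of $\ep$ (cf.\ \eqref{lambdaineq}) and hence $G\epp$ is Lipschitz; subtracting the equations for two indices, testing with $\xi_k-\xi_l$, and invoking the coercivity of Lemma~\ref{mainineqaulity} gives a Gronwall bound of the form $\sup_{t\in[0,T]}\|\xi_k-\xi_l\|_H^2+\int_0^T|\xi_k-\xi_l|_V^2\,dt\lesssim\|\alpha_k-\alpha_l\|_{H^1}^2$. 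Thus $\{\xi_k\}$ is Cauchy in $C\l[0,T];H\r\cap L^2\l 0,T;V\r$, and its limit $\xi$ solves \eqref{nwsystemwithxiab} for the original $\alpha$ in the sense of Theorem~\ref{rubicekthm}. The delicate point in this last step is that the $W^{1,\infty}\l 0,T;H\r$ bound, controlled by $\|\bm A\l 0\r-\rgg\|_H$, need not be uniform in $k$ for Lipschitz $\alpha$; retaining the full regularity class asserted in the corollary therefore hinges on either keeping this quantity bounded along the approximation or exploiting the parabolic smoothing of the monotone flow for $t>0$.
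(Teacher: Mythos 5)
Your verification of the structural hypotheses is exactly the paper's proof: the corollary in the paper is justified solely by the preceding lemmas (pseudomonotonicity via Lemma \ref{brezis} from boundedness, radial continuity and monotonicity; semicoercivity; the strengthened monotonicity of Lemma \ref{mainineqaulity}, which indeed holds with $c_2=0$; and $f=\rgg$ constant, hence trivially in $AC^2([0,T];V^*)$), followed by a direct appeal to Theorem \ref{rubicekthm} --- with no approximation of the data and, notably, no verification of the compatibility condition $\bm A(0)-\rgg\in H$. You are right that this condition is nontrivial: testing \eqref{d:a} against $\zeta$ supported in a single arm forces $G_\ep(\ds\alpha^i)\in H^1(\Omega)$, i.e.\ essentially $\alpha^i\in H^2$, and testing against $\zeta$ with nonzero junction value forces $\sum_{i}G_\ep(\ds\alpha^i)(0)=0$; neither is guaranteed by Remark \ref{remofradius}. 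So you have put your finger on a point the paper passes over in silence. The paper's implicit resolution is visible in the notational remark opening Section \ref{s:sec4} (``$\alpha^{i}=\alpha^{i}_{\ep}$''): the data is meant to be regularized $\ep$-dependently \emph{before} the corollary is invoked, so that at each fixed $\ep$ the abstract theorem is applied to smooth, compatible data, and the original Lipschitz $\alpha$ is recovered only in the limit $\ep\to 0$ (Remark \ref{imnit}); this parallels the first half of your plan but locates the approximation at the $\ep$-level, where no uniformity in an inner parameter is needed.

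The genuine gap is in your repair, and you concede it yourself: the inner approximation $\alpha_k\to\alpha$ at fixed $\ep$ produces, via your (correct) Gronwall--Cauchy argument, a limit only in $C([0,T];H)\cap L^2(0,T;V)$ with $\dt\xi\in L^2(0,T;V^*)$, whereas the statement asserts $\xi\in W^{1,\infty}(0,T;H)\cap AC^2([0,T];V)$. This class is not decoration: the paper chose the pseudomonotone framework precisely ``because it automatically gives us the regularity of solution that is required for the manipulations of Section \ref{s:sec4}'' --- e.g.\ the integrations by parts in Lemma \ref{pp1} and the pointwise-in-time bounds of Lemma \ref{lemmadteta} live in exactly this class. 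Since $\|\dt\xi_k\|_{L^\infty(0,T;H)}$ is controlled by $\|\bm A_k(0)-\rgg\|_H$, which blows up as $\alpha_k\to\alpha$ in $H^1$ for incompatible Lipschitz data, your scheme cannot retain the asserted regularity without a genuinely new ingredient --- for instance the regularizing estimate $\|\dt^{+}\xi(t)\|_H\lesssim t^{-1/2}$ for the Hilbert-space gradient flow \eqref{gradflow} of the convex functional $\bene_\ep$ (the route the paper deliberately avoided), which would in any case only give the regularity away from $t=0$ and hence a weaker statement than the corollary claims, while the uniform energy identity of Lemma \ref{pp1} is used down to $t=0$. As written, your proposal therefore proves existence in a strictly weaker class than asserted; the clean fix, consistent with the paper's apparent intent, is to smooth $\alpha$ together with the $\ep$-approximation itself (preserving the endpoint values, $|\ds\alpha^i_\ep|\le 1$, and arranging $\sum_i G_\ep(\ds\alpha^i_\ep)(0)=0$) rather than to run a second approximation at fixed $\ep$.
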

Returning back to the variable $\eta$ and leveraging elementary properties of $G\epp$ and $\nabla G\epp$, we get the existence of approximate solutions. 
\begin{cor} \label{cor:sol}
	Given $\alpha$ as in Remark \ref{remofradius}, there exists a solution $\eta=\eta\epp$ to \eqref{appproblem}-\eqref{appib} in $\mathfrak Q_{T}$ that belongs to the following regularity class:
$$\ei\in W^{1,\infty}\l 0,T;L^2\left(\Omega\right) \r^d\cap AC^2\l [0,T]; AC^2\left(\overline \Omega\right)\r^d,$$
$$\ds \ei\in AC^2\l [0,T]; L^2\left(\Omega\right)\r^d,$$
$$\psi^i:=G\epp (\ds \ei)\in L^\infty\l 0,T; L^2\left(\Omega\right)\r^d,$$
$$\nabla G\epp (\ds \ei)\in L^\infty\l 0,T; L^\infty\left(\Omega\right)\r^d,$$
$$\dt\ei\in L^\infty\l 0,T;L^2\left(\Omega\right) \r^d\cap L^2\l 0,T; H^1\left(\Omega\right)\r^d,$$
$$\ds \psi^i=\ds \l G\epp\left(\ds\ei\right)\r\in L^\infty\l 0,T;L^2\left(\Omega\right) \r^d\cap L^2\l 0,T; H^1\left(\Omega\right)\r^d,$$
$$\dss \ei \in L^\infty\l 0,T;L^2\left(\Omega\right) \r^d.$$ \end{cor}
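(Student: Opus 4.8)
The plan is to push the regularity of the solution $\xi=\{\xi^i\}\in W^{1,\infty}(0,T;H)\cap AC^2([0,T];V)$ produced in the preceding corollary through the substitution $\eta^i:=\xi^i+\alpha^i$ and then to bootstrap using the explicit bounds \eqref{lambdainequality}--\eqref{lambdaineq} on $\nabla G\epp$ and the sublinearity of $G\epp$ from Remark \ref{rem0}. Since each $\alpha^i\in W^{1,\infty}(\Omega)^d$ is independent of $t$, the two components of the abstract regularity class transfer verbatim: $\xi\in W^{1,\infty}(0,T;H)$ gives $\eta^i\in W^{1,\infty}(0,T;L^2(\Omega))^d$, while $\xi\in AC^2([0,T];V)$ gives $\eta^i\in AC^2([0,T];AC^2(\overline\Omega))^d$ (as $V\subset AC^2(\overline\Omega;\mathbb R^{3d})$) and, because $\partial_s\colon V\to L^2$ is bounded and linear (hence commutes with $d/dt$), also $\partial_s\eta^i\in AC^2([0,T];L^2(\Omega))^d$. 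The membership $\frac{d}{dt}\xi\in L^2(0,T;V)$ together with $V\hookrightarrow H^1$ and $\frac{d}{dt}\xi\in L^\infty(0,T;H)$ yields $\partial_t\eta^i=\partial_t\xi^i\in L^\infty(0,T;L^2(\Omega))^d\cap L^2(0,T;H^1(\Omega))^d$. The boundary and initial conditions \eqref{appib} are inherited: the Dirichlet-type conditions and the junction matching are built into $V$ and the compatibility \eqref{compsimple}, and $\eta^i(0,\cdot)=\alpha^i$ follows from $\xi(0)=0$.

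Next I would read off the nonlinear quantities pointwise. The bound $|G\epp(\tau)|\le\epsilon^{-1}|\tau|$ (immediate from $|F\epp(\psi)|\ge\epsilon|\psi|$) together with $\partial_s\eta^i\in L^\infty(0,T;L^2)$ gives $\psi^i:=G\epp(\partial_s\eta^i)\in L^\infty(0,T;L^2(\Omega))^d$, and the operator bound $\Lambda\epp\le\epsilon^{-1}$ of \eqref{lambdaineq} gives $\nabla G\epp(\partial_s\eta^i)\in L^\infty(0,T;L^\infty(\Omega))^d$.

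It then remains to use the equation. Because $\frac{d}{dt}\xi\in L^\infty(0,T;H)$ and $\rgg\in H$, the relation $\bm A(\xi)=\rgg-\frac{d}{dt}\xi$ shows $\bm A(\xi(t))\in H$ for a.e.\ $t$; unwinding the definition \eqref{d:a} against test functions with compact support in $\Omega$ then forces $\partial_s\psi^i\in L^2(\Omega)$ for a.e.\ $t$, and against general $\zeta\in V$, after integration by parts, recovers the natural junction condition $\su G\epp(\partial_s\eta^i)(t,0)=0$ from \eqref{appib} via the vanishing of the boundary term $(\su\psi^i(t,0))\cdot\zeta(0)$. The resulting pointwise identity $\partial_s\psi^i=\partial_t\eta^i-g$ (which is precisely \eqref{nwsystemwithxiab} written back in $\eta$) upgrades $\partial_s\psi^i$ to $L^\infty(0,T;L^2)\cap L^2(0,T;H^1)$, since $g$ is constant. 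Finally, differentiating $\psi^i=G\epp(\partial_s\eta^i)$ in $s$ gives $\partial_s\psi^i=\nabla G\epp(\partial_s\eta^i)\,\partial_{ss}\eta^i$; the lower bound $\lambda\epp\ge(\epsilon+\epsilon^{-1/2})^{-1}>0$ of \eqref{lambdaineq} makes $\nabla G\epp$ invertible with bounded inverse, so $\partial_{ss}\eta^i=(\nabla G\epp(\partial_s\eta^i))^{-1}\partial_s\psi^i\in L^\infty(0,T;L^2(\Omega))^d$.

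The main obstacle is not any single estimate but the passage from the abstract, $V^*$-valued equation to the pointwise PDE together with the free-junction condition: the key is the observation that $\bm A(\xi)\in H$ for a.e.\ $t$, which simultaneously delivers the spatial regularity $\partial_s\psi^i\in L^2(\Omega)$ needed to integrate by parts and the vanishing of the junction flux; and, for the $L^2(0,T;H^1)$ time-regularity of $\partial_t\eta^i$, one must genuinely use the $AC^2([0,T];V)$ (not merely $W^{1,\infty}(0,T;H)$) membership. All constants here depend on the fixed $\epsilon$ through \eqref{lambdainequality}--\eqref{lambdaineq}; this is harmless, the $\epsilon$-uniform estimates being the business of Section \ref{s:sec4}.
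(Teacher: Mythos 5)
Your proposal is correct and takes essentially the same route as the paper: the paper's own proof of Corollary \ref{cor:sol} is the one-line ``return to the variable $\eta$ and leverage elementary properties of $G_{\epsilon}$ and $\nabla G_{\epsilon}$'', and your argument is precisely the detailed version of that (transfer of regularity through $\eta^i=\xi^i+\alpha^i$, sublinearity of $G_{\epsilon}$ and the bound $\Lambda_{\epsilon}\leq\epsilon^{-1}$, reading $\bm A(\xi(t))\in H$ off the equation to obtain $\partial_s\psi^i\in L^2$ and the junction condition hidden in the duality, and invertibility of $\nabla G_{\epsilon}$ for $\partial_{ss}\eta^i$). One cosmetic repair: writing $\partial_s\psi^i=\nabla G_{\epsilon}(\partial_s\eta^i)\,\partial_{ss}\eta^i$ presupposes that $\partial_{ss}\eta^i$ exists, so it is cleaner to differentiate $\partial_s\eta^i=F_{\epsilon}(\psi^i)$ using $\psi^i(t,\cdot)\in H^1(\Omega)$ and the global Lipschitz bound on $F_{\epsilon}$, which gives $\partial_{ss}\eta^i=\nabla F_{\epsilon}(\psi^i)\,\partial_s\psi^i\in L^{\infty}\left(0,T;L^2\left(\Omega\right)\right)$ directly --- your uniform lower bound $\lambda_{\epsilon}\geq(\epsilon+\epsilon^{-1/2})^{-1}$ makes the two formulations equivalent.
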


Note that the norms of the solution $\eta=\eta\epp$ in the corresponding spaces above may depend on $\ep$. At this stage we cannot infer an $L^\infty$ estimate on $\ds \eta$ (even $\ep$-dependent) because we do not control $\ds \ei$ on  $\partial\Omega$. Anyway, we will manage to establish a related bound in Corollary \ref{lemmakappacor}. 

It is straightforward to see that $\eta=\eta\epp$ from Corollary \ref{cor:sol} coincides with the unique solution of the gradient flow  \begin{equation} \dot\eta \in -\partial_{L^2(\Omega; \rm^{3d})}\bm{\mathcal{E}}\epp\left(\eta\right) \label{gradflow}
\end{equation} in the sense of \cite[Theorem 17.2.3]{butatm}, where the driving functional $\bm{\mathcal{E}}\epp$ was defined in \eqref{approxenergy}. This in particular implies that $t\mapsto\bm{\mathcal{E}}\epp\left(\eta(t)\right)$ is a continuous and non-increasing function.


\section{Uniform estimates of the approximate solutions} \label{s:sec4}
In this section we derive various uniform (in $\ep$)  estimates for the approximating solutions $\eiee$ obtained in Corollary \ref{cor:sol}. These bounds are crucial for passing to the limit in Section \ref{s:sec5}. In the sequel, $C$ will always stand for a constant independent of $\ep$. For the sake of readability, we drop the dependence on $\ep$ in the subscripts and write $\ei=\eiee$, $G=G\epp$, $\alpha^{i}=\alpha^{i}_{\ep}$, etc., until the proof of Lemma  \ref{lemmakappa}.

\begin{lem}[Energy estimate]\label{pp1}
	Let $\eta=\{\ei\}$ be a solution of the approximating problem \eqref{appproblem}-\eqref{appib} in $\mathfrak Q_{T}$ as constructed in Corollary \ref{cor:sol}. Then
	
	\begin{equation}
		\bm{\mathcal{E}}\left(\alpha \right)+\su\left(\it|\dt\ei|^2+|\nabla G\left(\ds\ei\right)\dss\ei|^2~dsdt\right)+\|\eta\|^2_{L^\infty\l 0,T;L^1\left(\Omega\right)\r }\leq C.\label{pp12}
	\end{equation}
	Here the constant may only depend on $\alpha$ and $T$, but not on $\ep$. 
\end{lem}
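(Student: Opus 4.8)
The backbone of the estimate is the energy–dissipation identity associated with the gradient-flow structure \eqref{gradflow}. The plan is to first establish that, for every $t\in[0,T]$,
\begin{equation*}
\bene\l\eta(t)\r+\su\int_0^t\io|\dt\ei|^2\,ds\,d\tau=\bene(\alpha).
\end{equation*}
One way to obtain this is to invoke the energy–dissipation equality for the gradient-flow solution from \cite[Theorem 17.2.3]{butatm}, which applies by virtue of the regularity recorded in Corollary \ref{cor:sol}. More transparently, I would test \eqref{appproblem} with $\dt\ei$, integrate over $\Omega$, sum over $i$ and integrate by parts in $s$. The boundary terms at $s=1$ vanish since $\ei(t,1)=\alpha^i(1)$ is fixed, so $\dt\ei(t,1)=0$; the boundary terms at $s=0$ cancel after summation because $\dt\e(t,0)=\dt\ee(t,0)=\dt\eee(t,0)$ (differentiating the junction condition in time), while $\su G\epp\l\ds\ei\r(t,0)=0$ is exactly the force-balance condition in \eqref{appib}. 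Using $\nabla Q\epp=G\epp$ from \eqref{compgrad}, the remaining interior term equals $-\tfrac{d}{dt}\bene(\eta)$, and integration in time yields the identity; the regularity class of Corollary \ref{cor:sol} legitimizes all these manipulations.

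Next I would bound $\bene(\alpha)$ uniformly in $\ep$. Since $|\ds\alpha^i|=1$ a.e. by \eqref{alphan}, the elastic part $\ii Q\epp\l\ds\alpha^i\r\,ds$ is controlled by Remark \ref{rem0} (where $Q\epp(\tau)\ll1$ for $|\tau|\le1$), while the potential part is bounded because $\alpha^i\in W^{1,\infty}(\Omega)^d$; hence $\bene(\alpha)\le C$ with $C$ independent of $\ep$. Together with the identity this controls $\su\it|\dt\ei|^2$ as soon as $\bene(\eta(T))$ is bounded below, which follows from the $L^1$ bound below since $Q\epp\ge0$.

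The step I expect to be the main obstacle is the $L^\infty(0,T;L^1)$ bound, because the energy controls $\eta$ only in the direction of $g$; I would close it by a self-improving (bootstrap) argument. Writing $\ei(t,s)=\alpha^i(1)-\int_s^1\ds\ei(t,s')\,ds'$ and using $|\alpha^i(1)|<1$ gives $\|\eta(t)\|_{L^1}\lesssim1+\su\|\ds\ei(t,\cdot)\|_{L^1}$. The key pointwise inequality, read off from the explicit forms \eqref{dff} and \eqref{gdef}, is $|\tau|\le1+\sqrt{2\,Q\epp(\tau)}$ for all $\tau\in\rm^d$, $\ep\in(0,1)$: indeed, with $\rho:=|G\epp(\tau)|$ one checks $\ep\rho^2\le2Q\epp(\tau)$ from \eqref{gdef}, while $|\tau|=|F\epp(G\epp(\tau))|\le\ep\rho+1$. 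Integrating and applying Jensen's inequality, $\su\|\ds\ei\|_{L^1}\lesssim1+\big(\su\ii Q\epp\l\ds\ei\r ds\big)^{1/2}$. On the other hand, monotonicity $\bene(\eta(t))\le\bene(\alpha)\le C$ gives $\su\ii Q\epp\l\ds\ei\r ds\le C+\su\ii g\cdot\ei\,ds\le C+\|\eta(t)\|_{L^1}$. Writing $\mathcal Q(t):=\su\ii Q\epp\l\ds\ei\r ds$ and $N(t):=\|\eta(t)\|_{L^1}$, the two relations read $N\lesssim1+\mathcal Q^{1/2}$ and $\mathcal Q\le C+N$, which combine into $\mathcal Q\lesssim1+\mathcal Q^{1/2}$ and hence bound $\mathcal Q(t)$ and $N(t)$ uniformly in $t$ and $\ep$.

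Finally, the curvature-type term is immediate from the equation: since $\nabla G\epp\l\ds\ei\r\dss\ei=\ds\l G\epp\l\ds\ei\r\r=\dt\ei-g$ by \eqref{appproblem} and $|g|=1$, we get $|\nabla G\epp\l\ds\ei\r\dss\ei|^2\le2|\dt\ei|^2+2$, so $\su\it|\nabla G\l\ds\ei\r\dss\ei|^2\le2\su\it|\dt\ei|^2+6T$, which is controlled by the dissipation bound. Collecting the three contributions gives \eqref{pp12} with a constant depending only on $\alpha$ and $T$.
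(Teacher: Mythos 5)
Your proof is correct, and its skeleton coincides with the paper's: you test \eqref{appproblem} with $\dt\ei$, cancel the boundary terms at $s=0$ and $s=1$ exactly as the paper does, use $\nabla Q\epp=G\epp$ to get the energy--dissipation identity \eqref{forremarks}, bound $\bene(\alpha)$ uniformly via Remark \ref{rem0} and \eqref{gdef}, and read off the curvature term from $\nabla G\epp\l\ds\ei\r\dss\ei=\dt\ei-g$, just as in the paper (your side remark that the identity also follows from \cite[Theorem 17.2.3]{butatm} matches the paper's own footnote). The one step where you genuinely diverge is the $L^\infty(0,T;L^1)$ bound. The paper argues \emph{in time}: it writes $\ei(\tt)=\alpha^i+\int_0^{\tt}\dt\ei\,dt$, applies Jensen's inequality and the dissipation bound to obtain $\tfrac13\|\eta(\tt)\|^2_{L^1}\leq 2\|\alpha\|^2_{L^2}+2T\bene(\alpha)+2T\|\eta(\tt)\|_{L^1}\,\|\mathbf g\|_{L^\infty}$, and closes by the quadratic-absorption argument, yielding a $T$-dependent bound. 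You argue \emph{in space} at each fixed time: anchoring at the fixed end $s=1$ and using the pointwise coercivity $|\tau|\leq 1+\sqrt{2\,Q\epp(\tau)}$ --- which is correct, since with $\rho=|G\epp(\tau)|$ one has $2Q\epp(\tau)-\ep\rho^2=2\sqrt\ep-2\ep\l\ep+\rho^2\r^{-1/2}\geq 0$ and $|\tau|\leq\ep\rho+1\leq 1+\sqrt{2Q\epp(\tau)}$ for $\ep<1$ --- you close a self-consistent loop between $N(t)$ and $\mathcal Q(t)$ using only the monotonicity $\bene(\eta(t))\leq\bene(\alpha)$. Your variant buys a bound on $\|\eta\|_{L^\infty(0,T;L^1)}$ that is uniform in $T$ (the paper's grows with $T$), at the price of exploiting the explicit structure of $Q\epp$ rather than merely $Q\epp\geq 0$; since the dissipation and curvature terms in \eqref{pp12} carry a $T$-dependence anyway, both routes deliver the stated lemma.
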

\begin{proof} We first establish a uniform bound (w.r.t. $\ep$)  on the initial energies. Indeed, since $|\ds\ai |=1$,  Remark \ref{rem0} implies that $|G(\ds \ai)|<1$, and using the explicit definition of $Q$ given in \eqref{gdef}, we get that the first terms (for each $i$) in the expansion  $$\bm{\mathcal{E}}\left(\alpha\right)=\su\left(\ii Q\left(\ds\ai\right)ds+\ii\left(-g\right)\cdot\ai ds\right)$$ are uniformly bounded. The second  terms are obviously uniformly bounded.

	We now prove \eqref{pp12}.
	Take the $L^2(\Omega)$-inner product of \eqref{appproblem} and $\dt\ei$ and integrate over $\mathfrak Q_{\mathfrak t}$, $\mathfrak t\in (0,T]$. We obtain
	\begin{equation*}
		\su\itt|\dt\ei|^2~dsdt=\su\itt \ds G\left(\ds\ei\right)\cdot\dt\ei ~dsdt+\su\itt g\cdot\dt\ei ~dsdt
	\end{equation*}
	Now we perform an integration by parts in space in the first member, integrate the second member over time and use the initial conditions, ending up with
	
	\begin{align*}
		\su\itt|\dt\ei|^2~dsdt=&-\su\itt G\left(\ds\ei\right)\cdot\d_{st}\ei ~dsdt+\su\io g\cdot\ei\big|_{t=\tt} ~ds-\su\io g\cdot\eta^{i}\big|_{t=0}~ds\\
		&+\su\int_{0}^{\tt}\underbrace{G\left(\ds\ei\right)\cdot\dt\ei}_{\text{at}~s=1}dt-\su\int_{0}^{\tt}\underbrace{G\left(\ds\ei\right)\cdot\dt\ei}_{\text{at}~s=0}dt\\
		=&-\su\itt G\left(\ds\ei\right)\cdot\d_{st}\ei ~dsdt+\su\io g\cdot\ei(\tt) ~ds-\su\io g\cdot\alpha^{i}~ds\\
		&+\su\int_{0}^{\tt}\underbrace{\underbrace{G\left(\ds\ei(1)\right)\cdot\dt\alpha^i(1)}_{\dt\alpha^i=0}}_{=0}dt-\su\int_{0}^{\tt}\underbrace{\underbrace{G\left(\ds\ei(0)\right)\cdot\dt\bar{\eta}}_{\su G\left(\ds\ei(0)\right)=0}}_{=0}dt.
	\end{align*}  Here $\bar{\eta}(t):=\e(t,0)=\ee(t,0)=\eee\l t,0\r$ denotes the spatial position of the junction. 
	Consequently, 
	\begin{equation} \label{e:prom1}
		\su\itt|\dt\ei|^2~dsdt=-\su\itt G\left(\ds\ei\right)\cdot\d_{st}\ei ~dsdt+\su\io g\cdot\ei(\tt) ~ds-\su\io g\cdot\alpha^{i}~ds.
	\end{equation} For the first term on the right-hand side, we observe that \begin{equation}\label{gder}  G(\ds\ei)\cdot\d_{st}\ei=\dt Q \left(\ds\ei\right),\end{equation} cf. \eqref{compgrad},
where $Q$ is defined as in \eqref{gdef}. 
	In view of \eqref{gder}, \eqref{e:prom1} becomes
	\begin{align*}
		\su\itt|\dt\ei|^2 ~dsdt+\su\io Q &\left(\ds\ei\right)\left(\tt,\cdot\right)+\io\left(-\mathbf g\right)\cdot\eta\left(\tt,\cdot\right)ds\\
		&=\su\io Q \left(\ds\alpha^{i}\left( s\right )\right)ds+\io\left(-\mathbf g\right)\cdot\alpha\left( s\right )ds,
	\end{align*}
	whence\footnote{Of course, equality \eqref{forremarks} is a generic property of gradient flows and at least the fact that its right-hand side is greater than or equal to the left-hand one follows from the general theory, cf. \cite{butatm}. We decided to present a direct and explicit proof here  in order to help the reader to perceive the non-standard boundary conditions of the problem ``by touching''. }
	\begin{equation} \label{forremarks}
		\su\itt|\dt\ei|^2~dsdt+\bm{\mathcal{E}}\left(\eta(\tt)\right)=\bm{\mathcal{E}}(\alpha ).
	\end{equation}
	Using $Q\geq 0$ and the definition of $\bm{\mathcal{E}}$, we derive that
	\begin{equation} \label{bounde}
		\bm{\mathcal{E}}\left(\eta(\tt)\right)\geq-\|\eta(\tt)\|_{L^1\left(\Omega\right)}\|\mathbf g\|_{L^\infty\left(\Omega\right)}.
	\end{equation}
	Hence, employing Jensen's inequality, we can estimate
	\begin{align*}
			\frac 1 3\|\eta(\tt)\|^2_{L^1\left(\Omega\right)}\leq \su\|\eta^i(\tt)\|^2_{L^1\left(\Omega\right)}=\su\l \int_\Omega|\ei(\tt)|~ds\r ^2\\= \su\l \int_\Omega|\ai|~ds+\itt\dt|\ei|~dsdt\r ^2\leq 2\|\alpha\|^2_{L^2(\Omega)}+ 2\su\l \itt|\dt\ei|~dsdt\r ^2\\ \leq 2\|\alpha\|^2_{L^2(\Omega)}+2\tt\su\itt|\dt\ei|^2~dsdt\leq 2\|\alpha\|^2_{L^2(\Omega)}+2T\bm{\mathcal{E}}\left(\alpha\right)+ 2T\|\eta(\tt)\|_{L^1\left(\Omega\right)}\|\mathbf g\|_{L^\infty\left(\Omega\right)}.
	\end{align*}
Simple algebra implies that $\|\eta(\tt)\|_{L^1\left(\Omega\right)}$ is uniformly bounded. Consequently, $\su\it|\dt\ei|^2~dsdt$ is uniformly bounded. 
	On the other hand, from the equality $\ds \l G\left(\ds\ei\right)\r =\dt\ei-g$ we deduce\begin{align*}
		\su\it|\nabla G(\ds \ei)\d_{ss}\ei|^2~dsdt&=\su\it|\ds \l G(\ds\ei)\r|^2 ~dsdt\\&\leq2\su\it|\dt\ei|^2~dsdt+6\it|g|^2~dsdt\leq C.
	\end{align*}
We have used Jensen's inequality and the fact that $\mathbf g=(g,g,g)$. 
\end{proof}
In view of \eqref{bounde} we simultaneously proved the following. 
\begin{cor}\label{lemmaenergy}The energy of the approximating problem $\bm{\mathcal{E}}\left(\eta(t)\right)$ is bounded from below for all $t\in[0,T]$ uniformly in $\ep$.\end{cor}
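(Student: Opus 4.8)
The plan is to read off the result directly from the estimates already established in Lemma \ref{pp1}; no genuinely new argument is required. The starting point is the pointwise lower bound \eqref{bounde},
$$\bm{\mathcal{E}}\left(\eta(t)\right)\geq-\|\eta(t)\|_{L^1\left(\Omega\right)}\|\mathbf g\|_{L^\infty\left(\Omega\right)},$$
valid for every $t\in[0,T]$, which was obtained en route to \eqref{pp12} by discarding the nonnegative $Q$-terms from the definition of $\bm{\mathcal{E}}$. Since $\mathbf g=(g,g,g)$ with $|g|=1$, the factor $\|\mathbf g\|_{L^\infty\left(\Omega\right)}$ is a fixed finite constant that does not depend on $\ep$.

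The only remaining ingredient is a uniform-in-$\ep$ control of $\|\eta(t)\|_{L^1(\Omega)}$, and this is precisely what \eqref{pp12} supplies: it asserts $\|\eta\|^2_{L^\infty\l 0,T;L^1\left(\Omega\right)\r}\leq C$ with $C$ depending only on $\alpha$ and $T$. Hence $\|\eta(t)\|_{L^1(\Omega)}\leq\sqrt{C}$ for all $t\in[0,T]$, uniformly in $\ep$. Substituting this into the displayed inequality yields
$$\bm{\mathcal{E}}\left(\eta(t)\right)\geq-\sqrt{C}\,\|\mathbf g\|_{L^\infty\left(\Omega\right)},$$
a lower bound independent of both $t$ and $\ep$, which is exactly the claim.

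There is no real obstacle at this stage: the substantive work---the energy identity \eqref{forremarks}, the lower bound \eqref{bounde}, and the bootstrap via Jensen's inequality that produces the uniform $L^1$ bound---was all carried out inside the proof of Lemma \ref{pp1}. The corollary is merely a repackaging of those conclusions, isolating the uniform lower bound on the energy so that it can be invoked separately when passing to the limit in Section \ref{s:sec5}.
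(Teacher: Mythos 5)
Your proposal is correct and follows exactly the paper's route: the authors obtain the corollary as an immediate byproduct of the proof of Lemma \ref{pp1}, combining the lower bound \eqref{bounde} (which, since $Q\geq 0$, holds for all $t\in[0,T]$, including $t=0$) with the uniform $L^\infty\l 0,T;L^1\l\Omega\r\r$ bound on $\eta$ contained in \eqref{pp12}. Nothing is missing; this is indeed just a repackaging of what was already established.
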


Since $\eta^i(0)=\alpha^i$ does not depend on $\ep$, the uniform regularity can immediately be improved by the Poincar\'e inequality.

\begin{cor}\label{lemmaeasycor}
	The norm $||\ei||_{L^{\infty}\l 0,T;L^{2}\l\Omega\r\r}$ is uniformly bounded with respect to $\ep$. 
\end{cor}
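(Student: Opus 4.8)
The plan is to deduce the bound directly from the energy estimate of Lemma \ref{pp1}, using only that the initial configuration $\ei(0)=\alpha^i$ is fixed (hence independent of $\ep$). The crucial input is the uniform bound $\su\it|\dt\ei|^2~dsdt\leq C$ contained in \eqref{pp12}, whose constant depends on $\alpha$ and $T$ but not on $\ep$.

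By the regularity $\ei\in W^{1,\infty}\l 0,T;L^2\l\Omega\r\r^d$ established in Corollary \ref{cor:sol}, the map $t\mapsto\ei(t,\cdot)$ is absolutely continuous into $L^2\l\Omega\r^d$, so the fundamental theorem of calculus in time (a Poincar\'e-type inequality, since the value at $t=0$ is prescribed) gives, for every $t\in[0,T]$,
\begin{equation*}
\ei(t,\cdot)=\alpha^i+\int_0^t\dt\ei(\tau,\cdot)~d\tau \quad\text{in}~L^2\l\Omega\r^d.
\end{equation*}
Taking $L^2\l\Omega\r$ norms, using the triangle inequality and then Cauchy--Schwarz in $\tau$, I obtain
\begin{equation*}
\|\ei(t,\cdot)\|_{L^2\l\Omega\r}\leq\|\alpha^i\|_{L^2\l\Omega\r}+\sqrt{T}\l\it|\dt\ei|^2~dsdt\r^{1/2}.
\end{equation*}

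The right-hand side is independent of $t$, so taking the supremum over $t\in[0,T]$ bounds $\|\ei\|_{L^\infty\l 0,T;L^2\l\Omega\r\r}$. The first term is finite and $\ep$-independent because $\alpha^i\in W^{1,\infty}\l\Omega\r^d$ is fixed, while the second is uniformly controlled by Lemma \ref{pp1}. Summing over $i=1,2,3$ finishes the argument.

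I expect no genuine obstacle: this corollary is morally the $L^2$ counterpart of the $L^1$ bound already obtained inside the proof of Lemma \ref{pp1} (there one estimates $\ii|\dt\ei|$ and uses Jensen; here one estimates $\ii|\dt\ei|^2$ directly). The single point deserving care is the uniformity of all constants in $\ep$, which holds precisely because $\alpha$ is fixed once and for all (Remark \ref{remofradius}) and the energy estimate \eqref{pp12} is itself uniform in $\ep$.
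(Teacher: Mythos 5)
Your argument is correct and is precisely the paper's own proof spelled out: the paper disposes of this corollary in one line, invoking the Poincar\'e inequality in time (i.e., $\ei(t)=\alpha^i+\int_0^t\dt\ei\,d\tau$ with the $\ep$-independent initial datum) together with the uniform bound on $\it|\dt\ei|^2\,dsdt$ from Lemma \ref{pp1}, exactly as you do. Nothing is missing, and your care about $\ep$-uniformity of the constants matches the paper's intent.
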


For the the subsequent family of estimates will need to bound the time away from zero by some constant $\delta>0$.  

\begin{lem}\label{lemmadteta}
	Given $\delta>0$, the norm $||\dt\eta||_{L^{\infty}\l\delta,T;L^2\l\Omega\r\r}$ is bounded uniformly in $\ep$.
\end{lem}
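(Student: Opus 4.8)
The plan is to upgrade the space--time bound $\su\it|\dt\ei|^2\,dsdt\leq C$ already obtained in Lemma \ref{pp1} to an $L^\infty$-in-time bound (away from $t=0$) by exploiting the regularizing effect of the gradient flow. The key extra fact is that the instantaneous dissipation $t\mapsto\|\dt\eta(t)\|_{L^2(\Omega)}$ is \emph{non-increasing} on $(0,T)$. This is a manifestation of the convexity of the driving functional: $Q$ is convex because $\nabla G$ is positive-definite by \eqref{lambdainequality}, and the remaining term of $\bene$ is affine, so \eqref{appproblem}--\eqref{appib} is the $L^2$-gradient flow of a convex energy. Granting the monotonicity, for every $t\geq\delta$ I would conclude
\begin{equation*}
\|\dt\eta(t)\|^2_{L^2(\Omega)}\leq\frac1t\int_0^t\|\dt\eta(\tau)\|^2_{L^2(\Omega)}\,d\tau\leq\frac1\delta\su\it|\dt\ei|^2\,dsdt\leq\frac C\delta,
\end{equation*}
which is exactly the claimed uniform estimate.

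To prove the monotonicity without differentiating the equation twice in time (which the regularity of Corollary \ref{cor:sol} does not license), I would work with time difference quotients. Fixing $h>0$ and setting $z^i(t,s):=\ei(t+h,s)-\ei(t,s)$, subtracting \eqref{appproblem} at the times $t+h$ and $t$ cancels the gravity term and gives $\dt z^i=\ds\l G\l\ds\ei(t+h)\r-G\l\ds\ei(t)\r\r$. Taking the $L^2(\Omega)$ inner product with $z^i$, summing over $i$, and integrating by parts in $s$ (all justified by the regularity of Corollary \ref{cor:sol}) yields
\begin{equation*}
\frac12\frac{d}{dt}\su\|z^i(t)\|^2_{L^2(\Omega)}=\text{(boundary terms)}-\su\ii\l G\l\ds\ei(t+h)\r-G\l\ds\ei(t)\r\r\cdot\l\ds\ei(t+h)-\ds\ei(t)\r\,ds,
\end{equation*}
and the last integral is nonnegative since $\nabla G>0$ makes $G$ a monotone map.

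The decisive point is that the boundary terms vanish, and here the nonstandard conditions \eqref{appib} are exactly what is needed. At $s=1$ the fixed-end condition gives $\ei(t+h,1)=\alpha^i(1)=\ei(t,1)$, hence $z^i(t,1)=0$. At $s=0$ the three traces $z^i(t,0)$ all equal the common junction increment $\bar\eta(t+h)-\bar\eta(t)$, while differencing the force-balance condition $\su G\l\ds\ei\r(t,0)=0$ shows that $\su\l G\l\ds\ei(t+h,0)\r-G\l\ds\ei(t,0)\r\r=0$; factoring out the common trace makes the contribution at $s=0$ vanish as well. Therefore $t\mapsto\su\|z^i(t)\|^2_{L^2(\Omega)}$ is non-increasing; dividing by $h^2$ and letting $h\to0$ (legitimate because $\dt\ei\in L^\infty(0,T;L^2(\Omega))$ makes $\ei$ Lipschitz into $L^2(\Omega)$, so the quotients converge to $\dt\ei$ at almost every $t$) upgrades this to the monotonicity of $t\mapsto\|\dt\eta(t)\|_{L^2(\Omega)}$.

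I expect this monotonicity step to be the main obstacle. The modest time regularity of the approximate solution forbids a naive differentiation of the PDE in $t$, while the freely moving junction forces a careful check that no boundary terms survive the integration by parts; the difference-quotient device handles both issues simultaneously. As an alternative, one may appeal to the abstract regularizing estimate for gradient flows of convex functionals in Hilbert space (cf. \cite{brezisbook,butatm}), which delivers the non-increase of $\|\dt\eta(t)\|_{L^2(\Omega)}$ directly from the convexity of $\bene$.
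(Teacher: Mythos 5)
Your proof is correct, and its skeleton --- monotonicity of the dissipation $t\mapsto\|\dt\eta(t)\|_{L^2(\Omega)}$ combined with averaging over the initial layer to obtain $\|\dt\eta(t)\|^2_{L^2(\Omega)}\leq C/\delta$ --- is exactly that of the paper. The difference is in how the monotonicity is justified: the paper identifies the approximate solution with the unique gradient flow of the convex functional $\bene\epp$ in the sense of \cite[Theorem 17.2.3]{butatm} and simply quotes the abstract facts that the right derivative $\dt^+\eta$ exists for every $t$, that $\|\dt^+\eta(t)\|_{L^2(\Omega)}$ is non-increasing, and the dissipation identity \cite[formula (17.79)]{butatm}; you instead prove the monotonicity by hand via the time-shift contraction estimate for $z^i(t)=\ei(t+h)-\ei(t)$, checking that the junction contributions cancel (the fixed ends give $z^i(t,1)=0$, while at $s=0$ the common trace factors out of the differenced flux-balance condition) and using monotonicity of $G\epp$ for the bulk term. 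This computation is sound, and it can even be streamlined: since $z(t)\in V$, pairing the differenced equation with $z$ in the duality \eqref{d:a} yields
\begin{equation*}
\tfrac12\tfrac{d}{dt}\|z(t)\|^2_{H}=-\bigl\langle\bm A\l\xi(t+h)\r-\bm A\l\xi(t)\r,z(t)\bigr\rangle\leq 0
\end{equation*}
directly from Lemma \ref{mainineqaulity}, the boundary bookkeeping being already absorbed into the weak formulation. What each route buys: yours is self-contained and avoids the identification with the abstract flow (at the price of a.e.-in-$t$ caveats --- the difference quotients converge only at a.e. $t$, which is enough for the essential supremum; note also that the $\ep$-dependent bound $\dt\ei\in L^\infty\l 0,T;L^2\l\Omega\r\r$ from Corollary \ref{cor:sol} is used only qualitatively here, so no uniformity is lost), whereas the paper's citation is shorter and gives the non-increase of the everywhere-defined right derivative. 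The two averaging steps are equivalent in substance: the bound $\int_0^\delta\|\dt\eta\|^2_{L^2(\Omega)}dt\leq C$ that you import from \eqref{pp12} is, in the paper, expressed as $\bene(\alpha)-\bene\l\eta(\delta)\r$ together with the uniform lower energy bound of Corollary \ref{lemmaenergy}, both stemming from the same identity \eqref{forremarks}.
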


\begin{proof}
	By \cite[Theorem 17.2.3]{butatm}, the right derivative $\dt^+\eta$ exists for all times, and the expression $||\dt^+\eta\l t\r||^2_{L^{2}\l \Omega\r}$ is non-increasing in time. Using \cite[formula (17.79)]{butatm}, we obtain
	\begin{align*}\bm{\mathcal{E}}\left(\alpha \right)-\bm{\mathcal{E}}\left(\eta\l\delta\r\right)\geq &
		\limsup_{h\searrow 0}\bm{\mathcal{E}}\left(\eta\l h\r\right)-\bm{\mathcal{E}}\left(\eta\l\delta\r\right)\\=&\int_{0}^{\delta}||\dt\eta\l t\r||^{2}_{L^{2}\l \Omega\r}dt\\&\geq \int_{0}^{\delta}||\dt^+\eta\l\delta \r||^2_{L^{2}\l \Omega\r}\\&=\delta||\dt^+\eta\l\delta\r||^2_{L^{2}\l\Omega\r}.
	\end{align*}
By \eqref{pp12} and Corollary \ref{lemmaenergy},  the left-hand side is bounded from above uniformly in $\epsilon$.  Hence, $||\dt^+\ei\l\delta\r||_{L^{2}\l\Omega\r}\leq C/\delta$.
  
Since $||\dt^+\eta\l t\r||_{L^{2}\l \Omega\r}$ is non-increasing in time,  we infer that  $||\dt\ei||_{L^{\infty}\l\delta,T;L^{2}\l\Omega\r\r}$=$||\dt^+\ei||_{L^{\infty}\l\delta,T;L^{2}\l\Omega\r\r}$ is bounded uniformly in $\epsilon$. 
\end{proof}
We now derive uniform bounds for $\ki$ that were defined in \eqref{defeq}.  We start with the following lemma.

\begin{lem}\label{lemmaproduct}
	For fixed $\delta>0$, $\ds\psi^{i}$ and the product $|\psi^{i}||\dss\ei-\ep\ds\ki|$  are bounded in $L^{\infty}\l \delta,T;L^{2}\l\Omega\r\r$ uniformly with respect to $\ep$, $i=1,2,3$.
\end{lem}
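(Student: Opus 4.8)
The plan is to prove the two assertions in turn, the first being almost immediate and the second resting on a fortunate cancellation. First I would bound $\ds\psi^{i}$. Since $\psi^{i}=G\l\ds\ei\r$, the approximating equation \eqref{appproblem} reads $\dt\ei=\ds\psi^{i}+g$, whence the pointwise identity $\ds\psi^{i}=\dt\ei-g$. By Lemma \ref{lemmadteta} the norm $\|\dt\eta\|_{L^{\infty}\l\delta,T;L^{2}\l\Omega\r\r}$ is bounded uniformly in $\ep$, and $g$ is a fixed vector with $|g|=1$; therefore $\ds\psi^{i}$ is bounded in $L^{\infty}\l\delta,T;L^{2}\l\Omega\r\r$ uniformly in $\ep$.

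For the product I would exploit the explicit form of the constitutive relation. Inverting $F\epp$ (cf. \eqref{dff}) in the relation $\psi^{i}=G\l\ds\ei\r$ gives
\[ \ds\ei=\ep\psi^{i}+\frac{\psi^{i}}{\sqrt{\ep+|\psi^{i}|^{2}}}. \]
For a.e. fixed $t$ the regularity of Corollary \ref{cor:sol} (namely $\ds\psi^{i}\in L^{2}\l 0,T;H^{1}\r$, so that $\psi^{i}\l t,\cdot\r\in H^{2}\l\Omega\r\subset C^{1}\l\overline\Omega\r$, together with $\dss\ei\in L^{\infty}\l 0,T;L^{2}\r$) legitimizes differentiating this identity in $s$. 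Cancelling the common term $\ep\ds\psi^{i}$ leaves
\[ \dss\ei-\ep\ds\psi^{i}=\frac{\ds\psi^{i}}{\sqrt{\ep+|\psi^{i}|^{2}}}-\frac{\psi^{i}\l\psi^{i}\cdot\ds\psi^{i}\r}{\l\ep+|\psi^{i}|^{2}\r^{3/2}}, \]
which is precisely $\ds$ of the ``normalized part'' $\psi^{i}/\sqrt{\ep+|\psi^{i}|^{2}}=\ds\ei-\ep\psi^{i}$ of $\ds\ei$.

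The concluding step is a pointwise estimate. Multiplying the last identity by $|\psi^{i}|$ and using $\tfrac{|\psi^{i}|}{\sqrt{\ep+|\psi^{i}|^{2}}}\leq 1$, the Cauchy--Schwarz inequality $|\psi^{i}\cdot\ds\psi^{i}|\leq|\psi^{i}|\,|\ds\psi^{i}|$, and $\tfrac{|\psi^{i}|^{3}}{\l\ep+|\psi^{i}|^{2}\r^{3/2}}\leq 1$, each of the two terms is dominated by $|\ds\psi^{i}|$, so that
\[ |\psi^{i}|\,|\dss\ei-\ep\ds\psi^{i}|\leq 2|\ds\psi^{i}| \quad\textrm{a.e. in }\mathfrak Q_{T}. \]
Taking the $L^{2}\l\Omega\r$-norm in space and the $L^{\infty}$-norm over $\l\delta,T\r$ and invoking the first part then yields the claimed uniform bound. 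I do not expect a genuine obstacle here: the only subtlety is justifying the $s$-differentiation, which the regularity of Corollary \ref{cor:sol} supplies, and the whole point of the argument is that although the factor $\ep$ and the quantity $\ep\ds\psi^{i}$ are not controlled uniformly as $\ep\to 0$, they disappear upon subtraction, leaving the combination $\dss\ei-\ep\ds\psi^{i}$ tied solely to the uniformly bounded $\ds\psi^{i}$.
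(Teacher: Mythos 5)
Your proof is correct and follows essentially the same route as the paper: the identity $\ds\psi^{i}=\dt\ei-g$ combined with Lemma \ref{lemmadteta} for the first claim, and differentiation of $\ds\ei=F\epp(\psi^{i})$ in $s$ followed by the cancellation of $\ep\ds\psi^{i}$ for the second. The only (cosmetic) difference is that the paper multiplies the differentiated identity by $\sqrt{\ep+|\psi^{i}|^{2}}$ and then uses $\sqrt{\ep+|\psi^{i}|^{2}}\geq|\psi^{i}|$, whereas you multiply by $|\psi^{i}|$ directly and obtain the explicit pointwise bound $|\psi^{i}|\,|\dss\ei-\ep\ds\psi^{i}|\leq 2|\ds\psi^{i}|$ — the same estimate in slightly different packaging.
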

\begin{proof} By Lemma \ref{lemmadteta}, we know that $||\dt\ei||_{L^{\infty}\l\delta,T;L^2\l \Omega\r\r}\leq C$. Since $\dt\ei=\ds\psi^{i}+g$, we infer that $\ds\psi^{i}$ is bounded in $L^{\infty}\l \delta,T;L^{2}\l\Omega\r\r$ uniformly with respect to $\ep$.
	We differentiate both sides of the equality $$\ds\ei=F\epp(\psi^{i})=\ep\psi^{i}+\frac{\psi^{i}}{\sqrt{\ep+|\psi^{i}|^2}}$$ with respect to $s$ to get 
	$$\dss\ei=\ep\ds\psi^{i}+\frac{\ds\psi^{i}}{\sqrt{\ep+|\psi^{i}|^2}}-\frac{\psi^{i}\l\ds\psi^{i}\cdot\psi^{i}\r}{(\ep+|\psi^{i}|^2)^{3/2}}.$$
	
	We multiply this equality by $\sqrt{\ep+|\psi^{i}|^2}$ and deduce $$\dss\ei\sqrt{\ep+|\psi^{i}|^2}=\ep\ds\psi^{i}\sqrt{\ep+|\psi^{i}|^2}+\ds\psi^{i}-\frac{\psi^{i}  \l \ds \psi^{i}  \cdot \psi^{i}\r}{\ep+|\psi^{i}|^2}.$$ 
	We reorganize the equality above to obtain
	$$\l\dss\ei-\ep\ds\psi^{i}\r\sqrt{\ep+|\psi^{i}|^2}=\ds\psi^{i}-\frac{\psi^{i}\l\ds\psi^{i}\cdot\psi^{i}\r}{\ep+|\psi^{i}|^2}.$$ 
	The right-hand side is bounded in $L^{\infty}\l \delta,T;L^{2}\l\Omega\r\r$ uniformly with respect to $\ep$, hence so is the left-hand side. Consequently, $|\psi^{i}||\dss\ei-\ep\ds\psi^{i}|$ is bounded in $L^{\infty}\l \delta,T;L^{2}\l\Omega\r\r$ uniformly with respect to $\ep$.
\end{proof}

\begin{lem} \label{lemmatre} Let $\Upsilon$ be a finite set in $\mathbb{R}^d$. Assume that there exists a point in the convex hull of $\Upsilon$ such that the distance between it and $\Upsilon$ is greater than or equal to $1$.  Then the radius of the smallest enclosing ball for $\Upsilon$ is greater than or equal to $1$. 
	\end{lem}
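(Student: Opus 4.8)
The plan is to exploit the elementary ``parallel axis'' identity for a weighted barycenter, which directly links the given interior point to the center of the smallest enclosing ball without invoking any structural characterization of the minimal ball.

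First I would set up notation: write $\Upsilon=\{p_1,\dots,p_n\}$ and let $x=\sum_{j=1}^n\lambda_j p_j$, with $\lambda_j\ge 0$ and $\sum_{j=1}^n\lambda_j=1$, be the point in the convex hull whose distance to $\Upsilon$ is at least $1$; thus $|x-p_j|\ge 1$ for every $j$. Let $c$ and $R$ denote the center and radius of the smallest enclosing ball of $\Upsilon$, so that $|c-p_j|\le R$ for all $j$.

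The key step is the identity
\[
\sum_{j=1}^n\lambda_j|y-p_j|^2=\sum_{j=1}^n\lambda_j|x-p_j|^2+|y-x|^2,\qquad y\in\mathbb{R}^d,
\]
which holds because $x$ is the barycenter $\sum_{j}\lambda_j p_j$: the function $y\mapsto\sum_{j}\lambda_j|y-p_j|^2$ has constant Hessian $2I$ and its unique minimizer at $y=x$. Evaluating at $y=c$ gives
\[
\sum_{j=1}^n\lambda_j|c-p_j|^2=\sum_{j=1}^n\lambda_j|x-p_j|^2+|c-x|^2\ge\sum_{j=1}^n\lambda_j|x-p_j|^2\ge\sum_{j=1}^n\lambda_j=1,
\]
where the last inequality uses $|x-p_j|\ge 1$ together with $\sum_{j}\lambda_j=1$. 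On the other hand, the left-hand side is bounded above by $\sum_{j}\lambda_j R^2=R^2$, since $|c-p_j|\le R$. Combining the two bounds yields $R^2\ge 1$, that is $R\ge 1$, as claimed.

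I expect no serious obstacle once the barycenter identity is in hand; the only point to keep in mind is that the argument never actually uses minimality of the ball (it shows that \emph{every} enclosing ball has radius at least $1$), so the conclusion for the smallest enclosing ball follows a fortiori. The sole conceptual hurdle is recognizing that the right quantity to compare is the $\lambda$-weighted sum of squared distances, which is precisely the device that turns the hypothesis (distance $\ge 1$ from a convex-combination point) and the desired conclusion (radius $\ge 1$) into a single chain of inequalities.
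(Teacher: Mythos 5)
Your proof is correct, and it takes a recognizably different route from the paper's. You work with the weighted parallel-axis identity $\sum_j\lambda_j|y-p_j|^2=\sum_j\lambda_j|x-p_j|^2+|y-x|^2$ (which indeed holds by direct expansion, since $x=\sum_j\lambda_j p_j$) and evaluate it at the center $c$ of any enclosing ball, obtaining $R^2\geq\sum_j\lambda_j|c-p_j|^2\geq 1$ directly. The paper instead translates $x$ to the origin and argues by contradiction through a linear separation: if some $p$ satisfied $|y-p|<1$ for all $y\in\Upsilon$, then $y\cdot p\geq\tfrac{1}{2}|y|^2-\tfrac{1}{2}|y-p|^2>0$ for each $y$, and pairing $p$ against the convex combination representing the origin yields $0>0$. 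At bottom both arguments rest on the same polarization of $|y-p|^2$: the paper's proof amounts to summing the per-point inequality against the weights $\lambda_j$ and discarding $|p|^2\geq 0$, whereas you retain the full quadratic identity. What your version buys is a direct (non-contradiction) proof together with the quantitatively stronger conclusion $R^2\geq 1+|c-x|^2$ for \emph{every} enclosing ball; what the paper's version buys is brevity, a two-line contradiction with no identity to set up. Your observation that minimality of the ball is never used applies equally to the paper's proof, which likewise rules out all balls of radius less than $1$.
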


\begin{proof} Translating the origin if necessary, we may assume that the origin belongs to the convex hull of $\Upsilon$, and $\Upsilon$ does not intersect with the open unit ball centered in the origin. It suffices to prove that there is no $p\in \mathbb{R}^d$ with $|y-p|<1$ for any $y\in\Upsilon$. Indeed, if such $p$ exists, then $y \cdot p\geq \frac 1 2 |y|^2 - \frac 1 2 |y-p|^2> 0$. Since the origin belongs to the convex hull of $\Upsilon$, we infer $0>0$, a contradiction. 
	\end{proof}

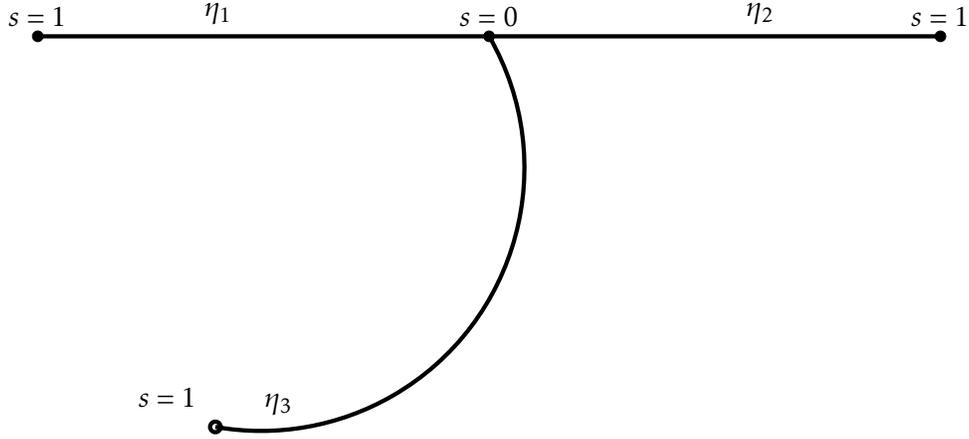
\begin{figure}[h]
	\begin{center}
		\begin{tikzpicture}
			\draw[ultra thick](0,0)--(12,0)node[pos=0.2,sloped,above]{$\eta_{1}$}node[pos=0.8,sloped,above] {$\eta_{2}$} ;
			\draw [ultra thick](6,0) arc (30:-100:3.5cm) circle (2pt) node[anchor=south] {$s=1~~~~~~~~~\eta_{3}$} ;
			\filldraw[black] (6,0) circle (2pt) node[anchor=south] {$s=0$};
			\filldraw[black] (12,0) circle (2pt) node[anchor=south] {$s=1$};
			\filldraw[black] (0,0) circle (2pt) node[anchor=south] {$s=1$};
		\end{tikzpicture}
	\end{center}
	\caption{Symbolic depiction of the 1st scenario in Lemma \ref{lemmakappa}: two arms of the triod tend to the straight position}\label{fig1}
\end{figure}

\begin{figure}[h]
	\begin{center}
		\begin{tikzpicture}
			\draw[ultra thick](-6.80,2)--(0,0)node[pos=0.5,sloped,above]{$\eta_{1}$};
			\draw[ultra thick](0,0)--(6.80,2)node[pos=0.5,sloped,above] {$\eta_{2}$} ;
			\draw [ultra thick](0,0)--(0,-4.3) node[midway,right]{$\eta_{3}$};
			\filldraw[black] (0,0) circle (2pt) node[anchor=south] {$s=0$};
			\filldraw[black] (6.80,2) circle (2pt) node[anchor=south] {$s=1$};
			\filldraw[black] (-6.80,2) circle (2pt) node[anchor=south] {$s=1$};
			\filldraw[black] (0,-4.3) circle (2pt) node[anchor=north] {$s=1$};
		\end{tikzpicture}
	\end{center}
	\caption{Symbolic depiction of the 2nd scenario in Lemma \ref{lemmakappa}: all the arms of the triod tend to the straight position}\label{fig2}
\end{figure}
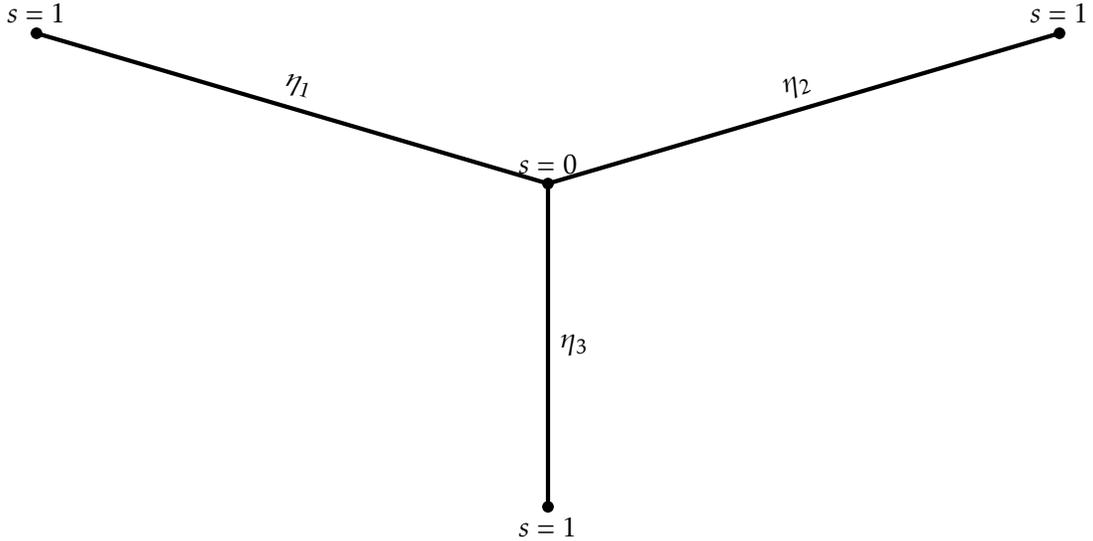
Now we assemble all the ingredients to get the crucial $L^\infty$ bounds for $\psi$ and $\ds \eta$. 
\begin{lem}\label{lemmakappa}
	Given $\delta>0$, the norm $||\psi^{i}||_{L^{\infty}\l\delta,T;L^{\infty}\l\Omega\r\r}$ is uniformly bounded with respect to $\ep$. 
\end{lem}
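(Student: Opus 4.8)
The plan is to reduce the desired $L^\infty$ bound to two ingredients: a uniform bound on the spatial oscillation of $\psi^i$, and a bound on its pointwise minimum over $s$; the latter I would then obtain by contradiction, using the force balance at the junction together with the geometric Lemma \ref{lemmatre}.

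First I would control the oscillation of $\psi^i$ in $s$. Since $\psi^i=G(\ds\ei)$ and \eqref{appproblem} reads $\dt\ei=\ds\psi^i+g$, we have $\ds\psi^i=\dt\ei-g$, and by Lemma \ref{lemmadteta} (equivalently Lemma \ref{lemmaproduct}) the right-hand side is bounded in $L^\infty\l\delta,T;L^2\l\Omega\r\r$ uniformly in $\epsilon$. Hence for a.e. $t\ge\delta$ and all $s_1,s_2\in[0,1]$,
\[
|\psi^i(t,s_1)-\psi^i(t,s_2)|\le\ii|\ds\psi^i(t,s)|\,ds\le\|\ds\psi^i(t,\cdot)\|_{L^2(\Omega)}\le C,
\]
so the oscillation of $\psi^i(t,\cdot)$ is $\le C$ uniformly. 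In particular $\|\psi^i(t,\cdot)\|_{L^\infty}\le m_i(t)+C$, where $m_i(t):=\min_s|\psi^i(t,s)|$; moreover, when $m_i(t)$ is large, all the vectors $\ds\ei(t,s)=F(\psi^i(t,s))$ point in nearly the same direction (within an angle $\lesssim C/m_i(t)$). Thus it suffices to bound $m_i(t)$ uniformly in $\epsilon$.

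Suppose not: passing to a subsequence of parameters and times, let $I$ be the nonempty set of indices for which $m_i\to\infty$ (the junction $\bar\eta$ stays bounded and the arm lengths too, by Lemma \ref{pp1}, so we may extract limits). For $i\in I$ the two observations above show that arm $i$ becomes asymptotically straight, while its length $\ii|F(\psi^i)|\,ds\ge \frac{m_i}{\sqrt{\epsilon+m_i^2}}\to 1$ (using $\epsilon<1$ and Remark \ref{rem0}); hence the chord $|\alpha^i(1)-\bar\eta|$ tends to a value $\ge 1$. For $i\notin I$ the norm $\|\psi^i\|_\infty$ stays bounded. The force-balance boundary condition $\su\psi^i(t,0)=0$ then shows first that $|I|\ge 2$ (a single diverging vector cannot be balanced by bounded ones), and second, writing $\psi^i(t,0)=|\psi^i(t,0)|\hat t^i$ with $\hat t^i\to\frac{\alpha^i(1)-\bar\eta}{|\alpha^i(1)-\bar\eta|}$ for $i\in I$, that $\bar\eta$ converges to a convex combination of $\Upsilon:=\{\alpha^i(1):i\in I\}$, since the bounded contributions of the arms $i\notin I$ are negligible against the diverging weights $|\psi^i(t,0)|$. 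The cases $|I|=2$ and $|I|=3$ correspond precisely to Figures \ref{fig1} and \ref{fig2}. In the limit we therefore obtain a point of the convex hull of $\Upsilon$ whose distance to every point of $\Upsilon$ is $\ge 1$; by Lemma \ref{lemmatre} the smallest enclosing ball of $\Upsilon$ has radius $\ge 1$. But $|\alpha^i(1)|<1$ for every $i$ by Remark \ref{remofradius}, so $\Upsilon$ lies in the ball centered at the origin of radius $\max_i|\alpha^i(1)|<1$, whence its smallest enclosing ball has radius $<1$ — a contradiction. This bounds $m_i$, and combined with the oscillation estimate it yields the claim.

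The main obstacle I expect is the quantitative, $\epsilon$-uniform implementation of this geometric heart: converting ``large tension'' into the rigorous statements ``arm asymptotically straight'' and ``junction asymptotically in the convex hull of $\Upsilon$'', while carefully accounting for the $O(1)$ contributions of the non-taut arms in the force balance. In practice one runs the estimate with the fixed strict gap $1-\max_i|\alpha^i(1)|>0$ absorbing all the $o(1)$ errors, so that the contradiction already occurs once $m_i$ exceeds an explicit (but $\epsilon$-independent) threshold, thereby turning the limiting contradiction above into an effective bound.
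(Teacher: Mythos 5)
Your proposal is essentially sound and reaches the same contradiction as the paper, but by a partially different and in fact more unified route. The paper's proof of Lemma \ref{lemmakappa} also starts from the oscillation bound $\|\ds\psi^i(t,\cdot)\|_{L^2}\leq C$ and also distinguishes the cases of two or three diverging tensions (its Figures \ref{fig1} and \ref{fig2}), but it treats them by different means: the all-arms-taut case via Lemma \ref{lemmatre} applied to the translated points $\tilde p^n_i=\ds\eta^i(t^n,0)$, and the two-taut-arms case by a separate angle analysis (a wedge product with the balance relation gives $\sin\theta_n\to 0$; the sign condition $\sigma^i_{\ep}\geq 0$ from \eqref{sinon} excludes $\theta_n\to 0$; and $|\alpha^1(1)-\alpha^2(1)|<2$, from Remark \ref{remofradius}, excludes $\theta_n\to\pi$). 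You instead normalize the junction balance $\sum_i\psi^i(t^n,0)=0$ by the total diverging tension, pass to the limit, and exhibit the limiting junction as a convex combination of the taut anchors $\{\alpha^i(1):i\in I\}$ lying at distance $\geq 1$ from each of them, so that Lemma \ref{lemmatre} dispatches $|I|=2$ and $|I|=3$ simultaneously; for $|I|=2$ your scheme silently reproduces the paper's angle argument (the weights tend to $1/2$, the case of aligned chord directions is impossible because the normalized sum would then be a unit vector, and the positivity of $\sigma_{\ep}$ enters through $F_{\ep}$ preserving directions). Your derivation of asymptotic straightness is also lighter: you use only the directional oscillation $\lesssim C/m_i$ of $\psi^i$, whereas the paper invokes the product estimate of Lemma \ref{lemmaproduct} to get $\dss\eta^i\to 0$ in $L^2$ and hence the full vector convergence $p^n_i\to\alpha^i(1)$.

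There is, however, one concrete soft spot. Your extraction of the limit junction $\bar\eta_\infty$ requires $\bar\eta(t^n)$ to stay bounded, and the justification ``by Lemma \ref{pp1}'' does not work: that lemma bounds $\|\eta\|_{L^\infty(0,T;L^1(\Omega))}$, which controls the junction only through the arm lengths, and the length of arm $i$ is $\int_0^1|F_{\ep}(\psi^i)|\,ds\leq\ep\|\psi^i\|_{L^1(\Omega)}+1$ --- exactly the quantity you cannot yet control when the tension diverges (recall $|\ds\eta^i|\geq 1$ but not $|\ds\eta^i|\leq C$ a priori, since $\ep|\psi^i|$ may be large). The gap is fixable in two lines: if $I\neq\{1,2,3\}$, pick $i\notin I$; then $\|\psi^i\|_{\infty}\leq C$, hence $|\ds\eta^i|\leq \ep C+1$ and $|\bar\eta(t^n)-\alpha^i(1)|\leq C$, so the junction is bounded. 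If $I=\{1,2,3\}$ and $|\bar\eta(t^n)|\to\infty$, then after a further extraction all three normalized chords converge to the common unit vector $u=-\lim\bar\eta(t^n)/|\bar\eta(t^n)|$, and the normalized balance yields $0=\sum_i\lambda^\infty_i u=u$, a contradiction. Alternatively one can sidestep the issue entirely, as the paper does, by working with the translated points $\tilde p^n_i=\ds\eta^i(t^n,0)$ and the translation invariance of the smallest enclosing ball, which never requires $\bar\eta(t^n)$ to converge. Apart from this (and the routine selection of a full-measure set of good times, which the paper carries out via the set $\mathfrak T$ and you gloss over), your argument is correct, and your closing remark about an effective threshold is unnecessary for the statement as given.
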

\begin{proof}
	
	 From now on, we do not omit the subscript $\ep$. However, in this proof we decided to swap the  sub- and superindices for the sake of convenience and readability. 
	
	Step 1. We argue by contradiction. Assume that there is a sequence $\ep^{n}\rightarrow 0$ such that  $$||\psi^{\ep^n}_{1}||_{L^{\infty}\l\delta,T;L^{\infty}\l\Omega\r\r}\to +\infty.$$ Here, without loss of generality, we have chosen the generic $i$ to be equal to $1$.  By the regularity of $\partial_s \psi^{\ep^n}$ and $\dss\eta^{\ep^{n}}$ there exists a set $\mathfrak T_n$ of full measure in $[\delta,T]$ such that $\psi^{\ep^n}_{i}(t)$ and $\eta^{\ep^{n}}_i(t)$ are $C^1$-smooth in $ \overline \Omega$ whereas $\dss\eta^{\ep^{n}}_i(t)\in L^2(\Omega)$ for every $i$ and every $t\in \mathfrak T_n$. Furthermore, by Lemma \ref{lemmaproduct} without loss of generality we can assume that $\ds\psi^{\ep^n}_{i}(t)$ and $|\psi_{i}^{\ep^{n}}\l t,\cdot \r||\dss\eta^{\ep^{n}}_{i}\l t,\cdot\r-\ep^n\ds\psi_{i}^{\ep^{n}}\l t,\cdot \r|$ are bounded in $L^2(\Omega)$ uniformly w.r.t. $n$ and $t\in \mathfrak T_n$. Let $\mathfrak T:= \cap_{n\in \mathbb{N}}\mathfrak T_n$.  Then there is a sequence  $(t^n, s^{n})\in \mathfrak T\times \Omega$ such that $|\psi_{1}^{\ep^n}\l t^{n},s^{n}\r|\rightarrow +\infty$ as $n \to \infty$.   Thus,
	$$\psi_{1}^{\ep^{n}}\l t^n,s\r=\underbrace{\psi_{1}^{\ep^{n}}\l t^n,s^n\r}_{\rightarrow +\infty}+\underbrace{\int_{s^n}^{s}\d_{\xi}\psi^{\ep^{n}}_{1}\l t^{n},\xi\r\d \xi}_{\leq C}$$ when $n \to \infty$. Accordingly, $|\psi_{1}^{\ep^n}(t^n)|\rightarrow +\infty$ uniformly in $s$.  
	
	Step 2. 
	By the boundary conditions,  \begin{equation} \label{bkk} \su \psi_{i}^{\ep^{n}}(t^n,0) =0. \end{equation} 
	By the previous step,  $|\psi_{1}^{\ep^n}(t^n,0)|\rightarrow +\infty$. 
	Hence we have two possible scenarios symbolically pictured in Figures \ref{fig1} and \ref{fig2}, respectively.  The first option is $|\psi_{2}^{\ep^{n}}(t^n,0)|\rightarrow +\infty$ and $|\psi_{3}^{\ep^{n}}(t^n,0)|\leq C$ as $n \to +\infty$ (up to swapping the second and the third arms). The second one is $|\psi_{2}^{\ep^{n}}(t^n,0)|\rightarrow +\infty$ and $|\psi_{3}^{\ep^{n}}(t^n,0)|\rightarrow +\infty$ as $n \to +\infty$. 
	
	Step 3. We start by examining the second scenario. An argument similar to the one of Step 1 shows that $|\psi_{i}^{\ep^n}(t^n)|\rightarrow +\infty$ uniformly in $s$, $i=1,2,3$.  Since $t^n\in \mathfrak T$, we know that $$|\psi_{i}^{\ep^{n}}\l t^{n},\cdot \r||\dss\eta^{\ep^{n}}_{i}\l t^{n},\cdot\r-\ep\ds\psi_{i}^{\ep^{n}}\l t^{n},\cdot \r|$$ is uniformly bounded in $L^{2}\l \Omega\r$. 
	Hence, $$|\dss\eta^{\ep^{n}}_{i}\l t^{n},\cdot\r-\ep^n\ds\psi_{i}^{\ep^{n}}\l t^{n},\cdot \r|\rightarrow 0$$ in $L^{2}\l\Omega\r$ as $n \to +\infty$. On the other hand, $\ds\psi_{i}^{\ep^{n}}\l t^{n}\r $ is uniformly bounded in $L^{2}\l \Omega\r$, whence $$|\ep^n\ds\psi_{i}^{\ep^{n}}\l t^{n} \r|\rightarrow 0$$ in $L^{2}\l \Omega\r$. 
	We conclude that  $|\dss\eta^{\ep^{n}}_{i}\l t^{n},\cdot\r|\rightarrow 0$ in $L^2\l\Omega\r$ as $n \to +\infty$. By Remark \ref{rem0}, $|\psi_{i}^{\ep^{n}}\l t^{n},\cdot \r|\geq 1$ implies $|\ds\eta_{i}^{\ep^{n}}\l t^{n},\cdot \r|\geq 1$ (assuming $n$ to be large enough). 
	
	Step 4. The idea now is to compare the triangle formed by the points $p_i^n:=\eta^{\ep^{n}}_{i}\l t^{n},0\r+\ds \eta^{\ep^{n}}_{i}\l t^{n},0\r$ with the fixed triangle\footnote{Both triangles can be degenerate.} formed by $\eta^{\ep^{n}}_{i}\l t^{n},1\r=\alpha_{i}\l 1\r$, $i=1,2,3$. Observe that  \begin{align*}
		&|\ds\eta_{i}^{\ep^{n}}\l t^{n},0\r-\ds\eta_{i}^{\ep^{n}}\l  t^{n},\xi\r|
		=\left| \int_{0}^{\xi}\dss\eta_{i}^{\ep^{n}}\l  t^{n}\r\, ds\right|\\
		&\leq \int_{0}^{\xi}|\dss\eta_{i}^{\ep^{n}}\l  t^{n}\r|\,ds
		\leq \sqrt{\int_{0}^{1}|\dss\eta_{i}^{\ep^{n}}\l  t^{n}\r|^2\,ds}\rightarrow 0~\text{uniformly}\ \text{in}\ \xi\ \text{as}~n\rightarrow\infty.
	\end{align*}
Hence, \begin{align*}
	&|p_i^n-\alpha_i(1)|= |\eta^{\ep^{n}}_{i}\l t^{n},0\r-\eta^{\ep^{n}}_{i}\l t^{n},1\r+\ds \eta^{\ep^{n}}_{i}\l t^{n},0\r|\\
	&=\Big|\int_{0}^{1} \ds \eta^{\ep^{n}}_{i}\l t^{n},0\r - \ds \eta^{\ep^{n}}_{i}\l t^{n},s\r \, ds \Big|
	\rightarrow 0~\text{as}~n\rightarrow\infty.
\end{align*}
It follows from our assumptions (Remark \ref{remofradius}) that the radius of the smallest enclosing ball of the three points $\alpha_{i}\l 1\r$ is less than $1$. Since the radius of the smallest enclosing ball is a continuous function of the points of a set, it follows that the radius of the smallest enclosing ball of the three points $p_i^n$ is less than $1$ for $n$ sufficiently large. Since the junction point $\eta^{\ep^{n}}_{i}\l t^{n},0\r$ does not depend on $i$, the radius of the smallest enclosing ball of the three points $\tilde p_i^n:= \ds \eta^{\ep^{n}}_{i}\l t^{n},0\r$ is the same as the previous one. By Step 3, $|\tilde p_i^n|\geq 1.$ Moreover, since $\su \psi_{i}^{\ep^{n}}(t^n,0) =0$ and $\tilde p_i^n=F_{\ep^{n}}( \psi_{i}^{\ep^{n}}(t^n,0))$, we conclude that the convex hull of $\{\tilde p_i^n\}$ contains the origin. We arrive at a contradiction because by Lemma \ref{lemmatre} the radius of the smallest enclosing ball of $\{\tilde p_i^n\}$ must be greater than or equal to $1$.

Step 5. We now study the first scenario. Define $p_i^n$ and $\tilde p_i^n$ as in Step 4. The plan is to look at the angle $\theta_n$ between the position vectors of $\tilde p_1^n$ and $\tilde p_2^n$  and to obtain a contradiction from that. 

We first show that $\theta_n$ cannot tend to $\pi$. Indeed, mimicking the arguments of Steps 3 and 4, we can prove that for $i=1,2$ one has $|\ds\eta_{i}^{\ep^{n}}\l t^{n},\cdot \r|\geq 1$ with $n$ large enough,  $|\dss\eta^{\ep^{n}}_{i}\l t^{n},\cdot\r|\rightarrow 0$ in $L^2\l\Omega\r$ and  $$|p_i^n-\alpha_i(1)| \rightarrow 0~\text{as}~n\rightarrow\infty.$$ Hence, $$|\tilde p_1^n-\tilde p_2^n|= |p_1^n-p_2^n|\to |\alpha_1(1)-\alpha_2(1)|<2.$$ Since we have $|\tilde p_1^n|\geq 1 , |\tilde p_2^n|\geq 1$, the angle  $\theta_n$ cannot converge to $\pi$. 

Now take the wedge product of relation \eqref{bkk} with the vector $$\frac {1}{|\ds\eta_{1}^{\ep^{n}}\l t^{n},0 \r||\psi_{2}^{\ep^{n}}(t^n,0) |}\ds\eta_{1}^{\ep^{n}}\l t^{n},0 \r$$ to obtain $$\frac{\psi_{2}^{\ep^{n}}(t^n,0)}{|\psi_{2}^{\ep^{n}}(t^n,0)|} \wedge\frac {\ds\eta_{1}^{\ep^{n}}\l t^{n},0 \r}{|\ds\eta_{1}^{\ep^{n}}\l t^{n},0 \r|}+\frac{\psi_{3}^{\ep^{n}}(t^n,0)}{|\psi_{2}^{\ep^{n}}(t^n,0)|} \wedge\frac {\ds\eta_{1}^{\ep^{n}}\l t^{n},0 \r}{|\ds\eta_{1}^{\ep^{n}}\l t^{n},0 \r|}=0.$$ Since $|\psi_{2}^{\ep^{n}}(t^n,0)|\rightarrow +\infty$ and $|\psi_{3}^{\ep^{n}}(t^n,0)|\leq C$, the second term converges to $0$. Consequently, $$|\sin \theta_n|=\Big|\frac {\ds\eta_{2}^{\ep^{n}}\l t^{n},0 \r}{|\ds\eta_{2}^{\ep^{n}}\l t^{n},0 \r|} \wedge\frac {\ds\eta_{1}^{\ep^{n}}\l t^{n},0 \r}{|\ds\eta_{1}^{\ep^{n}}\l t^{n},0 \r|}\Big|= \Big|\frac{\psi_{2}^{\ep^{n}}(t^n,0)}{|\psi_{2}^{\ep^{n}}(t^n,0)|} \wedge\frac {\ds\eta_{1}^{\ep^{n}}\l t^{n},0 \r}{|\ds\eta_{1}^{\ep^{n}}\l t^{n},0 \r|}\Big|\to 0$$ as $n \to \infty$. 

To obtain a contradiction, it remains to observe that $\theta_n$ cannot tend to $0$. Indeed, taking the scalar product of relation \eqref{bkk} with $$\frac {1}{|\ds\eta_{1}^{\ep^{n}}\l t^{n},0 \r||\psi_{2}^{\ep^{n}}(t^n,0) |}\ds\eta_{1}^{\ep^{n}}\l t^{n},0 \r$$ we get \begin{equation} \frac{\psi_{1}^{\ep^{n}}(t^n,0)}{|\psi_{2}^{\ep^{n}}(t^n,0)|} \cdot\frac {\ds\eta_{1}^{\ep^{n}}\l t^{n},0 \r}{|\ds\eta_{1}^{\ep^{n}}\l t^{n},0 \r|}+\frac{\psi_{2}^{\ep^{n}}(t^n,0)}{|\psi_{2}^{\ep^{n}}(t^n,0)|} \cdot\frac {\ds\eta_{1}^{\ep^{n}}\l t^{n},0 \r}{|\ds\eta_{1}^{\ep^{n}}\l t^{n},0 \r|}+\frac{\psi_{3}^{\ep^{n}}(t^n,0)}{|\psi_{2}^{\ep^{n}}(t^n,0)|} \cdot\frac {\ds\eta_{1}^{\ep^{n}}\l t^{n},0 \r}{|\ds\eta_{1}^{\ep^{n}}\l t^{n},0 \r|}=0.\end{equation} 
The first term is equal to $\frac{\sigma_{1}^{\ep^{n}}}{|\psi_{2}^{\ep^{n}}(t^n,0)||\ds\eta_{1}^{\ep^{n}}\l t^{n},0 \r|}\geq 0$ by \eqref{defeq} and \eqref{sinon}. The third term converges to $0$. Accordingly, the second term, which is equal to $\cos \theta _n$, cannot tend to $1$. 
\end{proof}

\begin{cor}\label{lemmakappacor}
	Given $\delta>0$, the norm $||\ei_{\ep^n}||_{L^{\infty}\l\delta,T;W^{1,\infty}\l\Omega\r\r}$ is uniformly bounded with respect to $\ep$. 
\end{cor}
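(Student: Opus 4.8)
The plan is to read off this bound directly from the uniform $L^\infty$ control on the tension variables $\ki$ established in Lemma \ref{lemmakappa}, using the explicit structure of the map $F\epp$ that links $\ds\ei$ to $\ki$. Recall from \eqref{defeq} that $\ki=G\epp(\ds\ei)$, and since $G\epp=(F\epp)^{-1}$ this is equivalent to $\ds\ei=F\epp(\ki)$. By Lemma \ref{lemmakappa} there is a constant $M$, independent of $\ep$, such that $|\ki(t,s)|\leq M$ for a.e. $(t,s)\in(\delta,T)\times\Omega$ and each $i$.

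First I would bound $\ds\ei$ pointwise. Using the explicit form \eqref{dff}, for any $\k\in\rm^d$ one has $|F\epp(\k)|=\ep|\k|+\frac{|\k|}{\sqrt{\ep+|\k|^2}}$. Since $\ep\in(0,1)$, the first summand is at most $\ep M\leq M$, while the second summand is strictly less than $1$ for every $\k$ because $\ep>0$. Hence
\begin{equation*}
|\ds\ei(t,s)|=|F\epp(\ki(t,s))|\leq \ep M+1\leq M+1
\end{equation*}
for a.e. $(t,s)$, uniformly in $\ep$. This already yields the desired uniform bound on $\ds\ei$ in $L^{\infty}\l\delta,T;L^{\infty}\l\Omega\r\r$. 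The only subtlety worth noting is that this uniformity relies crucially on $\ep<1$, so that the linear term $\ep\k$ in $F\epp$ does not spoil the estimate.

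It then remains to control $\ei$ itself in $L^\infty$. Here I would invoke the fixed boundary condition $\ei\l t,1\r=\alpha\iii\l 1\r$ from \eqref{appib} together with the fundamental theorem of calculus: for a.e.\ $t\in(\delta,T)$ and every $s\in\Omega$,
\begin{equation*}
\ei(t,s)=\alpha\iii\l 1\r-\int_{s}^{1}\d_{\xi}\ei(t,\xi)\,d\xi,
\end{equation*}
so that $|\ei(t,s)|\leq |\alpha\iii\l 1\r|+(M+1)$ uniformly in $\ep$. Combining the two estimates gives the claimed uniform bound on $\|\ei\|_{L^{\infty}\l\delta,T;W^{1,\infty}\l\Omega\r\r}$. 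There is essentially no genuine obstacle at this stage, since all the difficulty has been absorbed into the $L^\infty$ bound for the tension in Lemma \ref{lemmakappa}; this corollary merely transfers that bound back to $\ds\ei$ and $\ei$ through the elementary properties of $F\epp$.
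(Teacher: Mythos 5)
Your proof is correct, and its main step is exactly the paper's: write $\ds\ei=F\epp(\ki)$ and transfer the uniform $L^\infty$ bound on $\ki$ from Lemma \ref{lemmakappa} through the explicit form of $F\epp$, with the uniformity resting on the boundedness of the parameters $\ep$ (your observation that $\ep|\ki|\leq M$ and $|\ki|/\sqrt{\ep+|\ki|^2}<1$ is precisely what is behind the paper's one-line assertion). The only divergence is in how you anchor the bound on $\ei$ itself: you integrate from the fixed endpoint, using $\ei\l t,1\r=\alpha\iii\l 1\r$ and the fundamental theorem of calculus (legitimate here, since $\ei(t,\cdot)\in AC^2\left(\overline\Omega\right)$ by Corollary \ref{cor:sol}), whereas the paper instead invokes the uniform $L^{\infty}\l\delta,T;L^1\l\Omega\r\r$ bound on $\eta$ from the energy estimate (Lemma \ref{pp1}) combined with the mean value theorem. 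The two anchorings are interchangeable; yours is slightly more economical in that it avoids any appeal to Lemma \ref{pp1} and uses only data built into the approximating problem, while the paper's version has the minor advantage of not relying on the boundary behaviour and thus works verbatim wherever an $L^1$ bound is available.
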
\begin{proof} Since $\ds\ei_{\ep^n}=F_{\ep^n}(\psi_{\ep^n}^{i})$ and the sequence $\{\epsilon_n\}$ is bounded, Lemma \ref{lemmakappa} yields a uniform $L^\infty$ bound for $\ds\ei$. By Lemma \ref{pp1}, $||\ei||_{L^{\infty}\l\delta,T;L^1\l\Omega\r\r}$ is also uniformly bounded with respect to $\ep$, and the claim follows by the mean value theorem. \end{proof}

\begin{lem}\label{sigmaspace}
		Given $\delta>0$, the norm $\|\sigma^{i}_{\ep^n}\|_{L^\infty \l\delta,T;H^{1}\l\Omega \r\r}$ is bounded uniformly in $\ep$.
\end{lem}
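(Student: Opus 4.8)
The plan is to control separately the two contributions to the $H^1$ norm, namely $\si$ itself and its spatial derivative $\ds\si$, both in $L^\infty\l\delta,T;L^2\l\Omega\r\r$ uniformly in $\ep$ (along the subsequence $\ep^n$). Recall from \eqref{defeq} that $\si=\ki\cdot\ds\ei$, where $\ki=G\l\ds\ei\r$ and $\ds\ei=F\l\ki\r$. The bound on $\si$ itself is immediate: by Lemma \ref{lemmakappa} the functions $\ki$ are uniformly bounded in $L^{\infty}\l\delta,T;L^{\infty}\l\Omega\r\r$, and by Corollary \ref{lemmakappacor} so are the $\ds\ei$; hence the product $\si$ is uniformly bounded in $L^{\infty}\l\delta,T;L^{\infty}\l\Omega\r\r$ and a fortiori in $L^{\infty}\l\delta,T;L^{2}\l\Omega\r\r$.

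The substance of the argument is the estimate on $\ds\si$. Differentiating the product, I would write
$$\ds\si=\ds\ki\cdot\ds\ei+\ki\cdot\dss\ei.$$
The first term is harmless: $\ds\ki$ is uniformly bounded in $L^{\infty}\l\delta,T;L^{2}\l\Omega\r\r$ by Lemma \ref{lemmaproduct}, while $\ds\ei$ is uniformly bounded in $L^{\infty}$ by Corollary \ref{lemmakappacor}, so their product is uniformly bounded in $L^{\infty}\l\delta,T;L^{2}\l\Omega\r\r$.

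The main obstacle is the second term $\ki\cdot\dss\ei$, because $\dss\ei$ is not controlled on its own uniformly in $\ep$; only the weighted combination furnished by Lemma \ref{lemmaproduct} is. I would therefore split
$$\ki\cdot\dss\ei=\ki\cdot\l\dss\ei-\ep\ds\ki\r+\ep\,\ki\cdot\ds\ki.$$
For the first piece, Cauchy--Schwarz gives $|\ki\cdot\l\dss\ei-\ep\ds\ki\r|\leq|\ki|\,|\dss\ei-\ep\ds\ki|$, which is precisely the quantity that Lemma \ref{lemmaproduct} bounds uniformly in $L^{\infty}\l\delta,T;L^{2}\l\Omega\r\r$. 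For the second piece, since $\ep\in(0,1)$ and $\ki$ is uniformly bounded in $L^{\infty}$ by Lemma \ref{lemmakappa}, one has $|\ep\,\ki\cdot\ds\ki|\leq|\ki|\,|\ds\ki|\leq C|\ds\ki|$, which is uniformly bounded in $L^{\infty}\l\delta,T;L^{2}\l\Omega\r\r$ again by Lemma \ref{lemmaproduct}. Collecting the three contributions yields a uniform $L^{\infty}\l\delta,T;L^{2}\l\Omega\r\r$ bound for $\ds\si$, which together with the bound on $\si$ establishes the claim.

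As a sanity check and an alternative route, one may observe that $\si=\ki\cdot F\l\ki\r=\ep|\ki|^2+\tfrac{|\ki|^2}{\sqrt{\ep+|\ki|^2}}$ depends only on $|\ki|^2$, so that $\ds\si=h\epp\l|\ki|^2\r\,\ki\cdot\ds\ki$ for an explicit scalar factor $h\epp$; a short computation shows $|\ki|\,h\epp\l|\ki|^2\r\leq C$ uniformly in $\ep$ on the range where $|\ki|$ is bounded, giving $|\ds\si|\leq C|\ds\ki|$ directly. I expect the splitting argument above, which leans on the estimate of Lemma \ref{lemmaproduct} that was tailored for exactly this purpose, to be the cleanest path, with the handling of $\ki\cdot\dss\ei$ being the only genuinely delicate step.
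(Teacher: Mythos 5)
Your proof is correct, and every input you invoke is available in the paper in exactly the form you use it; the skeleton (bound $\sigma^i$ in $L^\infty$ via $\sigma^i=\psi^i\cdot\ds\eta^i$ with Lemma \ref{lemmakappa} and Corollary \ref{lemmakappacor}, then differentiate and treat $\ds\psi^i\cdot\ds\eta^i$ and $\psi^i\cdot\dss\eta^i$ separately) is the same as the paper's, but your handling of the delicate term $\psi^i\cdot\dss\eta^i$ is genuinely different. The paper does not split off $\ep\ds\psi^i$: it dominates the \emph{full} product $|\psi^i|\,|\dss\eta^i|$ pointwise by $C|\ds\psi^i|$, using the explicit lower bound $\lambda_\ep(\tau)\geq |G_\ep(\tau)|/(\ep|G_\ep(\tau)|+1)$ (see \eqref{lambdainequality1}, a consequence of \eqref{lambdaineq}) together with $\lambda_\ep(\tau)|\xi|\leq|\nabla G_\ep(\tau)\xi|$ from \eqref{lambdainequality}, which yields $|G_\ep(\ds\eta^i)|\,|\dss\eta^i|\leq(\ep|\psi^i|+1)\,|\nabla G_\ep(\ds\eta^i)\dss\eta^i|=(\ep|\psi^i|+1)\,|\ds\psi^i|\leq C|\ds\psi^i|$ once Lemma \ref{lemmakappa} controls $|\psi^i|$. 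You instead use the decomposition $\psi^i\cdot\dss\eta^i=\psi^i\cdot(\dss\eta^i-\ep\ds\psi^i)+\ep\,\psi^i\cdot\ds\psi^i$ and make a second appeal to the weighted product bound of Lemma \ref{lemmaproduct}, which in the paper is consumed only inside the proof of Lemma \ref{lemmakappa}. Both routes are valid and rest on the same underlying structural fact (the term $\psi^i(\ds\psi^i\cdot\psi^i)/(\ep+|\psi^i|^2)^{3/2}$ in $\dss\eta^i$ is the only unbounded piece, and it is parallel to $\psi^i$); the paper's version is marginally stronger, since it controls $|\psi^i|\,|\dss\eta^i|$ itself rather than the combination with $\ep\ds\psi^i$ subtracted, whereas yours is more economical, recycling an estimate tailored to exactly this cancellation and avoiding any further reference to the eigenvalue bounds \eqref{lambdainequality}--\eqref{lambdaineq}. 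Your ``sanity check'' route is also sound: with $r=|\psi^i|^2$ one gets $h_\ep(r)=2\bigl(\ep+(\ep+r/2)(\ep+r)^{-3/2}\bigr)$ and $\sqrt r\,(\ep+r/2)(\ep+r)^{-3/2}\leq\sqrt r\,(\ep+r)^{-1/2}\leq 1$, so $|\psi^i|\,h_\ep(|\psi^i|^2)\leq C$ on the range where $|\psi^i|$ is bounded; this is closest in spirit to the paper's explicit-formula style, though the paper does not take it.
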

\begin{proof}
	In view of Lemma \ref{lemmakappa} and Corollary \ref{lemmakappacor}, the  $L^{\infty}\l\delta,T;L^{\infty}\l \Omega\r\r$-bound for $\sigma$ immediately follows  from the equality $\sigma^{i}=\ds\ei\cdot\psi^{i}$. Differentiating this equality w.r.t. $s$ we obtain $$\ds\sigma^{i}=\ds\psi^{i}\cdot\ds\ei+\psi^{i}\cdot\dss\ei.$$ We estimate the two terms on the right-hand side separately. Firstly, a uniform $L^\infty\l\delta,T;L^{2}\l\Omega \r\r$ bound for $\ds\psi^{i}$  has been already established, cf. Lemma \ref{lemmaproduct}. This together with Corollary \ref{lemmakappacor} implies the uniform boundedness of $\ds\psi^{i}\cdot\ds\ei$ in $L^\infty\l\delta,T;L^{2}\l\Omega \r\r$.
	
	Now, we estimate $\psi^{i}\cdot\dss\ei$. From the explicit expression of $\lambda_{\ep^n}$ in \eqref{lambdaineq}, for $\tau\in\rm^d$ we have 
	\begin{equation}\label{lambdainequality1}
		\lambda_{\ep^n}\l \tau\r=\frac{\sqrt{\ep^n+|G_{\ep^n}\l\tau\r|^2}}{\ep^n\sqrt{\ep^n+|G_{\ep^n}\l\tau\r|^2}+1}\geq \frac{|G_{\ep^n}\l\tau\r|}{\ep^n|G_{\ep^n}\l\tau\r |+1}. 
	\end{equation}
	Thus,
	\begin{align*}
		|G_{\ep^n}\l\ds\ei\r||\dss\ei|&\leq\l\ep^n|G_{\ep^n}\l\ds\ei\r|+1\r|\lambda_{\ep^n}\dss\ei|\\&\leq\l\ep^n|\psi^i|+1\r|\nabla G_{\ep^n}\l\ds\ei\r\dss\ei|. 
	\end{align*}
	By Lemma \ref{lemmakappa}, $|\psi^i|$ is uniformly bounded in $L^{\infty}\l\delta,T;L^{\infty}\l \Omega\r\r$, whence
	\begin{equation*}\label{nablainequality} |\psi^{i}\cdot\dss\ei|\leq |G_{\ep^n}\l\ds\ei\r||\dss\ei|\leq C|\nabla G_{\ep^n}\l\ds\ei\r\dss\ei|=C|\ds \psi^i|.
	\end{equation*}
	Since the right-hand side is uniformly bounded in $L^\infty\l\delta,T;L^{2}\l\Omega \r\r$, so is the left-hand side and, consequently, the spatial derivative $\ds\sigma^i$ itself. 
\end{proof}

\section{Existence of generalized solutions} \label{s:sec5}

We are now at the position to define generalized solutions to the original problem  \eqref{nwgrd}, \eqref{boundary},  \eqref{initial} and to prove their existence. 

\begin{defn}[Generalized solution]\label{def1}
	Given initial data $\alpha^{i}\l s\r\in W^{1,\infty}\l\Omega\r^d$ as in Remark \ref{remofradius}, we call a pair $\l\eta^{i},\sigma^{i}\r$ a generalized solution to \eqref{nwgrd}, \eqref{boundary},  \eqref{initial} in $\mathfrak{Q}_{\infty}$ if\begin{enumerate}[(i)]
		\item \begin{itemize}\item[-]$\eta^{i}\in L^{\infty}_{loc}\l(0,\infty;W^{1,\infty}\l\Omega\r\r^d\cap C_{loc}\l(0,\infty);C\l\overline\Omega\r\r^d\cap AC^2_{loc}\l[0,\infty);L^{2}\l\Omega\r\r^d$, 
			\item[-] $\dt\eta^{i}\in L^{\infty}_{loc}\l\l 0,\infty\r;L^{2}\l\Omega\r\r^d\cap L^{2}_{loc}\l[0,\infty);L^{2}\l\Omega\r\r^d$, 
			\item[-]$\sigma^{i}\in L^{\infty}_{loc}\l\l0,\infty\r;AC^{2}\l\Omega\r\r$,
			\item[-] $\sigma^{i}\ds\eta^{i}\in L^{\infty}_{loc}\l\l 0,\infty\r;AC^{2}\l\Omega\r\r^d$.
			
		\end{itemize}
		\item Each pair $\l\eta^{i},\sigma^{i}\r$ satisfies for a.e.  $\l t,s\r\in \mathfrak{Q}_{\infty}$
		\begin{align}
			&\dt\eta^{i}\l t,s\r=\ds\l\sigma^{i}\l t,s\r\ds\eta^{i}\l t,s\r\r+g,\label{eq35}\\
			&\sigma^{i}\l t,s\r\l|\ds\eta^{i}\l t,s\r|^2-1\r=0,\label{eq36}\\
			&|\ds\eta^{i}\l t,s\r|\leq 1\label{eq37},
		\end{align}
		as well as the initial conditions  $$\eta^{i}\l 0,s\r=\alpha^{i}\l s\r$$ and the boundary conditions\begin{align*}
			&\e\l t,0\r=\ee\l t,0\r=\eee\l t,0\r,\\
			&\eta^{i}\l t,1\r=\alpha\iii\l 1\r,\\
			&\su \sigma^i\l t,0\r\ds\eta^{i}\l t,0\r=0.  
		\end{align*}
		\item The solutions $\eta^{i}$ satisfy the energy dissipation inequality
		\begin{equation}\label{energydiss}
			\su	\int_{\Omega}|\dt\eta^{i}\l t,s\r|^2~ds\leq \su \int_{\Omega}g\cdot\dt\eta^{i}\l t,s\r~ds
		\end{equation}
		for a.e.  $t\in\l 0,\infty\r$.
	\end{enumerate}
	
\end{defn}

\begin{rem}[Discussion of Definition \ref{def1}] \label{nonconv} Note that \eqref{eq36}, \eqref{eq37} is a minor relaxation of the non-convex constraint \begin{equation} \label{e:str} |\ds\eta^{i}\l t,s\r|=1.\end{equation} However, this is not a banal convexification of the constraint since \eqref{eq36} is still not convex. The new constraints \eqref{eq36}, \eqref{eq37} naturally appear from the $(\eta,\sigma,\psi)$-formulation \eqref{nts}, cf. \eqref{eq29} in the proof below. Moreover, if a generalized (in the sense of Definition \ref{def1}) solution $(\eta,\sigma)$  is $C^2$-smooth, then it automatically satisfies the strong constraint \eqref{e:str}. This claim can be shown by following the lines of \cite[Remark 4.2]{shidim1}, \cite[Remark 3.20]{shidim2}. As in \cite{shidim1}, our generalized solutions are, generally speaking, not unique. Yet this has nothing to do with the fact that we slightly relaxed the constraint \eqref{e:str}. As a matter of fact, non-uniqueness can persist even if the strong constraint \eqref{e:str} is imposed, cf. \cite[Remark 6.5]{shidim1}.  Finally, we emphasize that  \eqref{energydiss} is not a direct consequence of \eqref{eq35}, \eqref{eq36}, \eqref{eq37}. \end{rem}

For convenience, we first pass to the limit on finite time intervals. In what follows, we use the shortcut $\mathfrak Q^{*}_{T}:=(\delta,T)\times\Omega$
.  
\begin{prop}\label{ppmain}
	Fix $T>0$ and a small $\delta>0$. Let $\eta\epp$ be a solution to \eqref{appproblem} in $\mathfrak Q_{T}$ with the initial/boundary conditions \eqref{appib} as constructed in Section \ref{evolsec}. Let $\left(\psi^{i},\si\right)$ be defined as in \eqref{defeq}.  Then (up to selecting a subsequence $\ep^n$) there exists a limit $(\eta^i,\sigma^i, \psi^i)$ such that as
$\ep\rightarrow 0$ we have 
	\begin{enumerate}[]
		\item $\ei\epp\rightarrow\ei$ weakly$^{*}$ in $L^{\infty}\left(\delta,T;W^{1,\infty}\left(\Omega\right)\right)^d$, strongly in $C\left(\overline{\mathfrak Q_{T}^{*}}\right)^d$  and weakly in $L^2\left(\mathfrak Q_{T}\right)^d$,
		\item$\dt\ei\epp\rightarrow\dt\ei$ weakly-$*$ in $L^{\infty}\left(\delta,T;L^{2}\left(\Omega\right)\right)^d$ and weakly in $L^2\left(\mathfrak Q_{T}\right)^d$,
		\item $\si\epp\rightarrow\si$ weakly-* in $L^{\infty}\left(\delta,T;H^{1}\left(\Omega\right)\right)$,
		\item $\psi\iii\epp\rightarrow\psi\iii$ weakly-* in $L^{\infty}\left(\delta,T;H^{1}\left(\Omega\right)\right)$.
	\end{enumerate}
	
	The limit satisfies the relation $$\psi\iii=\si\ds\ei\in L^{\infty}\left(\delta,T;H^{1}\left(\Omega\right)\right)$$ 
	and solves \eqref{nwgrd}-\eqref{boundary} in $\mathfrak Q_{T}^{*}$ in the sense that
	\begin{align*}
		&\dt\ei=\ds\left(\si\ds\ei\right)+g\  \text{a.e. in}~\mathfrak Q_{T}^{*},\\
		&\si\left(|\ds\ei|^2-1\right)=0 ~ \text{a.e. in}~\mathfrak Q_{T}^{*},\\
		&\ei\l t,1\r=\alpha\iii\l 1\r,\\
		&\e\l t,0\r=\ee\l t,0\r=\eee\l t,0\r ,\\
		&\su\psi\iii=0~\text{at}~ s=0 ~\text{ for a.e.}~t\in (\delta, T).
	\end{align*}
	\begin{rem} At this stage we don't discuss the validity of the initial condition 
		$\ei\left(0,s\right)=\alpha^{i}\left( s\right )$ that is postponed until Remark \ref{imnit}. \end{rem}
\end{prop}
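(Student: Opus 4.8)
The plan is to extract a limit from the family $\{(\eta_\epsilon^i,\psi_\epsilon^i,\sigma_\epsilon^i)\}$ using the uniform bounds of Section \ref{s:sec4}, and then to identify the nonlinear limiting relations by a compensated-compactness argument. First I would collect the weak compactness. By Lemmas \ref{lemmadteta}, \ref{lemmakappa}, \ref{sigmaspace}, Corollaries \ref{lemmakappacor}, \ref{lemmaeasycor} and Lemma \ref{pp1}, the families $\{\eta_\epsilon^i\}$, $\{\partial_t\eta_\epsilon^i\}$, $\{\sigma_\epsilon^i\}$, $\{\psi_\epsilon^i\}$ are bounded in the reflexive/dual spaces of items (1)--(4), so Banach--Alaoglu yields (along a subsequence $\epsilon^n\to0$) the stated weak-$*$ limits; the global-in-time weak $L^2(\mathfrak Q_T)$ limits of $\eta_\epsilon^i$ and $\partial_t\eta_\epsilon^i$ come from the $\epsilon$-uniform bounds valid on all of $(0,T)$ (Corollary \ref{lemmaeasycor} and Lemma \ref{pp1}). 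To upgrade $\eta_\epsilon^i$ to strong convergence in $C(\overline{\mathfrak Q_T^*})$ I would apply the Aubin--Lions--Simon lemma with the embeddings $W^{1,\infty}(\Omega)\hookrightarrow\hookrightarrow C(\overline\Omega)\hookrightarrow L^2(\Omega)$, using the uniform bound on $\eta_\epsilon^i$ in $L^\infty(\delta,T;W^{1,\infty})$ (Corollary \ref{lemmakappacor}) and on $\partial_t\eta_\epsilon^i$ in $L^\infty(\delta,T;L^2)$ (Lemma \ref{lemmadteta}).

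Passing to the limit in the \emph{linear} equation $\partial_t\eta_\epsilon^i=\partial_s\psi_\epsilon^i+g$ is then immediate from the weak convergences (note $\partial_s\psi_\epsilon^i\rightharpoonup\partial_s\psi^i$ weak-$*$ in $L^\infty(\delta,T;L^2)$). The boundary conditions pass to the limit using the uniform convergence of $\eta_\epsilon^i$ at $s=0$ and $s=1$, and the weak-$*$ convergence of the traces $\psi_\epsilon^i(\cdot,0)$ (evaluation at $0$ being a bounded functional on $H^1(\Omega)$) to secure $\sum_i\psi^i(t,0)=0$ for a.e.\ $t$.

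The heart of the proof, and the main obstacle, is the identification of the nonlinear limits, since neither $\psi_\epsilon^i$ nor $\partial_s\eta_\epsilon^i$ converges strongly (no $\epsilon$-uniform control of $\partial_t\psi_\epsilon^i$ or of $\partial_{ss}\eta_\epsilon^i$ is available). I would circumvent this with the div--curl lemma on the two-dimensional space-time domain $\mathfrak Q_T^*\subset\mathbb{R}^2$. For each Cartesian component $k$ the field $\nabla_{(t,s)}(\eta_\epsilon^i)_k=((\partial_t\eta_\epsilon^i)_k,(\partial_s\eta_\epsilon^i)_k)$ is a gradient, hence curl-free, while the fields $(0,(\psi_\epsilon^i)_k)$ and $(0,\sigma_\epsilon^i)$ have spatial divergences $\partial_s(\psi_\epsilon^i)_k=(\partial_t\eta_\epsilon^i-g)_k$ and $\partial_s\sigma_\epsilon^i$ that are bounded in $L^2(\mathfrak Q_T^*)$, hence precompact in $H^{-1}_{loc}$. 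The div--curl lemma then gives, in the distributional sense, $\psi_\epsilon^i\cdot\partial_s\eta_\epsilon^i\rightharpoonup\psi^i\cdot\partial_s\eta^i$ and $\sigma_\epsilon^i\partial_s\eta_\epsilon^i\rightharpoonup\sigma^i\partial_s\eta^i$; matching these with the known weak-$*$ limits yields $\sigma^i=\psi^i\cdot\partial_s\eta^i$.

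To finish I would exploit the explicit form \eqref{dff} of $F_\epsilon$. Writing $G_\epsilon(\tau)=\gamma_\epsilon(|\tau|^2)\tau$ and using $\partial_s\eta_\epsilon^i=F_\epsilon(\psi_\epsilon^i)$, a direct computation gives $\psi_\epsilon^i-\sigma_\epsilon^i\partial_s\eta_\epsilon^i=\psi_\epsilon^i(1-|\partial_s\eta_\epsilon^i|^2)$, and, thanks to the $\epsilon$-uniform $L^\infty$ bound on $\psi_\epsilon^i$ (Lemma \ref{lemmakappa}), the right-hand side tends to $0$ uniformly as $\epsilon\to0$ (the worst term is controlled by $\tfrac12\sqrt\epsilon$ via the arithmetic--geometric inequality $\epsilon+r^2\geq 2\sqrt\epsilon\,r$). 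Combined with the div--curl convergence of $\sigma_\epsilon^i\partial_s\eta_\epsilon^i$ this yields $\psi^i=\sigma^i\partial_s\eta^i$, and places $\psi^i$ in $L^\infty(\delta,T;H^1)$. The bound $|\partial_s\eta^i|\le1$ follows from $|F_\epsilon(\psi_\epsilon^i)|<\epsilon|\psi_\epsilon^i|+1$ and weak-$*$ lower semicontinuity, while the degenerate constraint is now purely algebraic: combining $\sigma^i=\psi^i\cdot\partial_s\eta^i$ with $\psi^i=\sigma^i\partial_s\eta^i$ gives $\sigma^i=\sigma^i|\partial_s\eta^i|^2$, i.e.\ $\sigma^i(|\partial_s\eta^i|^2-1)=0$. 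I expect the div--curl identification of the products to be the delicate point, together with the bookkeeping needed to confirm that the relevant space-time fields have $L^2$-bounded divergences uniformly in $\epsilon$.
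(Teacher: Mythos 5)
Your proposal is correct, and it reaches the two key identities $\si=\kki\cdot\ds\ei$ and $\kki=\si\ds\ei$ by a genuinely different mechanism than the paper. The paper proves these relations distributionally by testing against $\phi^i\in L^2\l\delta,T;H^1_0\l\Omega\r\r$, integrating by parts to shift $\ds$ onto $\ei\epp$, and then exploiting the strong (uniform) convergence of $\ei\epp$ in $C\l\overline{\mathfrak Q_T^*}\r$ — a weak-times-strong argument, i.e.\ exactly the compensated structure that you instead package into the two-dimensional space-time div--curl lemma, with $\nabla_{(t,s)}\l\ei\epp\r_k$ curl-free and the fields $\l 0,(\kki\epp)_k\r$, $\l 0,\si\epp\r$ having divergences $\l\dt\ei\epp-g\r_k$ and $\ds\si\epp$ bounded in $L^2\l\mathfrak Q_T^*\r$ (Lemmas \ref{lemmadteta}, \ref{lemmaproduct}, \ref{sigmaspace}), hence $H^{-1}_{loc}$-precompact. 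Both routes hinge on the same crucial quantitative step, which you reproduce correctly: the identity $\kki\epp-\si\epp\ds\ei\epp=\kki\epp\l 1-|\ds\ei\epp|^2\r$ (valid because $\ds\ei\epp=F\epp\l\kki\epp\r$ is parallel to $\kki\epp$) together with the uniform vanishing of $|\kki\epp|\,\bigl||\ds\ei\epp|^2-1\bigr|$, which is precisely the paper's estimate \eqref{eq32}; your AM--GM bound $\ep+r^2\geq 2\sqrt\ep\,r$ gives the same $O(\sqrt\ep)$ control as the paper's explicit computation, both resting on the $\ep$-uniform $L^\infty$ bound of Lemma \ref{lemmakappa}. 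For the junction condition $\su\kki(\cdot,0)=0$ your argument is actually simpler than the paper's: you observe that evaluation at $s=0$ is a fixed element of $\l H^1\l\Omega\r\r^*$, so for $g\in L^1\l\delta,T\r$ the map $t\mapsto g(t)\,\delta_{s=0}$ is an admissible test element and the trace passes through the weak-$*$ limit in $L^{\infty}\l\delta,T;H^{1}\l\Omega\r\r$; the paper instead swaps the roles of $t$ and $s$ and invokes Aubin--Lions--Simon to get strong convergence of the traces in $C\l[0,1];H^{-1}\l\delta,T\r\r$, using \eqref{e:emb2}. What each approach buys: the paper's integration-by-parts proof is elementary and self-contained (no named compensated-compactness tool), while yours is shorter, does not need the strong convergence of $\ei\epp$ for the product identification (only for the boundary conditions on $\eta$), and streamlines the trace argument. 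Your extra observations ($|\ds\ei|\leq 1$ via weak-$*$ lower semicontinuity, and the algebraic derivation $\si\l|\ds\ei|^2-1\r=\kki\cdot\ds\ei-\si=0$) coincide with Proposition \ref{pp15}(i) and the paper's own concluding computation, respectively.
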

\begin{proof}
 The weak compactness results for $\ei\epp$, $\si\epp$ and $\psi\iii\epp$ follow immediately from the estimates above. By the Aubin-Lions-Simon theorem,
 \begin{equation}
 	L^{\infty}\l \delta,T;W^{1,\infty}\l\Omega\r\r\cap W^{1,\infty}\l \delta,T;L^{2}\l \Omega\r\r\subset C\l[\delta,T];C\l\overline{\Omega}\r\r
 \end{equation}
 and the embedding is compact, which implies strong compactness of $\ei\epp$ in $C\l[\delta,T];C\l\overline{\Omega}\r\r$.

 Let us show that \begin{equation}
	\psi\iii=\sigma\iii\ds\eta\iii, \quad
	\sigma\iii=\psi\iii\cdot\ds\eta\iii\label{eq29}
\end{equation}
a.e. in $\mathfrak Q^{*}_{T}$.  Since both sides of the equalities \eqref{eq29} are integrable on $\mathfrak Q^{*}_{T}$, it suffices to prove \eqref{eq29} in the sense of the distributions, i.e., that for any $\phi\iii\in L^{2}\l\delta,T;H^1_{0}\l\Omega\r\r$
\begin{align}
	\su\int_{\mathfrak{Q}_{T}^{*}}\psi\iii\phi\iii dsdt&=-\su\int_{\mathfrak{Q}_{T}^{*}}\sigma\iii\eta\iii\ds\phi\iii dsdt-\su\int_{\mathfrak{Q}_{T}^{*}}\ds\sigma\iii\eta\iii\phi\iii dsdt\label{eq30}\\
	\su\int_{\mathfrak{Q}_{T}^{*}}\sigma\iii\phi\iii dsdt&=-\su\int_{\mathfrak{Q}_{T}^{*}}\psi\iii\cdot \eta\iii\ds\phi\iii dsdt-\su\int_{\mathfrak{Q}_{T}^{*}}\ds\psi\iii\cdot\eta\iii\phi\iii dsdt.\label{eq31}
\end{align}
Firstly, applying integration by parts to the equality $\sigma\epp\iii=\psi\epp\iii\cdot\ds\eta\epp\iii$ we obtain $$\su\int_{\mathfrak{Q}_{T}^{*}}\sigma\iii\epp\phi\iii=-\su\int_{\mathfrak{Q}_{T}^{*}}\psi\iii\epp\cdot\eta\iii\epp\ds\phi\iii dsdt-\su\int_{\mathfrak{Q}_{T}^{*}}\ds\psi\iii\epp\cdot\eta\iii\epp\phi\iii dsdt,$$
and due to the strong compactness property of $\{ \eta\epp\}$ given above we can pass to the limit to get \eqref{eq31}.  We now claim
\begin{equation}\label{eq32}
	\lim_{\ep\rightarrow 0}\left| \psi\iii\epp\right| \left| \left| \ds\eta\epp\iii\right|^2-1\right| =0
\end{equation} uniformly in $\mathfrak Q^{*}_{T}$. 
Before proving the claim we show how \eqref{eq30} follows from \eqref{eq32}. Indeed,  with \eqref{eq32} in our hand and noting that $\psi\iii\epp|\ds\eta\epp\iii|^2=\l\psi\epp\iii\cdot\ds\eta\epp\iii\r\ds\eta\epp\iii=\sigma\iii\epp\ds\eta\iii\epp$ we have for each $i=1,2,3$
\begin{equation}\label{eq33}
	\lim_{\ep\rightarrow 0}||\sigma\epp\iii\ds\eta\iii\epp-\psi\iii\epp||_{L^{\infty}\l \mathfrak Q^{*}_{T}\r}=0.
\end{equation}
In particular, for any $\phi\iii\in L^2\l\delta,T;H^1_{0}\l\Omega\r \r$
$$\lim_{\ep\rightarrow 0}\su\int_{\mathfrak{Q}_{T}^{*}}\psi\iii\epp\phi\iii dsdt=\lim_{\ep\rightarrow 0}\su\int_{\mathfrak{Q}_{T}^{*}}\sigma\epp\iii\ds\eta\epp\iii\phi\iii dsdt.$$
An integration by parts applied to the integral on the right-hand side gives $$\lim_{\ep\rightarrow 0}\su\int_{\mathfrak{Q}_{T}^{*}}\psi\epp\iii\phi\iii dsdt=\lim_{\epsilon\rightarrow 0}\su\int_{\mathfrak{Q}_{T}^{*}}\l-\sigma\epp\iii\eta\epp\iii\ds\phi\epp\iii-\ds\sigma\epp\iii\eta\iii\epp\phi\iii\r dsdt.$$ This together with the compactness properties established above yields \eqref{eq30}.

We now provide a proof of \eqref{eq32}. By the definition of $F\epp$ in \eqref{dff},
\begin{align*}
	|\ds\eta\iii\epp|-1&=\left|F\epp\l\psi\iii\epp\r\right|-1\\&=\ep|\psi\epp\iii|+\frac{|\psi\iii\epp|}{\sqrt{\ep+|\psi\epp\iii|^2}}-1\\&=\ep|\psi\epp\iii|-\frac{\ep}{\sqrt{\ep+|\psi\iii\epp|^2}\l\sqrt{\ep+|\psi\iii\epp|^2}+|\psi\iii\epp|\r}.
\end{align*}
Thus,
\begin{align*}
	|\psi\iii\epp|\left||\ds\eta\iii\epp|^2-1 \right|&=\left||\ds\eta\iii\epp|+1 \right|\left|\ep|\psi\epp\iii|^2-\frac{\ep|\psi\iii\epp|}{\sqrt{\ep+|\psi\iii\epp|^2}\l\sqrt{\ep+|\psi\iii\epp|^2}+|\psi\iii\epp|\r} \right|\\&\leq\left||\ds\eta\iii\epp|+1 \right|\left(\ep|\psi\epp\iii|^2+\sqrt{\epsilon}\right).
\end{align*}
This together with uniform $L^\infty$ bounds on $\ds\ei\epp$ and $\psi\epp \iii$ yields
 \eqref{eq32}.

Passing to the limit in $L^2\l \mathfrak{Q}_{T}^{*}\r$ in  $\dt\eta\iii\epp=\ds\psi\iii\epp+g$ and using \eqref{eq29}, we obtain $\dt\eta\iii=\ds\l\sigma\iii\ds\eta\iii\r+g$. Moreover, by \eqref{eq29}, $$\sigma\iii\l|\ds\eta\iii|^2-1\r= \psi\iii\cdot \ds\eta\iii-\sigma\iii =0.$$  

 Due to the strong uniform convergence of $\eta^i\epp$, 
we have $\alpha\iii\l 1\r=\eta\epp\iii\l t,1 \r\rightarrow\eta\iii\l t,1 \r$, whence $\eta\iii\l t,1 \r=\alpha\iii\l 1\r$ for all $t\in[\delta,T]$.  The condition $\e\epp\l t,0\r=\ee\epp\l t,0\r=\eee\epp\l t,0\r$ similarly passes to the limit. To check the validity of the boundary condition at $s=0$ for $\psi$, we swap the variables $t$ and $s$, noting that $\psi^i \epp$ are uniformly bounded and weakly-* converging in $H^1(0,1; L^\infty(\delta,T))$. Employing, for instance, \cite[Corollary 2.2.1]{ZV08}, we get \begin{equation}  
	\label{e:emb2} H^1(0,1; L^\infty(\delta,T))= AC^2([0,1]; L^\infty(\delta,T)).\end{equation} 
Hence, by the Aubin-Lions-Simon theorem, the embedding \begin{equation*}   H^1(0,1; L^\infty(\delta,T))\subset C([0,1]; H^{-1}(\delta,T))\end{equation*} 
is compact, whence we may assume that $\psi^i \epp\to \psi\iii$ strongly in $C([0,1]; H^{-1}(\delta,T))$. Thus, $$0=\su \psi\epp\iii(\cdot,0)\to\su \psi\iii(\cdot,0)$$ in $H^{-1}(\delta,T)$.  Due to \eqref{e:emb2},  $\su \psi\iii(\cdot,0)=0$ in $L^\infty(\delta,T)$. 
\end{proof}

\begin{rem}[Initial conditions] \label{imnit} By the Aubin-Lions-Simon theorem, the embedding \begin{equation*}   H^1(0,T; L^2(\Omega))\subset C([0,T]; H^{-1}(\Omega))\end{equation*}  is compact. 
	Since $\eta\epp\iii$ (w.l.o.g.) converge weakly in $H^1(0,T; L^2(\Omega))$ we can pass to the limit in the initial conditions to obtain 	$\ei\left(0,\cdot\right)=\alpha^{i}$ in $H^{-1}(\Omega)$. However, since $H^1(0,T; L^2(\Omega))=AC^2(0,T; L^2(\Omega))$, the initial conditions actually hold in $L^2(\Omega)$.  \end{rem} 

\begin{prop}\label{pp15}
	Let $\left(\ei,\si\right)$ be the limiting solution obtained in Proposition \ref{ppmain}. Then  \begin{enumerate}[(i)]
		\item $|\ds\ei\left(t,s\right)|\leq 1 $ for a.e. $\left(t,s\right)\in\mathfrak Q^*_{T}$;
		\item $\si\geq 0$ for a.e. $\left(t,s\right)\in \mathfrak Q^*_{T}$;
		\item \eqref{energydiss} holds for a.a. $t\in (\delta, T)$. 
	\end{enumerate}
\end{prop}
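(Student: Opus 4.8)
The plan is to obtain all three properties by passing to the limit $\ep\to0$ along the subsequence produced in Proposition~\ref{ppmain}, combining the uniform bounds of Section~\ref{s:sec4} with the lower semicontinuity of the relevant functionals under weak convergence. Parts (i) and (ii) are soft; the substance of the statement is the energy dissipation inequality (iii).

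For (i) I would first record a sharp pointwise bound on the approximate gradients. Since $\ds\ei\epp=F\epp\l\psi\iii\epp\r$ and $|F\epp\l\psi\r|=\ep|\psi|+|\psi|/\sqrt{\ep+|\psi|^2}$, the uniform $L^\infty$ bound of Lemma~\ref{lemmakappa} gives
\[
|\ds\ei\epp|\leq \ep\,\|\psi\iii\epp\|_{L^\infty\l\mathfrak Q^*_T\r}+1\leq 1+C\ep\quad\text{on }\mathfrak Q^*_T,
\]
so that $\limsup_{\ep\to0}\|\ds\ei\epp\|_{L^\infty\l\mathfrak Q^*_T\r}\leq1$. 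As $\ds\ei\epp\to\ds\ei$ weakly-$*$ in $L^\infty\l\mathfrak Q^*_T\r$ (a consequence of the weak-$*$ convergence in $L^\infty(\delta,T;W^{1,\infty}(\Omega))$ from Proposition~\ref{ppmain}), testing against an arbitrary $\phi\in L^1\l\mathfrak Q^*_T;\rm^d\r$ and invoking $L^1$--$L^\infty$ duality yields $\|\ds\ei\|_{L^\infty\l\mathfrak Q^*_T\r}\leq1$, which is exactly (i). Property (ii) is then immediate: each $\si\epp\geq0$ by \eqref{sinon}, and $\si\epp\to\si$ weakly in $L^2\l\mathfrak Q^*_T\r$ (from the weak-$*$ convergence in $L^\infty(\delta,T;H^1(\Omega))$ of Proposition~\ref{ppmain}); since the cone of a.e.\ nonnegative functions is convex and strongly closed, it is weakly closed, whence $\si\geq0$ a.e.

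The real work is (iii). I would start from the exact energy identity \eqref{forremarks} of the approximating flow, evaluated at two arbitrary times $\delta\leq t_1<t_2\leq T$ and subtracted; writing out $\bene\epp$ and using $\ei\epp(t_2)-\ei\epp(t_1)=\int_{t_1}^{t_2}\dt\ei\epp\,dt$ to recast the gravity contribution, this becomes
\[
\su\int_{t_1}^{t_2}\io|\dt\ei\epp|^2\,ds\,dt+\su\io\l Q\epp\l\ds\ei\epp(t_2)\r-Q\epp\l\ds\ei\epp(t_1)\r\r ds=\su\int_{t_1}^{t_2}\io g\cdot\dt\ei\epp\,ds\,dt.
\]
The crucial point is that the stored-energy density disappears uniformly in the limit: from the explicit formula \eqref{gdef} and $\|\psi\iii\epp\|_{L^\infty\l\mathfrak Q^*_T\r}\leq C$ one checks $0\leq Q\epp\l\ds\ei\epp\r\leq\frac12\ep|\psi\iii\epp|^2+\sqrt\ep\leq C\ep+\sqrt\ep\to0$ on $\mathfrak Q^*_T$, so the middle term vanishes. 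Letting $\ep\to0$, I use weak lower semicontinuity of the $L^2$-norm for $\dt\ei\epp\rightharpoonup\dt\ei$ (Proposition~\ref{ppmain}) on the left and plain weak convergence against the fixed $g$ on the right, obtaining
\[
\su\int_{t_1}^{t_2}\io|\dt\ei|^2\,ds\,dt\leq\su\int_{t_1}^{t_2}\io g\cdot\dt\ei\,ds\,dt
\]
for all such $t_1<t_2$. Dividing by $t_2-t_1$ and applying the Lebesgue differentiation theorem then yields \eqref{energydiss} for a.e.\ $t\in(\delta,T)$.

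The main obstacle is precisely this last part. One must confirm that the approximate potential $Q\epp(\ds\ei\epp)$ genuinely vanishes in the limit, which is exactly where the uniform tension bound of Lemma~\ref{lemmakappa} is indispensable; and one must settle for an \emph{inequality} rather than the equality \eqref{forremarks}, since the dissipation term is only weakly lower semicontinuous and no strong convergence of $\dt\ei\epp$ is available. Everything else reduces to careful bookkeeping with the weak and weak-$*$ limits already secured in Proposition~\ref{ppmain}.
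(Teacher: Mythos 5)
Your proof is correct and follows essentially the same route as the paper, whose own proof is omitted with a pointer to \cite[Proposition 3.6 and Theorem 3]{shidim1}: (i) from $\partial_s\eta^i_\epsilon=F_\epsilon(\psi^i_\epsilon)$ and the uniform tension bound of Lemma \ref{lemmakappa} giving $|\partial_s\eta^i_\epsilon|\le 1+C\epsilon$ plus weak-$*$ lower semicontinuity, (ii) from \eqref{sinon} and weak closedness of the nonnegative cone, and (iii) by passing to the limit in the approximate energy identity \eqref{forremarks} after noting via \eqref{gdef} that $Q_\epsilon(\partial_s\eta^i_\epsilon)\to 0$ uniformly on $\mathfrak Q^{*}_{T}$, with weak lower semicontinuity of the dissipation term turning the identity into the inequality \eqref{energydiss}. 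The only bookkeeping remark is that the $L^{\infty}(\delta,T;L^{\infty}(\Omega))$ bound on $\psi^i_\epsilon$ justifies evaluating the stored-energy terms at almost every (rather than every) pair $t_1<t_2$, which is all that your final Lebesgue differentiation step requires.
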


We omit the proof since it follows the same lines as the proofs of \cite[Proposition 3.6 and Theorem 3]{shidim1}.

Employing a diagonal argument and taking into account Proposition \ref{pp15} and Remark \ref{imnit}, it is easy to deduce Theorem \ref{thm17} from Proposition \ref{ppmain}.

\begin{rem}[Single cord with two fixed ends] \label{remst} The results of the paper, mutatis mutandis, are valid for the overdamped fall of a single inextensible string with the ends fixed at two distinct spatial points (it suffices to observe that such a string can be viewed as a degenerate ``triod'' with one arm having zero length); remember that \cite{shidim1} studied the case of one free and one fixed end (i.e., a ``whip''). More precisely, we have the following result. \end{rem}

\begin{prop}\label{corst}
	Given $\alpha\l s\r\in W^{1,\infty}\l\Omega\r^d$ satisfying $|\alpha(0)-\alpha(1)|<1$, $|\ds \alpha(s)|=1$ a.e. in $\Omega$, there exists a generalized solution to \begin{equation}\label{nsingle}
		\begin{cases}
			\d_{t}\eta=\d_{s}\left(\sigma\d_{s}\eta\right)+g,\\
			|\ds\eta|=1,\\
			\eta\l t,0\r=\alpha\l 0\r,\quad \eta\l t,1\r=\alpha\l 1\r,\\
			\eta\l 0,s\r=\alpha\l s\r.
		\end{cases} 
	\end{equation}  in $\mathfrak{Q}_{\infty}$. Moreover, $\sigma\l t,s\r\geq 0 $ for almost every $\l t,s\r\in \mathfrak{Q}_{\infty}$.
\end{prop}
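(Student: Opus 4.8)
The plan is to repeat the whole scheme of Sections \ref{s:app}--\ref{s:sec5} \emph{mutatis mutandis}, the point being that the single cord with two fixed ends is actually easier than the triod since there is no moving junction. The notion of generalized solution is the obvious analogue of Definition \ref{def1}: one keeps \eqref{eq35}, \eqref{eq36}, \eqref{eq37}, the initial datum, the two Dirichlet conditions $\eta\l t,0\r=\alpha\l 0\r$, $\eta\l t,1\r=\alpha\l 1\r$, and the energy dissipation inequality, while simply dropping the junction condition $\su\sigma^i\l t,0\r\ds\eta^i\l t,0\r=0$. First I would introduce the approximating problem
\begin{equation*}
\dt\eta\epp=\ds\l G\epp\l\ds\eta\epp\r\r+g \quad \text{in } \mathfrak Q_T,
\end{equation*}
now subject to $\eta\epp\l t,0\r=\alpha\l 0\r$, $\eta\epp\l t,1\r=\alpha\l 1\r$ and $\eta\epp\l 0,s\r=\alpha\l s\r$. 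Its solvability follows exactly as in Section \ref{evolsec}: after subtracting $\alpha$ one works in $V=\{u\in AC^2\l\overline\Omega;\rm^d\r:\ u\l 0\r=u\l 1\r=0\}$, and the operator defined by $\langle \bm A\l\xi\r,\zeta\rangle=\ii G\epp\l\ds\l\xi+\alpha\r\r\cdot\ds\zeta\,ds$ is bounded, radially continuous, monotone and semicoercive by verbatim repetitions of Lemma \ref{mainineqaulity} and its corollaries, so Theorem \ref{rubicekthm} yields an approximate solution in the regularity class of Corollary \ref{cor:sol}.

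Next I would reproduce the uniform (in $\ep$) bounds of Section \ref{s:sec4}, setting $\psi\epp:=G\epp\l\ds\eta\epp\r$ and $\sigma\epp:=\psi\epp\cdot\ds\eta\epp\ge 0$. The energy identity of Lemma \ref{pp1} carries over and is in fact cleaner: in the integration by parts both endpoint contributions $G\epp\l\ds\eta\epp\r\cdot\dt\eta\epp$ vanish immediately because $\dt\eta\epp=0$ at $s=0$ and at $s=1$ (both ends are fixed), so no junction cancellation is needed. Consequently the bounds of Lemmas \ref{lemmadteta} and \ref{lemmaproduct} on $\dt\eta\epp$, on $\ds\psi\epp$, and on $|\psi\epp|\,|\dss\eta\epp-\ep\ds\psi\epp|$ hold unchanged.

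The only step requiring a genuinely different argument is the crucial $L^\infty$ bound for the tension, i.e.\ the analogue of Lemma \ref{lemmakappa}; here the whole junction analysis (Steps 2--5 together with Lemma \ref{lemmatre}) collapses to one simpler observation. Arguing by contradiction along a sequence with $\|\psi_{\ep^n}\|_{L^\infty\l\delta,T;L^\infty\l\Omega\r\r}\to+\infty$, Step 1 still produces a time $t^n$ with $|\psi_{\ep^n}\l t^n,\cdot\r|\to+\infty$ uniformly in $s$; then, as in Step 3, $|\dss\eta_{\ep^n}\l t^n,\cdot\r|\to 0$ in $L^2\l\Omega\r$, while Remark \ref{rem0} gives $|\ds\eta_{\ep^n}\l t^n,s\r|\ge 1$ for every $s$. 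Since $\ds\eta_{\ep^n}\l t^n,\cdot\r$ is then uniformly close to the constant vector $v^n:=\ds\eta_{\ep^n}\l t^n,0\r$ with $|v^n|\ge 1$, I would integrate to obtain
\begin{equation*}
\alpha\l 1\r-\alpha\l 0\r=\eta_{\ep^n}\l t^n,1\r-\eta_{\ep^n}\l t^n,0\r=\ii \ds\eta_{\ep^n}\l t^n,s\r\,ds=v^n+o(1),
\end{equation*}
so that $|\alpha\l 0\r-\alpha\l 1\r|\ge 1$ in the limit, contradicting the hypothesis $|\alpha\l 0\r-\alpha\l 1\r|<1$. This gives the uniform $W^{1,\infty}$ bound on $\eta\epp$ exactly as in Corollary \ref{lemmakappacor}, and then the uniform $L^\infty\l\delta,T;H^1\l\Omega\r\r$ bound on $\sigma\epp$ follows verbatim as in Lemma \ref{sigmaspace}.

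Finally I would pass to the limit as in Section \ref{s:sec5}: the compactness and identification arguments of Proposition \ref{ppmain} (the relation $\psi=\sigma\ds\eta$, the limiting equation $\dt\eta=\ds\l\sigma\ds\eta\r+g$, and $\sigma\l|\ds\eta|^2-1\r=0$) go through, with the two Dirichlet conditions passing to the limit by the uniform convergence of $\eta\epp$; the sign $\sigma\ge 0$, the inequality $|\ds\eta|\le 1$ and the energy dissipation (the analogue of \eqref{energydiss}) follow as in Proposition \ref{pp15}, and the initial datum is recovered as in Remark \ref{imnit}. A diagonal argument letting $T\to\infty$ then yields a generalized solution on $\mathfrak Q_\infty$. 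I expect the only delicate point to be the tension bound of the third paragraph, but as indicated it is strictly simpler here than in the triod case, so no serious obstacle remains.
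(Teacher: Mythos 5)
Your proposal is correct, but it proves the proposition by a different route than the paper. The paper's entire argument is the one-line observation in Remark \ref{remst}: a string with two fixed ends is regarded as a degenerate triod with one arm of zero length, so that the proof of Theorem \ref{thm17} applies \emph{mutatis mutandis}; the hypothesis $|\alpha(0)-\alpha(1)|<1$ then plays the role of the non-straightness condition $|\alpha^i(1)|<1$ of Remark \ref{remofradius} (equivalently, of the enclosing-ball condition feeding into Lemma \ref{lemmatre}). You instead re-run the whole machinery of Sections \ref{s:app}--\ref{s:sec5} directly for the single cord, and the one place where this is not verbatim --- the $L^\infty$ tension bound of Lemma \ref{lemmakappa} --- you replace correctly: with both ends Dirichlet there is no junction, blow-up of $\psi_{\ep^n}(t^n,\cdot)$ is uniform along the whole string, and your integration $\alpha(1)-\alpha(0)=\ii\ds\eta_{\ep^n}\l t^n,s\r ds=v^n+o(1)$ with $|v^n|\ge 1$ contradicts $|\alpha(0)-\alpha(1)|<1$; this is exactly the one-arm specialization of Step 4 of the paper's proof (note $p^n-\alpha(1)=\alpha(0)+v^n-\alpha(1)+o(1)$ there), with Lemma \ref{lemmatre} and Steps 2 and 5 rendered unnecessary, as you observe. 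What each approach buys: the paper's degeneration trick is maximally economical but leaves to the reader the formal mismatch that a zero-length arm does not literally satisfy the standing hypotheses (unit arm lengths, $|\alpha^i(1)|<1$), whereas your direct proof is self-contained, avoids any degenerate-triod bookkeeping, and makes transparent precisely where and how the hypothesis $|\alpha(0)-\alpha(1)|<1$ enters. Your remaining claims (solvability via Theorem \ref{rubicekthm} on $V=\{u\in AC^2\l\overline\Omega;\rm^d\r:u(0)=u(1)=0\}$, the cleaner energy identity with both boundary terms vanishing since $\dt\eta\epp=0$ at $s=0,1$, and the limit passage without the junction condition) are all sound simplifications of the corresponding steps.
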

\begin{rem}[Exponential decay] \label{expd} Employing energy methods, the authors of \cite{shidim1} established exponential decay of the generalized solution towards the equilibrium state (the whip hanging downwards) in the case of one free and one fixed end. The proof was heavily relying on Hardy's inequality. That strategy cannot be easily adapted even to the string with two fixed ends (the corresponding steady state is the catenary) since that would require to substitute Hardy's inequality with a troublesome non-standard variant of the Poincar\'e inequality. Moreover, to the best of our knowledge, the explicit characterization of the steady state with the least potential energy is not available for the triod. \end{rem}

\subsection*{Acknowledgment}
AT and DV are partially supported by CMUC-UID/MAT/00324/2020, funded by the Portuguese Government through FCT/MCTES and co-funded by the ERDF through the Partnership Agreement PT2020. AT is partially supported by the FCT grant PD/BD/150352/2019.

\def\cprime{$'$}

\appendix
\label{apa1}

\section{The gradient of the potential energy}
Here we present a formal computation of $-\nabla_{\mathcal A} E$. Fix a reference network $\eta\in \mathcal A$.  Firstly, it is clear that $-\nabla_{L^{2}} E\l\eta\r=g$. Hence, by some basic Riemannian geometry \cite{docarmo},
\begin{equation}\label{projectionequation}
-\nabla_{\mathcal A} E\l\eta\r=P_{\eta}g,
\end{equation}
where $P_{\eta}g$ is the orthogonal projection of $g$ onto the tangent space $T_{\eta}\mathcal A$ that was defined in \eqref{tangentspace}. Assume that the following system of ODE
\begin{equation} \d_{ss}\sigma^{i}-|\d_{ss}\ei|\sigma^{i}=0\end{equation} with the initial/terminal conditions
\begin{align}
\su \sigma^{i}\ds\ei=0~\text{at}~s=0,\label{eqpoint}\\
\ds\sigma^{i}\ds\ei+\sigma^{i}\d_{ss}\ei+g ~\text{does not depend on}~i~\text{at}~s=0,\label{eq52a}\\
\ds\sigma^{i}\ds\ei+\sigma^{i}\d_{ss}\ei=-g~\text{at}~s=1\label{eq15},
\end{align}
is solvable for $\sigma\iii$, $i=1,2,3$. We claim that 
\begin{equation}
P_{\eta} g:=\l g+\ds\l\sigma^{1}\ds\e\r,g+\ds\l\sigma^{2}\ds\ee\r,g+\ds\l\sigma^{3}\ds\eee\r\r
\end{equation}
fulfills the relevant conditions for the image of $g$ under the orthogonal projection, namely, $P_{\eta}g\in T_{\eta} g$ and $g-P_{\eta}g$ is $L^{2}$-orthogonal to any $\upsilon^{i}\in T_{\eta}\mathcal A$. Indeed, differentiating the constraints $|\ds\ei|^2=1$ we find
\begin{align*}
\ds\ei\cdot\dss\ei=0,\\
\ds\ei\cdot\d_{sss}\ei=-|\dss\ei|^2.
\end{align*} 
Hence, $$\ds\l P_{\eta} g\r\cdot\ds\eta=\dss\sigma^{i}-|\dss\ei|^2\sigma^{i}=0.$$
It is easy to see that we have $P_{\eta}g\l 1\r=0$ by \eqref{eq15} componentwise. Moreover, by \eqref{eq52a}, $(P_{\eta}g)^{i}\l 0\r$ does not depend on $i$.  We have proved that  $P_{\eta}g\in T_{\eta}\mathcal A$. Finally, for any $\upsilon^{i}\in T_{\eta}\mathcal A$, we obtain after summing the terms and integration by parts
\begin{align*}
\su\ii \l g-\l P_{\eta}g\r\iii\r\cdot \upsilon^{i}ds&=\su\ii-\ds\l\sigma\iii\ds\ei\r\cdot\upsilon\iii ds\\ 
&=\su\ii\sigma\iii\ds\ei\cdot\ds\upsilon\iii ds-\su\sigma\iii\ds\ei\cdot\upsilon\iii\Big|_{s=0}^{s=1}.
\end{align*} 
It follows from the definition of $T_{\eta}\mathcal A$ in \eqref{tangentspace} and equality \eqref{eqpoint} that both terms vanish.

This formally justifies that the PDE form of the gradient flow \eqref{gradequality} is  \eqref{nwgrd}, \eqref{boundary}. A similar but slightly amended argument formally implies that \eqref{newton} is equivalent to \eqref{triodeq}, \eqref{boundary}. 
\vspace{1cm}
\end{document}